\newcommand{\precdot}{\prec\mathrel{\mkern-5mu}\mathrel{\cdot}}
\newtheorem{theorem}{Theorem}[section]
\newtheorem{lemma}[theorem]{Lemma}
\newtheorem{corollary}[theorem]{Corollary}
\newtheorem{conj}[theorem]{Conjecture}
\newtheorem{definition}[theorem]{Definition}
\newtheorem{example}[theorem]{Example}
\newtheorem{prop}[theorem]{Proposition}
\newtheorem*{theorem*}{Theorem}
\newtheorem{remark}[theorem]{Remark}
\newcommand{\col}[3][]{\text{Col}^{#1}(#2;#3)}
\newcommand{\ldownarrow}{\Big\downarrow}
\newcommandx{\nick}[2][1=]{\todo[linecolor=blue,backgroundcolor=blue!25,bordercolor=blue,#1]{#2 ---Nick}}
\newcommandx{\laura}[2][1=]{\todo[linecolor=orange,backgroundcolor=orange!25,bordercolor=orange,#1]{#2 ---Laura}}
\newcommandx{\felix}[2][1=]{\todo[linecolor=purple,backgroundcolor=purple!25,bordercolor=purple,#1]{#2 ---Felix}}
\newcommandx{\etienne}[2][1=]{\todo[linecolor=red,backgroundcolor=red!25,bordercolor=red,#1]{#2 ---Etienne}}
\begin{document}

\title{On ranked and bounded Kohnert posets}

\author[*]{Laura Colmenarejo}
\author[*]{Felix Hutchins}
\author[*]{Nicholas Mayers}
\author[*]{Etienne Phillips}

\affil[*]{Department of Mathematics, North Carolina State University, Raleigh, NC, 27605}

\maketitle

\bigskip
\begin{abstract} 
\noindent
In this paper, we explore combinatorial properties of the posets associated with Kohnert polynomials. In particular, we determine a sufficient condition guaranteeing when such ``Kohnert posets'' are bounded and two necessary conditions for when they are ranked. Moreover, we apply the aforementioned conditions to find complete characterizations of when Kohnert posets are bounded and when they are ranked in special cases, including those associated with Demazure characters.
\end{abstract}

\section{Introduction}

In his thesis \cite{Kohnert}, Kohnert showed that Demazure characters can be viewed as generating functions of certain diagrams of unit cells distributed in the first quadrant. Moreover, it was conjectured, and subsequently proven \cite{AssafSchu, Winkel2, Winkel1}, that a similar result applied to Schubert polynomials. Motivated by this, Assaf and Searles \cite{KP1} generalized the aforementioned construction and defined the notion of ``Kohnert polynomials''. Such polynomials are generating functions of general configurations of cells in the first quadrant which are related to a ``seed'' diagram by a sequence of moves, called ``Kohnert moves''. The effect of applying a Kohnert move is that at most one rightmost cell of a row in the given diagram moves to the first empty position below in the same column (see Section~\ref{sec:KDprelim}). In particular, starting from a diagram $D$, one forms a collection of diagrams $KD(D)$ by applying sequences of Kohnert moves to $D$. Each diagram in $KD(D)$ corresponds to a monomial by encoding the number of cells in each row. The Kohnert polynomial associated with a diagram $D$ is then the generating function for the diagrams of $KD(D)$.

In \cite{KP2}, the author notes that for a diagram $D$ one can also naturally define a poset structure on the collection of diagrams $KD(D)$, which we refer to as its ``Kohnert poset'' (see also \cite{KP3}). Moreover, the author illustrates that Kohnert posets are not usually ``well-behaved''. For instance, in general, they are not bounded\footnote{In \cite{KP2}, the author noted that in general Kohnert posets do not have a unique minimal element. As Kohnert posets always have a unique maximal element, having a unique minimal element is equivalent to being bounded.} nor ranked. The goal of this paper is to initiate a more in-depth investigation into Kohnert posets, with a focus of identifying such posets with those two properties. Kohnert posets are hard to consider in a general form and their ``bad behavior'' drives us to focus on a few families of diagrams. Undertaking a systematic approach, we initially study families of diagrams where the number of cells allowed in each column of a diagram increases from one family to the next. However, the study becomes challenging already for the case of diagrams with cells only in two rows. Consequently, we move to considering families of diagrams for which the arrangements of cells in nonempty rows is more structured. Among the families of diagrams considered in this last part of our study are ``key diagrams'' whose Kohnert polynomials are Demazure characters.

The outline of the paper is as follows. In Section~\ref{sec:prelims} we introduce Kohnert diagrams and present a brief introduction to poset theory with the concepts and results needed for our study. Also included in Section~\ref{sec:prelims} is a subsection devoted to Kohnert posets in which, along with their definition, we facilitate the discussion by introducing some language and notation. In Section~\ref{sec:sres} we establish several general results presenting conditions for a diagram to be associated with a Kohnert poset which is either bounded or ranked. More concretely, in Proposition~\ref{prop:nummingen} we find that a sufficient condition for a diagram to generate a bounded Kohnert poset can be given in terms of the sequence of the number of cells per nonempty column; and in Theorems~\ref{thm:ranked1} and~\ref{thm:ranked2} we find two necessary conditions for a diagram to generate a ranked Kohnert poset in terms of avoiding certain ``subdiagrams''. In the following four sections, we apply the results of Section~\ref{sec:sres} to distinct families of diagrams. Specifically, Section~\ref{sec:obpc} is dedicated to diagrams with at most one cell in each column, Section~\ref{sec:2r} to diagrams with the cells concentrated in two rows, Section~\ref{sec:cd} to key diagrams, and Section~\ref{sec:chd} to checkered diagrams. For each family, we formally define the diagrams and establish characterizations of when the associated Kohnert posets are bounded and when they are ranked. Finally, in the conclusion, we discuss ongoing work as well as open problems.

\vspace{1cm}

\section{Preliminaries}\label{sec:prelims}

In this section, we cover the preliminaries necessary to define Kohnert posets as well as to state our research questions. To start, we introduce the elements of Kohnert posets.

\subsection{Kohnert diagrams}\label{sec:KDprelim}
As mentioned in the introduction, the underlying sets of Kohnert posets are certain collections of diagrams. Formally, a \textbf{diagram} $D$ is an array of finitely many cells in $\mathbb{N}\times\mathbb{N}$ (see Figure~\ref{fig:diagram}). We may also think of a diagram as the set of row-column coordinates of the cells defining it, where rows are labeled from bottom to top and columns from left to right. Therefore, we write $(r,c)\in D$ if the cell in row $r$ and column $c$ is in the diagram $D$. We say that a diagram $D'$ is a \textbf{subdiagram of $D$} if $D'\subseteq D$ as sets of cells. 

Given a diagram $D$, we denote by $|D|$ the total number of cells contained in the diagram and by $rowsum(D)$ the sum of the row coordinates of the cells in the diagram; that is, $$|D| = |\{ (r,c) \ |\ (r,c)\in D\}| \quad \text{ and }\quad rowsum(D) =\sum_{(r,c)\in D}r.$$

\begin{example}
In Figure~\ref{fig:diagram}, we illustrate the diagram $D=\{(1,3),(2,1),(2,2),(3,2)\}$. For this diagram, $|D|=4$ and $rowsum(D)=1+2+2+3=8$.

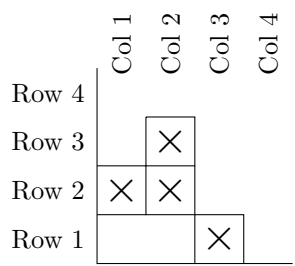
\begin{figure}[H]
    \centering
    $$\begin{tikzpicture}[scale=0.65]
  \node at (1.5, 2.5) {$\bigtimes$};
  \node at (0.5, 1.5) {$\bigtimes$};
  \node at (1.5, 1.5) {$\bigtimes$};
  \node at (2.5, 0.5) {$\bigtimes$};
  \node at (-1,0.5) {Row 1};
  \node at (-1,1.5) {Row 2};
  \node at (-1,2.5) {Row 3};
  \node at (-1,3.5) {Row 4};
  \node at (0.5, 4.5) {\rotatebox{90}{Col 1}};
  \node at (1.5, 4.5) {\rotatebox{90}{Col 2}};
  \node at (2.5, 4.5) {\rotatebox{90}{Col 3}};
  \node at (3.5, 4.5) {\rotatebox{90}{Col 4}};
  \draw (0,4)--(0,0)--(4,0);
  \draw (1,3)--(2,3)--(2,2)--(1,2)--(1,3);
  \draw (2,2)--(2,1)--(1,1)--(1,2)--(0,2);
  \draw (0,1)--(1,1);
  \draw (2,0)--(2,1)--(3,1)--(3,0);
\end{tikzpicture}$$
    \caption{Diagram $D=\{(1,3),(2,1),(2,2),(3,2)\}$}
    \label{fig:diagram}
\end{figure}
\end{example}

\begin{remark}\label{rem:diagconv}
    Ongoing, when illustrating diagrams with a particular form, it will prove helpful to decorate regions to indicate a particular structure. Other than describing the properties of regions in words \textup(usually in parentheses\textup), regions shaded gray represent empty regions containing no cells, and regions shaded with diagonal lines will represent regions that are arbitrary, i.e., the placement of cells can be arbitrary.
\end{remark}

Now, to any diagram $D$ we can apply what are called ``Kohnert moves'' defined as follows. For $r>0$, applying a \textbf{Kohnert move} at row $r$ of $D$ results in moving the rightmost cell in row $r$ to the first \textit{empty} position below in the same column (if such a position exists), jumping over other cells as needed. If applying a Kohnert move at row $r$ causes the cell in position $(r,c)\in D$ to move down to position $(r',c)$, forming the diagram $D'$, then we write $$D'=D\ldownarrow^{(r,c)}_{(r',c)}$$ and refer to this Kohnert move as moving the cell $(r,c)$ to position $(r',c)$. If there is no empty position below the cell $(r,c)$ in the same column, then the Kohnert move does nothing, leaving the diagram fixed. In the case that a Kohnert move does not fix the diagram, we say that it is a \textbf{nontrivial Kohnert move}. We let $KD(D)$ denote the set of all diagrams that can be obtained from $D$ by applying a sequence of Kohnert moves. Sets of the form $KD(D)$ for a diagram $D$ are the underlying sets of Kohnert posets. Note that although some diagrams can be formed in multiple ways from an initial diagram $D$, which will be relevant when discussing poset structure, $KD(D)$ is a set and does not contain repeated elements.

\begin{remark}\label{rem:rempty}
Kohnert moves only affect cells within the same column, which implies that empty columns in a diagram $D$ do not interfere with nor are affected by Kohnert moves. Therefore, empty columns are not relevant to Kohnert moves nor the poset structure introduced later in the paper. Thus, ongoing, we assume that the diagrams do not contain empty columns to the left of nonempty columns. 
\end{remark}

\begin{example}\label{ex:KDD}
In Figure~\ref{fig:KD}, we illustrate the diagrams of $KD(D_0)$ for the diagram $D_0$ of Figure~\ref{fig:diagram}, and included here as the first diagram. The other diagrams in Figure~\ref{fig:KD} are obtained by applying sequences of Kohnert moves to $D_0$. For instance, $D_1$ is obtained by applying a Kohnert move at row 2 of $D_0$, $D_2$ by applying two Kohnert moves at row 2 of $D_0$, $D_3$ by applying a Kohnert move at row 3 of $D_0$, and $D_4$ by applying two Kohnert moves at row 2 followed by another one at row 3 of $D_0$. Note that $D_3$ could also be formed by applying a Kohnert move at row 2 followed by another one at row 3 of $D_0$. 
    \begin{figure}[H]
    \centering
    $$\begin{tikzpicture}[scale=0.65]
  \node at (1.5, 2.5) {$\bigtimes$};
  \node at (0.5, 1.5) {$\bigtimes$};
  \node at (1.5, 1.5) {$\bigtimes$};
  \node at (2.5, 0.5) {$\bigtimes$};
  \node at (2,-0.5) {$D_0$};
  \draw (0,4)--(0,0)--(4,0);
  \draw (1,3)--(2,3)--(2,2)--(1,2)--(1,3);
  \draw (2,2)--(2,1)--(1,1)--(1,2)--(0,2);
  \draw (0,1)--(1,1);
  \draw (2,0)--(2,1)--(3,1)--(3,0);
\end{tikzpicture}\quad\quad\begin{tikzpicture}[scale=0.65]
  \node at (1.5, 2.5) {$\bigtimes$};
  \node at (0.5, 1.5) {$\bigtimes$};
  \node at (1.5, 0.5) {$\bigtimes$};
  \node at (2.5, 0.5) {$\bigtimes$};
    \node at (2,-0.5) {$D_1$};
  \draw (0,4)--(0,0)--(4,0);
  \draw (1,3)--(2,3)--(2,2)--(1,2)--(1,3);
  \draw (0,1)--(1,1);
  \draw (2,0)--(2,1)--(3,1)--(3,0);
  \draw (0,2)--(1,2)--(1,0);
  \draw (1,1)--(2,1);
\end{tikzpicture}\quad\quad\begin{tikzpicture}[scale=0.65]
  \node at (1.5, 2.5) {$\bigtimes$};
  \node at (0.5, 0.5) {$\bigtimes$};
  \node at (1.5, 0.5) {$\bigtimes$};
  \node at (2.5, 0.5) {$\bigtimes$};
    \node at (2,-0.5) {$D_2$};
  \draw (0,4)--(0,0)--(4,0);
  \draw (1,3)--(2,3)--(2,2)--(1,2)--(1,3);
  \draw (0,1)--(1,1);
  \draw (2,0)--(2,1)--(3,1)--(3,0);
  \draw (1,0)--(1,1)--(2,1);
\end{tikzpicture}\quad\quad\begin{tikzpicture}[scale=0.65]
  \node at (1.5, 1.5) {$\bigtimes$};
  \node at (0.5, 1.5) {$\bigtimes$};
  \node at (1.5, 0.5) {$\bigtimes$};
  \node at (2.5, 0.5) {$\bigtimes$};
    \node at (2,-0.5) {$D_3$};
  \draw (0,4)--(0,0)--(4,0);
  \draw (0,1)--(1,1);
  \draw (2,0)--(2,1)--(3,1)--(3,0);
  \draw (0,2)--(1,2)--(1,0);
  \draw (1,1)--(2,1);
  \draw (1,2)--(2,2)--(2,1);
\end{tikzpicture}\quad\quad\begin{tikzpicture}[scale=0.65]
  \node at (1.5, 1.5) {$\bigtimes$};
  \node at (0.5, 0.5) {$\bigtimes$};
  \node at (1.5, 0.5) {$\bigtimes$};
  \node at (2.5, 0.5) {$\bigtimes$};
    \node at (2,-0.5) {$D_4$};
  \draw (0,4)--(0,0)--(4,0);
  \draw (1,2)--(2,2)--(2,1)--(1,1)--(1,2);
  \draw (0,1)--(1,1);
  \draw (2,0)--(2,1)--(3,1)--(3,0);
  \draw (1,0)--(1,1)--(2,1);
\end{tikzpicture}$$
    \caption{The set $KD(D_0)$ for $D_0=\{(1,3),(2,1),(2,2),(3,2)\}$}
    \label{fig:KD}
\end{figure}
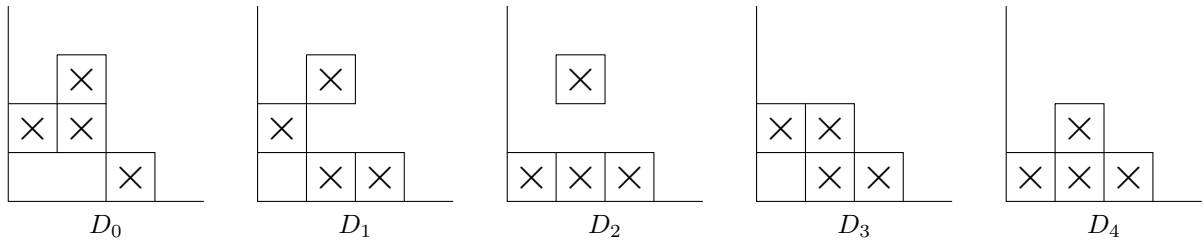
\end{example}

\subsection{Poset theory}\label{subsec:KPos}

In this section, we cover the needed preliminaries from the theory of posets. For more details, see~\cite{EC1}.

A \textbf{finite poset} $(\mathcal{P},\preceq)$ consists of a finite underlying set $\mathcal{P}$ along with a binary relation $\preceq$, also called an \textbf{order}, between the elements of $\mathcal{P}$ which is reflexive, anti-symmetric, and transitive. When no confusion will arise, we simply denote a poset $(\mathcal{P}, \preceq)$ by $\mathcal{P}$, and we let $\le$ denote the natural order on $\mathbb{Z}$. 

Let $\mathcal{P}$ be a finite poset and take $x,y\in\mathcal{P}$. 
\begin{itemize}
    \item We write $x\prec y$ to denote a \textbf{strict relation}; that is, if $x\preceq y$ and $x\neq y$.
    
    \item If $x\preceq y$, we set $[x,y]=\{z\in\mathcal{P}~|~x\preceq z\preceq y\}$ and treat $[x,y]$ as a poset with the order inherited from $\mathcal{P}$; that is, for $z_1,z_2\in [x,y]$, $z_1\prec_{[x,y]}z_2$ if and only if $z_1\prec_{\mathcal{P}}z_2$.
    
    \item We write $x\precdot y$ to denote a \textbf{covering relation}; that is, if $x\prec y$ and there exists no $z\in \mathcal{P}$ satisfying $x\prec z\prec y$.

    \item The \textbf{Hasse diagram} of $\mathcal{P}$ is the graph whose vertices correspond to elements of $\mathcal{P}$ and whose edges correspond to covering relations.

    \item We say $\mathcal{P}$ is \textbf{connected} if the Hasse diagram of $\mathcal{P}$ is a connected graph and say it is \textbf{disconnected} otherwise. 

    \item We say that $x\in\mathcal{P}$ is a \textbf{minimal element} (resp., \textbf{maximal element}) if there exists no $z\in\mathcal{P}$ such that $z\prec x$ (resp., $z\succ x$). If $\mathcal{P}$ has a unique minimal and maximal element, then we say that $\mathcal{P}$ is \textbf{bounded}.

    \item A totally ordered subset $C\subset\mathcal{P}$ is called a \textbf{chain}. The \textbf{length} of a chain is one less than the number of elements that it contains. We refer to a chain $C\subset\mathcal{P}$ as \textbf{maximal} if it is contained in no larger chain of $\mathcal{P}$.

    \item We say $\mathcal{P}$ is \textbf{ranked} if there exists a \textbf{rank function} $\rho:\mathcal{P}\to\mathbb{Z}_{\ge 0}$ such that 
\begin{enumerate}
    \item if $x\prec y$, then $\rho(x)<\rho(y)$, and
    \item if $x$ is covered by $y$, then $\rho(y)=\rho(x)+1$.
\end{enumerate}
    If a poset is ranked, then we can arrange its Hasse diagram into ``levels'' or ``ranks''.
\end{itemize}
In the case that $\mathcal{P}$ is bounded, an alternative definition of being ranked can be given in terms of its chains.
\begin{lemma}\label{lem:bdrank}
    Let $\mathcal{P}$ be a bounded poset. Then $\mathcal{P}$ is ranked if and only if all maximal chains of $\mathcal{P}$ have the same length.
\end{lemma}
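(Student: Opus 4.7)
The plan is to establish both implications separately. For the forward direction, I would assume $\mathcal{P}$ is ranked with rank function $\rho$, and let $\hat{0}$ and $\hat{1}$ denote the unique minimal and maximal elements. Given any maximal chain $\hat{0} = x_0 \prec x_1 \prec \cdots \prec x_n = \hat{1}$, I would first observe that maximality forces each consecutive pair to be a covering relation $x_i \precdot x_{i+1}$: otherwise one could insert some $z$ with $x_i \prec z \prec x_{i+1}$, contradicting that the chain is contained in no larger one. Axiom (2) of a rank function then gives $\rho(x_{i+1}) = \rho(x_i) + 1$ for every $i$, and telescoping yields $n = \rho(\hat{1}) - \rho(\hat{0})$, a quantity independent of the chosen chain.

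For the converse, I would build a candidate rank function. Assuming every maximal chain of $\mathcal{P}$ has the same length, for each $x \in \mathcal{P}$ I set $\rho(x)$ to be the length of \emph{some} maximal chain in the interval $[\hat{0}, x]$, and then verify independence of the choice. The key point is that if $C_1$ and $C_2$ are maximal chains from $\hat{0}$ to $x$ in $\mathcal{P}$, and $C'$ is any maximal chain from $x$ to $\hat{1}$, then $C_1 \cup C'$ and $C_2 \cup C'$ are maximal chains of $\mathcal{P}$ (argument sketched below). By hypothesis both have the same length, forcing $|C_1| = |C_2|$, so $\rho$ is well-defined. To check the axioms: if $x \prec y$, I would extend a maximal chain from $\hat{0}$ to $x$ by a maximal chain in $[x,y]$, which has length at least one, producing a maximal chain from $\hat{0}$ to $y$ of length strictly greater than $\rho(x)$; and if $x \precdot y$, extending by the single step $x \precdot y$ yields a maximal chain from $\hat{0}$ to $y$ of length exactly $\rho(x) + 1$.

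The main obstacle is making the gluing step rigorous: one must verify that concatenating a maximal chain in $[\hat{0}, x]$ with a maximal chain in $[x, \hat{1}]$ is genuinely a maximal chain in $\mathcal{P}$. This reduces to the observation that for $a, b \in \mathcal{P}$, covering relations $a \precdot b$ coincide with covering relations in any interval $[c,d]$ containing both $a$ and $b$, since any interpolating element $z$ with $a \prec z \prec b$ automatically lies in $[c,d]$. Consequently, the concatenated chain cannot be refined, and maximality is preserved. Once this observation is in hand, both directions go through cleanly.
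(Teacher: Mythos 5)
Your proof is correct. The paper does not actually prove this lemma --- it is stated as a standard background fact from poset theory (cf.\ the reference to Stanley's \emph{Enumerative Combinatorics}) --- so there is no argument of theirs to compare against; your write-up simply supplies the omitted standard proof. The two points that genuinely require care are both handled properly: in the forward direction, boundedness guarantees every maximal chain runs from $\hat{0}$ to $\hat{1}$ and maximality forces consecutive covers, so the rank function telescopes; in the converse, your observation that a covering relation $a\precdot b$ in $\mathcal{P}$ restricts to a covering relation in any interval containing $a$ and $b$ (since an interpolating $z$ with $a\prec z\prec b$ automatically lies in the interval) is exactly what makes the concatenation of a maximal chain of $[\hat{0},x]$ with one of $[x,\hat{1}]$ maximal in $\mathcal{P}$, which yields both the well-definedness of $\rho$ and the two rank axioms.
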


Now, if $\mathcal{P}$ is a ranked poset and $x,y\in \mathcal{P}$ satisfy $x\prec y$, then $[x,y]$ is also a ranked poset. Consequently, considering Lemma~\ref{lem:bdrank}, we get the following.
\begin{lemma}\label{lem:ranksl}
    If $\mathcal{P}$ is ranked and $x,y\in \mathcal{P}$ with $x\prec y$, then all maximal chains of $[x,y]$ have the same length.
\end{lemma}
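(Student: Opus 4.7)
The plan is to deduce Lemma~\ref{lem:ranksl} as a direct corollary of Lemma~\ref{lem:bdrank} by verifying that the interval $[x,y]$ is both bounded and ranked, at which point Lemma~\ref{lem:bdrank} applies verbatim.

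First, I would observe that $[x,y]$ is bounded: by definition of $[x,y]$, every $z\in [x,y]$ satisfies $x\preceq z\preceq y$, so $x$ and $y$ are the unique minimum and maximum elements of $[x,y]$ under the inherited order.

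Next, I would verify that $[x,y]$ is ranked by showing that the restriction $\rho|_{[x,y]}$ of the rank function $\rho$ of $\mathcal{P}$ is a rank function on $[x,y]$. Condition~(1) from the definition of ranked is immediate since the strict order on $[x,y]$ is inherited from $\mathcal{P}$. For condition~(2), the key point to check is that covering relations in $[x,y]$ coincide with covering relations in $\mathcal{P}$ between the same two elements: if $z_1,z_2\in [x,y]$ and $z_1\precdot_{[x,y]} z_2$, then any $w\in\mathcal{P}$ witnessing $z_1\prec_\mathcal{P} w\prec_\mathcal{P} z_2$ would automatically satisfy $x\preceq z_1\prec w\prec z_2\preceq y$, hence $w\in [x,y]$, contradicting the covering relation in $[x,y]$. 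Thus $z_1\precdot_\mathcal{P} z_2$, and condition~(2) for $\rho|_{[x,y]}$ follows from condition~(2) for $\rho$.

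Having established that $[x,y]$ is bounded and ranked, I would apply Lemma~\ref{lem:bdrank} to conclude that all maximal chains of $[x,y]$ have the same length. The only conceptual obstacle is the observation that covering relations are preserved when passing to an interval; this is a standard poset-theoretic fact but it is the lone step that requires any argument, the rest being immediate unpacking of definitions.
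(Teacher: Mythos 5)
Your proposal is correct and follows exactly the route the paper indicates: the paper derives Lemma~\ref{lem:ranksl} by observing that $[x,y]$ is a ranked (and bounded) poset and then invoking Lemma~\ref{lem:bdrank}, which is precisely your argument with the details (in particular, the preservation of covering relations when restricting to an interval) filled in.
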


\begin{example}\label{ex:poset}
    Let $\mathcal{P}=\{1,2,3,4,5\}$ be the poset defined by the relations $1\prec 2\prec 3$ and $2\prec 4 \prec 5$. The Hasse diagram of $\mathcal{P}$ is illustrated in Figure~\ref{fig:Hasse}.

    \begin{figure}[H]
        \centering
        \begin{tikzpicture}
	\node (1) at (0, 0) [circle, draw = black, fill = black, inner sep = 0.5mm, label=left:{$1$}]{};
	\node (2) at (0, 1)[circle, draw = black, fill = black, inner sep = 0.5mm, label=left:{$2$}] {};
	\node (3) at (-0.75, 1.75) [circle, draw = black, fill = black, inner sep = 0.5mm, label=left:{$3$}] {};
	\node (4) at (0.75, 1.75) [circle, draw = black, fill = black, inner sep = 0.5mm, label=right:{$4$}] {};
 \node (5) at (0, 2.5) [circle, draw = black, fill = black, inner sep = 0.5mm, label=left:{$5$}] {};
    \draw (1)--(2);
    \draw (2)--(3);
    \draw (2)--(4);
    \draw (3)--(5)--(4);
\end{tikzpicture}
        \caption{Hasse diagram of $\mathcal{P}$}
        \label{fig:Hasse}
    \end{figure}
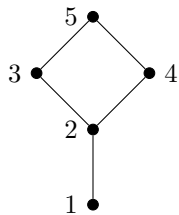
    Note that $\mathcal{P}$ is bounded with 1 and 5 being its unique minimum and maximal elements, respectively.
  Moreover, $\mathcal{P}$ is ranked with rank function $\rho$  given by $\rho(1)=0$, $\rho(2)=1$, $\rho(3)=\rho(4)=2$, and $\rho(5)=3$.
  Finally, $\mathcal{P}$ has exactly two maximal chains, $\{1,2,3,5\}$ and $\{1,2,4,5\}$, both of length 4, and so it also follows from Lemma~\ref{lem:bdrank} that $\mathcal{P}$ is ranked.
\end{example}

\subsection{Kohnert Posets}\label{sec:Kpos}
In this section, we focus on the posets arising from Kohnert diagrams. 

Let $D_0$ be a diagram. The set $KD(D_0)$ has the structure of a poset with the following order. Given $D_1,D_2\in KD(D_0)$, we say $D_2\prec D_1$ if $D_2$ can be obtained from $D_1$ by applying some sequence of Kohnert moves (see \cite{KP3,KP2}). We denote the corresponding poset on $KD(D_0)$ by $\mathcal{P}(D_0)$ and refer to it as the \textbf{Kohnert poset} associated with $D_0$. Moreover, we let $Min(D_0)$ denote the collection of minimal elements of $\mathcal{P}(D_0)$, which are exactly those diagrams that are fixed by all Kohnert moves.

\begin{remark}\label{rem:cover}
    Note that the cover relation $D_2\precdot D_1$ implies that $D_2$ can only be obtained from $D_1$ by applying a single nontrivial Kohnert move. However, the converse is not necessarily true; that is, if $D_2$ is obtained from $D_1$ by applying a single nontrivial Kohnert move, we cannot conclude that $D_2\precdot D_1$ since there could be another way to obtain $D_2$ from $D_1$ by applying more than one nontrivial Kohnert move. For example, let $D_0$ be the diagram of Figure~\ref{fig:diagram} and $D_3$ the diagram obtained from $D_0$ by applying a single Kohnert move at row 3. Then $D_3\prec D_0$ is not a cover relation as can be seen in Example~\ref{ex:KDD} where it is shown that $D_3$ can be formed from $D_0$ by applying two nontrivial Kohnert moves.
\end{remark}

For the purpose of defining rank functions on Kohnert posets in the sections that follow, given a diagram $D_0$, we define $$b(D_0)=\min\{rowsum(T)~|~T\in Min(D_0)\}.$$

\begin{remark}
    In Sections~\ref{sec:obpc},~\ref{sec:2r}, and~\ref{sec:cd}, we will see that for certain diagrams $D_0$, the function $rk(D)=rowsum(D)-b(D_0)$ for $D\in KD(D_0)$ defines a rank function on $\mathcal{P}(D_0)$. The purpose of the constant $b(D_0)$ in this formula is that it forces certain minimal elements of $\mathcal{P}(D_0)$ to have a rank of 0 and all other diagrams to have positive rank.
\end{remark}

\section{General results}\label{sec:sres}

In this section, we establish a sufficient condition for a Kohnert poset to be bounded and two necessary conditions for a Kohnert poset to be ranked.

We start by noticing that since Kohnert posets $\mathcal{P}(D_0)$ have a unique maximal element, i.e., the initial diagram $D_0$, we have the following.

\begin{lemma}\label{lem:bmum}
    Let $D_0$ be a diagram. Then $\mathcal{P}(D_0)$ is bounded if and only if $|Min(D_0)|=1$.
\end{lemma}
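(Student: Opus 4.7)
The plan is to unpack the definition of ``bounded'' and observe that the nontrivial half, namely the existence of a unique maximal element, always holds for Kohnert posets. Recall from Subsection~\ref{subsec:KPos} that $\mathcal{P}(D_0)$ is bounded exactly when it possesses a unique minimal element \emph{and} a unique maximal element. The forward direction is then immediate: if $\mathcal{P}(D_0)$ is bounded, its unique minimum is, in particular, the only element of $Min(D_0)$, so $|Min(D_0)|=1$.

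For the reverse direction, I would argue that $D_0$ itself is always a maximum of $\mathcal{P}(D_0)$, hence the unique maximal element. By definition, every $D\in KD(D_0)$ is obtained from $D_0$ by applying some (possibly empty) sequence of Kohnert moves, so by the definition of the order $\preceq$ given in Subsection~\ref{sec:Kpos}, we have $D\preceq D_0$ for every $D\in KD(D_0)$. Thus $D_0$ is the unique maximal element of $\mathcal{P}(D_0)$. Combining this with the hypothesis $|Min(D_0)|=1$, which provides a unique minimal element, we conclude that $\mathcal{P}(D_0)$ is bounded.

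There is no genuine obstacle here; the content of the lemma is essentially that ``unique maximal element'' is free for Kohnert posets, so boundedness is controlled entirely by the minimal elements. The only thing to be slightly careful about is confirming that the set $KD(D_0)$, as defined, really does consist only of diagrams reachable from $D_0$ by downward moves, so that the relation $D\preceq D_0$ follows directly from the definition of $\preceq$ rather than requiring any additional argument about reversibility of Kohnert moves.
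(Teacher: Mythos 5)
Your proposal is correct and matches the paper's intended argument exactly: the paper states this lemma as an immediate consequence of the observation that $D_0$ is always the unique maximal element of $\mathcal{P}(D_0)$, so boundedness reduces to uniqueness of the minimal element. Nothing further is needed.
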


The following result shows that if the number of cells in the columns of a diagram weakly decreases from left to right, then the associated Kohnert poset is bounded.

\begin{prop}\label{prop:nummingen}
Let $D_0$ be a diagram with $b_i$ cells in column $i$ for $1\leq i \leq n$. If $b_1\ge b_2\ge \cdots\ge b_n$, then $\mathcal{P}(D_0)$ is bounded.
\end{prop}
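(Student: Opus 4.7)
The plan is to reduce the boundedness claim to uniqueness of the minimum via Lemma~\ref{lem:bmum}, so it suffices to prove that $|Min(D_0)|=1$. I would first record the easy but essential invariant that Kohnert moves preserve the number of cells in each column (a move only relocates a single cell vertically within its own column). Consequently, every $D\in KD(D_0)$ has exactly $b_i$ cells in column $i$, and in particular, every $D\in Min(D_0)$ does as well.

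Next, I would identify the natural candidate for the unique minimum, namely the ``flat'' diagram $D^\ast=\{(r,i)\,:\,1\le i\le n,\ 1\le r\le b_i\}$. It is easy to verify directly from the definition that $D^\ast$ is fixed by every Kohnert move, so $D^\ast\in Min(D^\ast)$; to put $D^\ast$ in $Min(D_0)$, I would either note that repeated application of Kohnert moves from $D_0$ terminates at an element of $Min(D_0)$ (since $rowsum$ strictly decreases at each nontrivial move), and then combine this with the uniqueness step below, or argue directly that $D^\ast\in KD(D_0)$ by a stripping-off argument.

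The main step is showing that any $D$ with column sums $(b_1,\ldots,b_n)$ and $D\ne D^\ast$ admits a nontrivial Kohnert move. Since $D$ and $D^\ast$ have matching column sums but $D\ne D^\ast$, some column $c$ contains a cell at a row $r>b_c$; pick such a cell $(r,c)\in D$. Let $(r,c')$ be the rightmost cell in row $r$, so $c'\ge c$. If the Kohnert move at row $r$ were trivial, then column $c'$ would have cells at every row $1,\ldots,r$, forcing $b_{c'}\ge r$. But $c'\ge c$ combined with the hypothesis $b_1\ge b_2\ge\cdots\ge b_n$ gives $b_{c'}\le b_c<r$, a contradiction. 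Hence $D\notin Min(D_0)$, and combined with $D^\ast\in Min(D_0)$ this yields $Min(D_0)=\{D^\ast\}$.

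I do not anticipate a substantive obstacle here; the key idea is the inequality chain $r>b_c\ge b_{c'}$ obtained by pairing ``cell out of place'' with the weakly decreasing column sums. The only point requiring slight care is separating the existence of an out-of-place cell $(r,c)$ with $r>b_c$ from the fact that the \emph{rightmost} cell in row $r$ (which is what a Kohnert move actually acts on) may lie in a different column $c'\ge c$; the monotonicity hypothesis handles exactly this gap.
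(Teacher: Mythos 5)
Your proof is correct, but it takes a genuinely different route from the paper's. The paper argues constructively: it exhibits, for an arbitrary $\tilde{D}\in KD(D_0)$, an explicit sequence of Kohnert moves that bottom-justifies the columns of $\tilde{D}$ from right to left, thereby showing $D_{min}\preceq\tilde{D}$ directly and concluding $Min(D_0)=\{D_{min}\}$. You instead argue by exclusion: using the column-sum invariant, any $D\neq D^\ast$ has an out-of-place cell $(r,c)$ with $r>b_c$, and the inequality chain $b_{c'}\le b_c<r$ (for the rightmost cell $(r,c')$ of that row) shows the Kohnert move at row $r$ is nontrivial, so $D\notin Min(D_0)$; nonemptiness of $Min(D_0)$ in a finite poset then forces $Min(D_0)=\{D^\ast\}$, which is all Lemma~\ref{lem:bmum} requires. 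Your version is arguably tighter: it sidesteps the paper's informal case analysis about whether ``Step $j$ is achievable,'' and the only place the monotonicity hypothesis enters is the single inequality $b_{c'}\le b_c$, which makes clear exactly where the hypothesis is used. What the paper's constructive argument buys in exchange is an explicit witness that $D_{min}$ lies below every element (rather than deducing this from termination of descending chains), which is the form of the statement it reuses later when describing the minimal elements of key diagrams; your argument recovers the same conclusion, but only after appealing to finiteness.
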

\begin{proof}

Let $D_{min}$ be the diagram obtained by bottom justifying the cells of $D_0$; that is, 
$$D_{min}=\bigcup_{i=1}^n\{(j,c_i)~|~1\le j\le b_i\}.$$ 
We show that $D_{min}\preceq \tilde{D}$ for every $\tilde{D}\in KD(D_0)$. Consequently, $Min(D_0)=\{D_{min}\}$ and the result follows from Lemma~\ref{lem:bmum}.

Given $D\in KD(D_0)$, $D_{min}$ is obtained from $D$ by the following general procedure:
\begin{itemize}
    \item[] Step 1: Apply Kohnert moves to bottom-justify the cells of column $n$ in $D$, if necessary.
    \item[] Step $j$: Apply Kohnert moves to bottom justify the cells of column $n-j+1$ in $D$, if necessary.
\end{itemize}
The output of the procedure above is the diagram $D_{min}$ and we are left with showing that each step is achievable. Now, Step 1 refers to the cells in the last column of $D$. Since there are no cells to the right of column $n$, it is always possible to apply Kohnert moves to cells in that column, i.e., Step 1 is achievable. For Step $j>1$, let $D_{j-1}$ be the diagram obtained after completing step $j-1$ for $1\leq j\leq n$. Then there are three possibilities:
\begin{enumerate}
    \item[1)] In $D_{j-1}$, the cells in column $n-j+1$ are already bottom-justified. 
    \item[2)] In $D_{j-1}$, the cells in column $n-j+1$ are not bottom-justified, but there are cells in rows 1 up to $b_{j-1}$. In this case, $b_j>b_{j+1}$ and there are no cells to the right of the cells in rows higher than $b_{j-1}$. Therefore, there exists a sequence of Kohnert moves at those rows that bottom justifies the cells in column $n-j+1$. 
    \item[3)] In $D_{j-1}$, the cells in column $n-j+1$ are not bottom-justified and there are empty cells in rows 1 up to $b_{j-1}$. Again, there are no cells to the right of the cells in rows higher than $b_{j-1}$, and, consequently, there exists a sequence of Kohnert moves at those rows that bottom justifies the cells in column $n-j+1$. 
\end{enumerate}
Thus, Step $j$ is achievable and the result follows. 
\end{proof}

Moving to rankedness, we have two necessary conditions for a Kohnert poset to be ranked. To aid in proving these conditions, we require the following result which essentially states that if two diagrams $D_1$ and $D_2$ with $D_2\prec D_1$ match in a column $c$ above (resp., below) some row $r$, then all the diagrams between them in the poset must also have the same configuration of cells in column $c$ above (resp., below) row $r$.

\begin{lemma}\label{lem:rankhelp}
   Let $D_0$ be a diagram and consider $D_1,D_2\in KD(D_0)$ satisfying $D_2\prec D_1$.
    \begin{enumerate}
        \item[\textup{(a)}] For $c,r\in\mathbb{N}$, if
        $$D_1\cap\{(\tilde{r},c)~|~\tilde{r}\ge r\}=D_2\cap\{(\tilde{r},c)~|~\tilde{r}\ge r\},$$ 
        then for all $D\in[D_2,D_1]$, $$D\cap\{(\tilde{r},c)~|~\tilde{r}\ge r\}=D_1\cap\{(\tilde{r},c)~|~\tilde{r}\ge r\}.$$
        \item[\textup{(b)}] Similarly, for $c,r\in\mathbb{N}$, if
        $$D_1\cap\{(\tilde{r},c)~|~\tilde{r}\le r\}=D_2\cap\{(\tilde{r},c)~|~\tilde{r}\le r\},$$ 
        then for all $D\in[D_2,D_1]$, $$D\cap\{(\tilde{r},c)~|~\tilde{r}\le r\}=D_1\cap\{(\tilde{r},c)~|~\tilde{r}\le r\}.$$
    \end{enumerate} 
\end{lemma}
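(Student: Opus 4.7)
The plan is to reduce both parts to a single monotonicity observation about how Kohnert moves act within a fixed column. Since a Kohnert move only moves one cell to a strictly lower empty row in the same column, every $D\in KD(D_0)$ contains the same number $n_c$ of cells in column $c$. Listing these cells by row from top to bottom, write $r_1(D)>r_2(D)>\cdots>r_{n_c}(D)$. My first step is to establish the componentwise monotonicity
\[
D'\preceq D \;\Longrightarrow\; r_i(D')\le r_i(D) \text{ for every } 1\le i\le n_c.
\]
It suffices to verify this for a single nontrivial Kohnert move in column $c$: such a move vacates a top position $\rho=r_{i_0}(D)$ sitting above a maximal consecutive block $\rho,\rho-1,\ldots,\rho'+1$ of occupied rows in column $c$ and places a new cell at the first empty row $\rho'$ below the block. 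The sorted-from-the-top list of column-$c$ rows then changes only at indices $i_0,i_0+1,\ldots,i_0+(\rho-\rho')-1$, each of which drops by exactly one.

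Given the monotonicity claim, part (a) follows by a sandwich. Let $k$ be the number of cells of $D_1$ in column $c$ at rows $\ge r$; being the highest ones, they occupy positions $r_1(D_1)>\cdots>r_k(D_1)\ge r>r_{k+1}(D_1)$ (with the convention $r_{n_c+1}(D_1):=0$). The hypothesis says $D_2$ has the same set of cells in that region, so $r_i(D_2)=r_i(D_1)$ for $1\le i\le k$. For any $D\in[D_2,D_1]$, applying the monotonicity to $D_2\preceq D\preceq D_1$ gives $r_i(D_2)\le r_i(D)\le r_i(D_1)$ for all $i$. For $i\le k$ the outer bounds coincide, pinning the top $k$ cells in column $c$ of $D$ to the same rows as in $D_1$; and $r_{k+1}(D)\le r_{k+1}(D_1)<r$ forbids any further cell of $D$ in column $c$ from lying at a row $\ge r$.

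Part (b) is symmetric after reindexing from the bottom: set $s_j(D):=r_{n_c-j+1}(D)$, so that $s_1(D)<s_2(D)<\cdots<s_{n_c}(D)$, and the same monotonicity argument shows $D'\preceq D\Rightarrow s_j(D')\le s_j(D)$. Letting $m$ be the number of cells of $D_1$ in column $c$ at rows $\le r$, the analogous sandwich pins $s_j(D)=s_j(D_1)$ for $1\le j\le m$ and forces $s_{m+1}(D)\ge s_{m+1}(D_2)>r$, giving the desired equality in column $c$ below row $r$.

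The only step that requires any real care is the verification of componentwise monotonicity for a single Kohnert move, where one must correctly track how inserting the new cell at $\rho'$ into the sorted list of column-$c$ positions shifts the indices of the cells still sitting at rows $\rho-1,\ldots,\rho'+1$. Once that bookkeeping is settled, both parts drop out immediately from the sandwich inequality applied at the correct cutoff.
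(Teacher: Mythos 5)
Your proof is correct and rests on the same core fact as the paper's: Kohnert moves only lower cells within a fixed column, so for any $D$ with $D_2\preceq D\preceq D_1$ the column-$c$ contents of $D$ are sandwiched between those of $D_2$ and $D_1$. The paper packages this as ``the number of cells in column $c$ weakly above a given row cannot increase'' and argues by contradiction, while you make the equivalent componentwise monotonicity of the sorted row sequence explicit and conclude directly (your single-move bookkeeping is right); this is a presentational difference, not a different route.
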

\begin{proof}
We prove (a), as (b) follows via similar reasoning. Assume that for integers $c,r\ge 1$ we have $$S=D_1\cap\{(\tilde{r},c)~|~\tilde{r}\ge r\}=D_2\cap\{(\tilde{r},c)~|~\tilde{r}\ge r\}.$$ 
Consider $D\in[D_2,D_1]$ and suppose that 
\begin{equation}\label{eq:rankhelp}
    D\cap\{(\tilde{r},c)~|~\tilde{r}\ge r\}\neq S.
\end{equation}
Since $D\prec D_1$, $D$ can be obtained from $D_1$ by applying a sequence of Kohnert moves. Considering (\ref{eq:rankhelp}), there must exist $\hat{r}$ such that $\hat{r}\ge r$, $(\hat{r},c)\in D_1$, and the cell $(\hat{r},c)$ of $D_1$ is moved to a lower row by a Kohnert move in forming $D$. Note that this implies that $D$ has at least one less cell in column $c$ weakly above row $\hat{r}$ than both $D_1$ and $D_2$. As applying Kohnert moves cannot increase the number of cells in a fixed column above a given row, it follows that $D_2\nprec D$, which is a contradiction. Thus, $D\cap\{(\tilde{r},c)~|~\tilde{r}\ge r\}= S$ and the result follows.
\end{proof}

The following results establish two necessary conditions for a Kohnert poset to be ranked. In both cases, we identify a family of subdiagrams that are associated with intervals containing two maximal chains of different lengths. For Theorem~\ref{thm:ranked1}, Figures~\ref{fig:rank1c1},~\ref{fig:rank1c2}, and~\ref{fig:rank1c3} illustrate those subdiagrams, and for Theorem~\ref{thm:ranked2}, an example is illustrated in Figure~\ref{fig:ranklem}(b).
\begin{theorem}\label{thm:ranked1}
Let $D_0$ be a diagram. Suppose there exists a diagram $D\in KD(D_0)$ such that for some $r,r',r_1,r_2,c,c'\in\mathbb{N}$ satisfying $1\le r_1,r_2< r<r'$ and $1\le c<c'$ we have
\begin{enumerate}
        \item[\textup{(i)}] $(\tilde{r},c),(r,c')\in D$ for $r\le \tilde{r}\le r'$, i.e., the cells in rows $r$ through $r'$ in column $c$ are in the diagram as well as the cell in row $r$ and column $c'$;
        \item[\textup{(ii)}] $(r',\tilde{c})\notin D$ for $\tilde{c}>c$, i.e., the cell in row $r'$ and column $c$ is the rightmost cell in its row;
        \item[\textup{(iii)}] for each $\tilde{r}$ satisfying $r<\tilde{r}<r'$ there exists $\tilde{c}>c$ such that $(\tilde{r},\tilde{c})\in D$, i.e., in every row strictly between row $r$ and row $r'$ there is a cell in a column to the right of column $c$;
        \item[\textup{(vi)}] $(r,\tilde{c})\notin D$ for $c<\tilde{c}<c'$ and $\tilde{c}>c'$, i.e., there are no cells in row $r$ strictly between columns $c$ and $c'$ and the cell in column $c$ of row $r$ is the rightmost cell in its row; and
        \item[\textup{(v)}] $(r_1,c),(r_2,c')\notin D$, i.e., there are empty positions below $(r,c)$ and $(r,c')$. 
    \end{enumerate}
Then $\mathcal{P}(D_0)$ is not ranked.
\end{theorem}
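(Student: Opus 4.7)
The plan is to apply Lemma~\ref{lem:ranksl} by exhibiting, in the interval $[D'',D]$ of $\mathcal{P}(D_0)$ for a suitably chosen $D''\in KD(D_0)$, two maximal chains of different lengths. Using condition (v), let $a$ be the largest row strictly below $r$ with $(a,c)\notin D$ and let $b$ be the largest row strictly below $r$ with $(b,c')\notin D$; both exist because $r_1$ and $r_2$ witness nonemptiness. Set
\[
D'' = \bigl(D\setminus\{(r',c),(r,c')\}\bigr)\cup\{(a,c),(b,c')\}.
\]

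The first task is to build two chains from $D$ down to $D''$, each a sequence of nontrivial Kohnert moves. For the short chain (two moves), I would apply $(r',c)\mapsto(a,c)$---valid by (ii), with $a$ the first empty row below $r'$ in column $c$ because (i) fills rows $r,r+1,\dots,r'-1$ there---followed by $(r,c')\mapsto(b,c')$, which stays valid since column $c'$ was untouched, $(r,c')$ is rightmost in row $r$ by (iv), and $b$ is the first empty row below $r$. For the long chain (three moves), apply $(r,c')\mapsto(b,c')$; then $(r,c)\mapsto(a,c)$, which is legal because by (iv) the cell $(r,c)$ is now rightmost in row $r$ and column $c$ is still unchanged so $a$ is the first empty row below; and finally $(r',c)\mapsto(r,c)$, which works because column $c$ now has cells exactly at $\{a\}\cup\{r+1,\dots,r'\}$ together with $D$'s original cells below $r$, making row $r$ the first empty row below $r'$. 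A direct set computation verifies that both chains terminate at $D''$.

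The core of the argument is then to show that each of the five Kohnert moves in these two chains is actually a covering relation of $\mathcal{P}(D_0)$. Each such move alters the diagram in a single column, so any strictly intermediate diagram $Z$ between a predecessor and its successor would have to produce a net one-column change using two or more moves. Since Kohnert moves push cells only downward and cannot be undone, any move in a different column would persist in $Z$; Lemma~\ref{lem:rankhelp} formalizes this by forcing $Z$ to agree with the endpoints in every column not involved in the step. Restricted to the relevant column, conditions (i)--(iv) pin down which cells can ever be rightmost in their rows throughout the transition. Condition (iii) is crucial: without it one could use a cell $(\tilde r,c)$ with $r<\tilde r<r'$ as an alternative first step, which would spoil the cover $D\precdot E_1$ in the short chain. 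Once the five covers are secured, the short chain has length $2$ and the long chain has length $3$, both maximal in $[D'',D]$, and Lemma~\ref{lem:ranksl} yields that $\mathcal{P}(D_0)$ is not ranked.

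The main obstacle will be the cover verification, since the configuration of $D$ in rows below $r$ within columns $c$ and $c'$ is essentially arbitrary (cf.\ Remark~\ref{rem:diagconv}) and can interact with candidate intermediate moves in subtle ways. I expect this to require a short case analysis based on the relative positions of $a$, $b$, and the other cells in those columns; this is presumably what the three figures referenced in the statement of the theorem are meant to capture.
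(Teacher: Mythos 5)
Your overall strategy---exhibit an interval with two maximal chains of different lengths and invoke Lemma~\ref{lem:ranksl}---is the paper's, and your choice of $D''$ and of the two chains coincides with what the paper does in its Case~1. The genuine gap is that the five moves you list are \emph{not} covering relations in general, and the case analysis you defer (``relative positions of $a$, $b$, and the other cells'') cannot repair this while keeping $D$ as the top of the interval. The obstruction is interference from cells in rows strictly below $r$ in columns $c$ and $c'$: such a cell can be rightmost in its row and can be dropped first, producing a diagram strictly between consecutive terms of your chains. Concretely, take $D_0=D=\{(2,1),(3,1),(4,1),(2,2),(3,2)\}$ with $r=3$, $r'=4$, $c=1$, $c'=2$, $r_1=r_2=1$; conditions (i)--(v) hold and your $a=b=1$, so $D''=\{(1,1),(2,1),(3,1),(1,2),(2,2)\}$. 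Your short chain is $D\succ E_1:=D\ldownarrow^{(4,1)}_{(1,1)}\succ D''=E_1\ldownarrow^{(3,2)}_{(1,2)}$, but $G:=E_1\ldownarrow^{(2,2)}_{(1,2)}$ satisfies $E_1\succ G\succ D''$ (a Kohnert move at row $3$ of $G$ sends $(3,2)$ to $(2,2)$ and returns $D''$), so the second step is not a cover; likewise the first step of your long chain, $D\succ D\ldownarrow^{(3,2)}_{(1,2)}$, is refined by $D\ldownarrow^{(2,2)}_{(1,2)}$. Note that $G$ differs from $E_1$ and $D''$ only in column $c'$ and only in rows strictly between $b$ and $r$, which is exactly the window that Lemma~\ref{lem:rankhelp} does not control, so your appeal to that lemma cannot close the gap. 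In this example your two chains refine to maximal chains of lengths $3$ and $4$ (and $[D'',D]$ also contains one of length $5$), so the conclusion happens to survive, but your argument does not establish it.

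This failure is precisely why the paper's proof splits into three cases according to where ``movable cells'' below row $r$ can sit. In its Case~1 (some column $\hat c>c'$ has a cell in \emph{every} row below $r$) no Kohnert move at a row below $r$ can ever reach columns $c$ or $c'$, and your argument works verbatim with $D_{max}=D$ and $D_{min}=D''$. Otherwise the paper first applies a preprocessing sequence of Kohnert moves at row $r-1$ to flush the interfering cells out of the way, takes the resulting diagram as $D_{max}$, and redefines $D_{min}$ so that the dropped cells land in row $r-1$ rather than at your $a$ and $b$; only after this normalization are the two chains of lengths $2$ and $3$ actually maximal. To complete your proof you would need to build in this preprocessing (or an equivalent device), not merely a finer analysis of the fixed interval $[D'',D]$.
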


\begin{proof}
We show that there exist diagrams $D_{min},D_{max}\in KD(D_0)$ such that $D_{min}\prec D_{max}\preceq D$ and the interval $[D_{min},D_{max}]$ contains two maximal chains of different lengths, one of length 2 and one of length 3. Applying Lemma~\ref{lem:ranksl}, the result will then follow. Note that here, $D_{min}$ and $D_{max}$ are the unique minimal and maximal elements, respectively, of an interval of $\mathcal{P}(D_0)$, but are not necessarily minimal and maximal, respectively, in $\mathcal{P}(D_0)$. Without loss of generality, we assume that $r_1,r_2$ are both the largest values such that $r_1,r_2<r$ and $(r_1,c),(r_2,c')\notin D$. 

We split the proof into three cases depending on the locations of columns $\tilde{c}>c$ which may contain ``movable cells" in rows $\tilde{r}<r$; that is, the locations of columns $\tilde{c}>c$ which may contain cells in rows $\tilde{r}<r$ whose positions can be affected by the application of some sequence of Kohnert moves.
\bigskip

\noindent
\textbf{Case 1:} Suppose that there is a column $\hat{c}$ to the right of column $c'$ which contains a cell in every row strictly below row $r$; that is, suppose there exists $\hat{c}>c'$ such that $(\tilde{r},\hat{c})\in D$ for $1\le \tilde{r}< r$. We depict the general form of such a diagram in Figure~\ref{fig:rank1c1}; see Remark~\ref{rem:diagconv} for a discussion of the pertinent diagram conventions.

\begin{figure}[H]
    \centering
    $$\scalebox{0.9}{\begin{tikzpicture}[scale=0.65]
    \node at (8.5,2.5) {$\bigtimes$};
    \node at (8.5,1.5) {$\vdots$};
    \node at (8.5,0.5) {$\bigtimes$};
    \node at (8.5, -0.5) {$\hat{c}$};
    \node at (5.5,3.5) {$\bigtimes$};
    \node at (1.5,6.5) {$\bigtimes$};
    \node at (-0.5,6.5) {$r'$};
    \node at (-0.5,3.5) {$r$};
    \node at (1.5,5) {$\vdots$};
    \node at (1.5,3.5) {$\bigtimes$};
    \node at (5.5, -0.5) {$c'$};
    \node at (1.5, -0.5) {$c$};
    \node at (6, 5) {(At most one cell per row)};
    \node at (4.5, 1.5) {$\binom{\text{At least one empty position}}{\text{in columns $c$ and $c'$}}$};
    \draw (0,8.5)--(0,0)--(11,0);
    \draw (8,0)--(8,3)--(9,3)--(9,0);
    \draw (8,1)--(9,1);
    \draw (8,2)--(9,2);
    \draw (5,3)--(6,3)--(6,4)--(5,4)--(5,3);
    \filldraw[draw=lightgray,fill=lightgray] (2,3) rectangle (5,4);
    \filldraw[draw=lightgray,fill=lightgray] (6,3) rectangle (10.5,4);
    \filldraw[draw=lightgray,fill=lightgray] (2,6) rectangle (10.5,7);
    \draw (1,3)--(2,3)--(2,7)--(1,7)--(1,3);
    \draw (1,4)--(2,4);
    \draw (1,6)--(2,6);
    \path[pattern=north west lines] (9,0)--(9,3) -- (10.5,3) -- (10.5,0) -- cycle;
    \path[pattern=north west lines] (0,0)--(1,0) -- (1,7) -- (10.5,7) -- (10.5,8) -- (0,8) -- cycle;
\end{tikzpicture}}$$
    \caption{Theorem~\ref{thm:ranked1} Case 1}
    \label{fig:rank1c1}
\end{figure}

\noindent
In this case, we take $D_{max}=D$ and $D_{min}$ to be the diagram obtained from $D$ by removing cells $(r',c)$ and $(r,c')$ and adding the cells $(r_1,c)$ and $(r_2,c')$ .Consider the following two chains from $D_{min}$ to $D_{max}$, both defined by the sequences of Kohnert moves applied to form $D_{min}$ from $D_{max}$.
\begin{itemize}
    \item[1)] Form the chain $C_1$ by applying two Kohnert moves at row $r$ followed by another at row $r'$, i.e.,
    $$ 
    C_1: \qquad D_{max} \succ D_1:= D_{max}\ldownarrow^{(r,c')}_{(r_2,c')} \succ 
    D_2:= D_1\ldownarrow^{(r,c)}_{(r_1,c)} \succ D_{min} = D_2 \ldownarrow^{(r',c)}_{(r,c)}. 
    $$

    \item[2)] Form the chain $C_2$ by applying one Kohnert move at row $r'$ followed by another at row $r$, i.e.,
      $$ 
    C_2: \qquad D_{max} \succ D_3:= D_{max}\ldownarrow^{(r',c)}_{(r_1,c)} \succ D_{min} = D_3 \ldownarrow^{(r,c')}_{(r_2,c')}. 
    $$
\end{itemize}
We claim that the chains $C_1$ and $C_2$ are maximal in $[D_{min},D_{max}]$. In order to establish the claim, we need to show that the relations defining $C_1$ and $C_2$ are all cover relations. As the arguments for each relation are similar, we prove that $D_3\precdot D_{max}$ and leave the other relations to the reader. Assume that there exists a diagram $D^*$ such that $D_3\prec D^*\prec D_{max}$, where $D^*$ is formed from $D_{max}$ by applying a Kohnert move at row $r^*\neq r$. As $D_{max}$ and $D_3$ only differ in positions $(r',c)$ and $(r_1,c)$, applying Lemma~\ref{lem:rankhelp}, it follows that $r_1<r^*<r'$. Now, note that property (iii) of $D_0$ carries over to $D_{max}$ and $D_3$. Consequently, if $r_1<r^*<r'$, then $D^*$ must differ from $D_{max}$ in some column $\tilde{c}>c$; that is, applying Lemma~\ref{lem:rankhelp} once again, we may conclude that $D^*\notin [D_3,D_{max}]$. Thus, $D_3\precdot D_{max}$. Now, since $[D_{min},D_{max}]$ contains two maximal chains of different lengths, as discussed above, it follows that $\mathcal{P}(D_0)$ is not ranked.
\bigskip

\noindent
\textbf{Case 2:} Suppose that there is a column $\hat{c}$ between columns $c$ and $c'$ which contains a cell in every row strictly below row $r$ and that every column to the right of column $\hat{c}$ contains at least one empty position below row $r$; that is, there exists $c<\hat{c}<c'$ such that $(\hat{r},\hat{c})\in D$ for $1\le \hat{r}< r$, and for all $c^\dagger >\hat{c}$, there exists $r^\dagger<r$ such that $(r^\dagger, c^\dagger)\notin D^*$. We depict the general form of such a diagram in Figure~\ref{fig:rank1c2}.

\begin{figure}[H]
    \centering
    $$\scalebox{0.9}{\begin{tikzpicture}[scale=0.65]
    \node at (8.5,2.5) {$\bigtimes$};
    \node at (8.5,1.5) {$\vdots$};
    \node at (8.5,0.5) {$\bigtimes$};
    \node at (8.5, -0.5) {$\hat{c}$};
    \node at (11.5,3.5) {$\bigtimes$};
    \node at (1.5,6.5) {$\bigtimes$};
    \node at (-0.5,6.5) {$r'$};
    \node at (-0.5,3.5) {$r$};
    \node at (1.5,5) {$\vdots$};
    \node at (1.5,3.5) {$\bigtimes$};
    \node at (11.5, -0.5) {$c'$};
    \node at (1.5, -0.5) {$c$};
    \node at (8, 5) {(At most one cell per row)};
    \node at (4.5, 1.5) {$\binom{\text{At least one empty position}}{\text{in column $c$}}$};
    \node at (12,1.5) {$\binom{\text{At least one empty}}{\text{position per column}}$};
    \draw (0,8.5)--(0,0)--(14,0);
    \draw (8,0)--(8,3)--(9,3)--(9,0);
    \draw (8,1)--(9,1);
    \draw (8,2)--(9,2);
    \draw (11,3)--(12,3)--(12,4)--(11,4)--(11,3);
    \filldraw[draw=lightgray,fill=lightgray] (2,3) rectangle (11,4);
    \filldraw[draw=lightgray,fill=lightgray] (2,6) rectangle (13.5,7);
    \filldraw[draw=lightgray,fill=lightgray] (12,3) rectangle (13.5,4);
    \draw (1,3)--(2,3)--(2,7)--(1,7)--(1,3);
    \draw (1,4)--(2,4);
    \draw (1,6)--(2,6);
    \path[pattern=north west lines] (0,0)--(1,0) -- (1,7) -- (13.5,7) -- (13.5,8) -- (0,8) -- cycle;
\end{tikzpicture}}$$
    \caption{Theorem~\ref{thm:ranked1} Case 2}
    \label{fig:rank1c2}
\end{figure}

\noindent
In this case, letting $$k=|\{\tilde{c}~|~\tilde{c}\ge c',~(r-1,c')\in D\}|,$$ we take $D_{max}$ to be the diagram obtained from $D$ by applying $k$ Kohnert moves at row $r-1$; that is, $D_{max}$ is the diagram formed from $D$ by applying enough Kohnert moves at row $r-1$ to drop every cell in row $r-1$ and weakly to the right of column $c'$ to a lower row. We then take $D_{min}$ to be the diagram obtained from $D_{max}$ by removing the cells $(r',c)$ and $(r,c')$, and adding the cells $(r_1,c)$ and $(r-1,c')$.  Consider the following two chains from $D_{min}$ to $D_{max}$, both defined by the sequences of Kohnert moves applied to form $D_{min}$ from $D_{max}$.
\begin{itemize}
    \item[1)] Form the chain $C_1$ by applying two Kohnert moves at row $r$ followed by another at row $r'$, i.e.,
    $$ 
    C_1: \qquad D_{max} \succ D_1:= D_{max}\ldownarrow^{(r,c')}_{(r-1,c')} \succ 
    D_2:= D_1\ldownarrow^{(r,c)}_{(r_1,c)} \succ D_{min} = D_2 \ldownarrow^{(r',c)}_{(r,c)}. 
    $$

    \item[2)] Form the chain $C_2$ by applying one Kohnert move at row $r'$ followed by another at row $r$, i.e.,
      $$ 
    C_2: \qquad D_{max} \succ D_3:= D_{max}\ldownarrow^{(r',c)}_{(r_1,c)} \succ D_{min} = D_3 \ldownarrow^{(r,c')}_{(r-1,c')}. 
    $$
\end{itemize}
Using similar reasoning to that found in Case 1, it follows that $C_1$ and $C_2$ are maximal chains of $[D_{min},D_{max}]$ and, consequently, $\mathcal{P}(D_0)$ is not ranked.
\bigskip

\noindent
\textbf{Case 3:} Suppose that in every column weakly to the right of column $c$ there is an empty position below row $r$; that is, for all $\tilde{c}\ge c$, there exists $\tilde{r}<r$ such that $(\tilde{r},\tilde{c})\notin D$. We depict the general form of such a diagram in Figure~\ref{fig:rank1c3}.

\begin{figure}[H]
    \centering
    $$\scalebox{0.9}{\begin{tikzpicture}[scale=0.65]
    \node at (8.5, -0.5) {$c'$};
    \node at (8.5,3.5) {$\bigtimes$};
    \node at (1.5,6.5) {$\bigtimes$};
    \node at (-0.5,6.5) {$r'$};
    \node at (-0.5,3.5) {$r$};
    \node at (1.5,5) {$\vdots$};
    \node at (1.5,3.5) {$\bigtimes$};
    \node at (1.5, -0.5) {$c$};
    \node at (6, 5) {(At most one cell per row)};
    \node at (5.5, 1.5) {$\binom{\text{At least one empty position}}{\text{in columns $\tilde{c}\ge c$}}$};
    \draw (0,8.5)--(0,0)--(11,0);
    \draw (8,3)--(9,3)--(9,4)--(8,4)--(8,3);
    \filldraw[draw=lightgray,fill=lightgray] (2,3) rectangle (8,4);
    \filldraw[draw=lightgray,fill=lightgray] (9,3) rectangle (10.5,4);
    \filldraw[draw=lightgray,fill=lightgray] (2,6) rectangle (10.5,7);
    \draw (1,3)--(2,3)--(2,7)--(1,7)--(1,3);
    \draw (1,4)--(2,4);
    \draw (1,6)--(2,6);
    \path[pattern=north west lines] (0,0)--(1,0) -- (1,7) -- (10.5,7) -- (10.5,8) -- (0,8) -- cycle;
\end{tikzpicture}}$$
    \caption{Theorem~\ref{thm:ranked1} Case 3}
    \label{fig:rank1c3}
\end{figure}

\noindent
In this case, letting $$k=\{\tilde{c}~|~\tilde{c}\ge c,~(r-1,\tilde{c})\in D\},$$ we take $D_{max}$ to be the diagram obtained from $D$ by applying $k$ Kohnert moves at row $r-1$; that is, $D_{max}$ is the diagram formed from $D$ by applying enough Kohnert moves at row $r-1$ to drop every cell in row $r-1$ and weakly to the right of column $c$ to a lower row. We then take $D_{min}$ to be the diagram obtained from $D_{max}$ by removing the cells $(r',c)$ and $(r,c')$, and adding the cells $(r-1,c)$ and $(r-1,c')$. Consider the following two chains from $D_{min}$ to $D_{max}$, both defined by the sequences of Kohnert moves applied to form $D_{min}$ from $D_{max}$.
\begin{itemize}
    \item[1)] Form the chain $C_1$ by applying two Kohnert moves at row $r$ followed by another at row $r'$, i.e.,
    $$ 
    C_1: \qquad D_{max} \succ D_1:= D_{max}\ldownarrow^{(r,c')}_{(r-1,c')} \succ 
    D_2:= D_1\ldownarrow^{(r,c)}_{(r-1,c)} \succ D_{min} = D_2 \ldownarrow^{(r',c)}_{(r,c)}. 
    $$

    \item[2)] Form the chain $C_2$ by applying one Kohnert move at row $r'$ followed by another at row $r$, i.e.,
      $$ 
    C_2: \qquad D_{max} \succ D_3:= D_{max}\ldownarrow^{(r',c)}_{(r-1,c)} \succ D_{min} = D_3 \ldownarrow^{(r,c')}_{(r-1,c')}. 
    $$
\end{itemize}
Using similar reasoning to that found in Case 1, it follows that $C_1$ and $C_2$ are maximal chains of $[D_{min},D_{max}]$ and, consequently, $\mathcal{P}(D_0)$ is not ranked.
\end{proof}

\begin{theorem}\label{thm:ranked2}
Let $D_0$ be a diagram. Suppose that there exists $D\in KD(D_0)$ such that for some $r,r',c,c'\in\mathbb{N}$ satisfying $1\le c< c'$ and $1\le r<r'-1$ we have
\begin{enumerate}
    \item[\textup{(i)}] $(r',c')\in D$ and $(r', \tilde{c})\notin D$ for $\tilde{c}>c'$, i.e., the cell in row $r'$ and column $c'$ is the rightmost cell in its row; 
    \item[\textup{(ii)}] $(\tilde{r},c)\in D$ for $r < \tilde{r}\le r'$, i.e., the cells between rows $r$ and $r'$ in column $c$ are in the diagram as well as the cell in row $r'$ and column $c$;
    \item[\textup{(iii)}] $(r,c)\notin D$, 
    \item[\textup{(iv)}] for each $\tilde{r}$ satisfying $r<\tilde{r}<r'-1$, there exists $\tilde{c}>c$ such that $(\tilde{r},\tilde{c})\in D$, i.e., in every row strictly between rows $r$ and $r'-1$ there is a cell in a column strictly to the right of column $c$; and
    \item[\textup{(v)}] for each $\tilde{c}>c$, there exists $\tilde{r}$ such that $1\le \tilde{r}<r'$ and $(\tilde{r},\tilde{c})\notin D$, i.e., in every column to the right of column $c$ there is an empty position strictly below row $r'$. 
\end{enumerate}
Then $\mathcal{P}(D_0)$ is not ranked.
\end{theorem}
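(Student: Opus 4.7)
The plan is to adapt the strategy of Theorem~\ref{thm:ranked1}: I will exhibit diagrams $D_{min}\preceq D_{max}\preceq D$ in $KD(D_0)$ so that $[D_{min},D_{max}]$ contains two maximal chains of different lengths, and then apply Lemma~\ref{lem:ranksl}.

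First I would obtain $D_{max}$ from $D$ by repeatedly applying Kohnert moves at row $r'-1$ to drop every cell of row $r'-1$ lying strictly to the right of column $c$; condition~(v) ensures that each such drop is nontrivial, and since the moves only alter columns strictly to the right of $c$ and add cells in rows strictly below $r'-1$, they leave column $c$ and row $r'$ untouched. By condition~(ii), $(r'-1,c)\in D_{max}$, and by construction it is the rightmost cell of row $r'-1$ in $D_{max}$. Writing $(r',c_1),\ldots,(r',c_k)$ for the cells of row $r'$ strictly between columns $c$ and $c'$ (with $c<c_1<\cdots<c_k<c'$), and letting $r_2$ and $r_j^*$ denote the first empty positions in columns $c'$ and $c_j$ below row $r'$ in $D_{max}$ (existing by~(v)), I would define $D_{min}$ to be $D_{max}$ with the cells $(r',c'),(r',c_k),\ldots,(r',c_1),(r',c)$ removed and the cells $(r_2,c'),(r_k^*,c_k),\ldots,(r_1^*,c_1),(r,c)$ added. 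The two chains from $D_{max}$ to $D_{min}$ I plan to use are: (A) a length-$(k+2)$ chain obtained by applying Kohnert moves at row $r'$ in succession, dropping $(r',c')$, then $(r',c_k),\ldots,(r',c_1)$, and finally $(r',c)$ — which lands at $(r,c)$ by conditions~(ii) and~(iii); and (B) a length-$(k+3)$ chain that first applies a Kohnert move at row $r'-1$ to drop $(r'-1,c)$ to $(r,c)$ and then performs the same row-$r'$ sequence as in (A), except that $(r',c)$ now descends only to $(r'-1,c)$ since $(r,c)$ is already occupied. A direct check shows that both chains terminate at the same $D_{min}$: in column $c$, either route yields the configuration $\{(r,c),(r+1,c),\ldots,(r'-1,c)\}$ with $(r',c)$ removed, and the row-$r'$ drops into columns $c',c_1,\ldots,c_k$ agree in the two chains because the preceding moves never touch those columns.

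The main work — and the step I expect to be the principal obstacle — is establishing that every consecutive relation in (A) and (B) is a covering relation, most delicately the first step $(r'-1,c)\to(r,c)$ of (B). The strategy will be: if some $D^*$ were to lie strictly between the endpoints of a given step, then $D^*$ would arise from the upper endpoint via a single Kohnert move of some cell $X$ distinct from the one designated by the step; but Kohnert moves cannot lift cells back up, so the position of $X$ in $D^*$ cannot subsequently be restored to match the lower endpoint, contradicting $D^*\succeq$ (lower endpoint). Condition~(iv) is essential here: it ensures that the cells $(\tilde r,c)$ for $r+1\le\tilde r\le r'-2$ are never rightmost in their rows — neither in $D_{max}$ nor in the intermediate diagrams along the chains, since the preprocessing only adds cells in columns strictly to the right of $c$ — which rules out spurious short-circuits in column $c$. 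Conditions~(i) and~(v) similarly control the row-$r'$ steps, via analogous applications of the helper Lemma~\ref{lem:rankhelp}. Once maximality is verified, the interval $[D_{min},D_{max}]$ contains maximal chains of distinct lengths $k+2$ and $k+3$, and Lemma~\ref{lem:ranksl} yields the desired non-rankedness of $\mathcal{P}(D_0)$.
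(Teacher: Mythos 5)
Your proposal is correct and follows essentially the same route as the paper's proof: the same preprocessing at row $r'-1$ to form $D_{max}$, the same $D_{min}$, and the same two chains of lengths differing by one (the paper's $m$ equals your $k+2$), with maximality of the chains handled by the same Lemma~\ref{lem:rankhelp}-style argument that the paper defers to its Theorem~\ref{thm:ranked1} Case~1 reasoning. The only cosmetic difference is that your ``first empty positions'' $r_2, r_j^*$ all collapse to $r'-1$ after the preprocessing, which is exactly how the paper states it.
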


\begin{proof}
We follow the same strategy as in the proof of Theorem~\ref{thm:ranked1}: We find two diagrams $D_{max},D_{min}\in KD(D_0)$ such that $D_{min}\prec D_{max}\preceq D$ and the interval $[D_{min},D_{max}]$ contains two maximal chains of different lengths. The result then follows as a consequence of Lemma~\ref{lem:ranksl}. As in the proof of Theorem~\ref{thm:ranked1}, $D_{min}$ and $D_{max}$ are the unique minimal and maximal elements, respectively, of an interval of $\mathcal{P}(D_0)$, but are not necessarily minimal and maximal, respectively, in $\mathcal{P}(D_0)$.

Letting $k=|\{\tilde{c}~|~\tilde{c}>c,~(r'-1,\tilde{c})\in D\}|$, we take $D_{max}$ to be the diagram obtained from $D$ by applying $k$ Kohnert moves at row $r'-1$; that is, $D_{max}$ is the diagram formed from $D$ by applying enough Kohnert moves at row $r'-1$ to drop every cell in row $r'-1$ and strictly to the right of column $c$ to a lower row. Now, $D_{min}$ and the distinct lengths of the two maximal chains depend on the cells of $D_{max}$ contained in row $r'$ and columns $c$ through $c'$. For this reason, we define the set $C$, say of size $m$, as
$$C= \left\{\tilde{c}~|~ c\leq \tilde{c} \leq c'\ \text{ and } \ (r',\tilde{c})\in D_{max} \right\} = \left\{ c=c_0 < c_1 < \cdots < c_{m-1} = c'\right\}.$$
Then, $D_{min}$ is the diagram obtained from $D_{max}$ by applying $m$ Kohnert moves at row $r'$; that is, $D_{min}$ is the diagram obtained from $D_{max}$ by moving the rightmost $m-1$ cells in row $r'$ down to row $r'-1$, and moving the $m^{th}$ cell from right to left in row $r'$ down to row $r$.  Consider the following two chains from $D_{min}$ to $D_{max}$, both defined by the sequences of Kohnert moves applied to form $D_{min}$ from $D_{max}$.
\begin{itemize}
    \item[1)] Form the chain $C_1$ by applying $m$ Kohnert moves at row $r'$, i.e.,
    $$ 
    C_1: \qquad D_{max} \succ D_1  \succ 
    D_2 \succ \cdots \succ D_{m-1} \succ D_{min},
    $$
    where 
    $D_1 := D_{max} \ldownarrow^{(r',c_{m-1})}_{(r'-1,c_{m-1})}$, $D_{i+1}:= D_{i} \ldownarrow^{(r',c_{m-i-1})}_{(r'-1,c_{m-i-1})}$ for $1\leq i \leq m-2$, and $D_{min} = D_{m-1} \ldownarrow^{(r',c_0)}_{(r,c_0)}$.

    \item[2)] Form the chain $C_2$ by applying one Kohnert move at row $r'-1$ followed by $m$ Kohnert moves at row $r'$, i.e.,
      $$ 
    C_2:\qquad D_{max} \succ D_1'  \succ 
    D_2' \succ \cdots \succ D_{m}' \succ D_{min},
    $$
    where
 $D_1' := D_{max} \ldownarrow^{(r'-1,c_0)}_{(r,c_0)}$, $D_{i+1}':= D_{i}' \ldownarrow^{(r',c_{m-i})}_{(r'-1,c_{m-i})}$ for $1\leq i \leq m-1$, and $D_{min} = D_{m}'\ldownarrow^{(r',c_0)}_{(r'-1,c_0)}$.
    
\end{itemize}
Using similar reasoning to that found in Case 1 of the proof of Theorem~\ref{thm:ranked1}, it follows that $C_1$ and $C_2$ are maximal chains of $[D_{min},D_{max}]$. As $C_1$ has length $m$ and $C_2$ length $m+1$, it follows that $\mathcal{P}(D_0)$ is not ranked.
\end{proof}

Ongoing, we will not require the full strength of Theorems~\ref{thm:ranked1} and~\ref{thm:ranked2}. Instead, we will make use of the following corollary where (a) corresponds to a special case of Theorem~\ref{thm:ranked1} and (b) a special case of Theorem~\ref{thm:ranked2}.

\begin{corollary}\label{cor:ranked}
Let $D_0$ be a diagram. Suppose that there exists $D\in KD(D_0)$ such that for some $r^*,c_1,c_2\in\mathbb{N}$ satisfying $1\le c_1<c_2$ we have either
\begin{enumerate}
    \item[\textup(a\textup)] $D$ satisfies
    \begin{itemize}
        \item[\textup{(i)}] $(r^*+1,c_1),(r^*+2,c_1),(r^*+1,c_2)\in D$,
        \item[\textup{(ii)}] $(r^*+2,\tilde{c})\notin D^*$ for $\tilde{c}>c_1$,
        \item[\textup{(iii)}] $(r^*+1,\tilde{c})\notin D$ for $c_1<\tilde{c}<c_2$ and $\tilde{c}>c_2$, and
        \item[\textup{(iv)}] $(r^*,c_1),(r^*,c_2)\notin D$; or
    \end{itemize}
    \item[\textup(b\textup)] $D$ satisfies
    \begin{itemize}
        \item[\textup{(i)}] $(r^*+1,c_1),(r^*+2,c_1),(r^*,c_2),(r^*+2,c_2)\in D$,
        \item[\textup{(ii)}] $(r^*+2,\tilde{c})\notin D$ for $\tilde{c}>c_2$,
        \item[\textup{(iii)}] $(r^*+1,\tilde{c})\notin D$ for $\tilde{c}>c_1$, and
        \item[\textup{(iv)}] $(r^*,c_1)\notin D$.
    \end{itemize}
     Then $\mathcal{P}(D_0)$ is not ranked. 
\end{enumerate}
\end{corollary}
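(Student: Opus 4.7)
The plan is essentially a bookkeeping reduction: each part of the corollary is asserted to be a special case of one of the two preceding theorems, so I would prove it by exhibiting the right specialization of parameters and then verifying the five hypotheses of the relevant theorem one by one. No new combinatorial ideas are needed beyond what is already built into Theorems~\ref{thm:ranked1} and~\ref{thm:ranked2}.

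For part (a), I would apply Theorem~\ref{thm:ranked1} with $r = r^*+1$, $r' = r^*+2$, $c = c_1$, $c' = c_2$, and $r_1 = r_2 = r^*$. Under this substitution, hypothesis (i) of Theorem~\ref{thm:ranked1} (the cells $(\tilde r, c)$ for $r \le \tilde r \le r'$ together with $(r,c')$) is exactly the three containments in the corollary's (i); hypothesis (ii) matches the corollary's (ii); hypothesis (iii) is vacuous because no integer $\tilde r$ satisfies $r^*+1 < \tilde r < r^*+2$; hypothesis (iv) is the corollary's (iii); and hypothesis (v) is the corollary's (iv). Since all hypotheses are verified, Theorem~\ref{thm:ranked1} gives that $\mathcal{P}(D_0)$ is not ranked.

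For part (b), I would apply Theorem~\ref{thm:ranked2} with $r = r^*$, $r' = r^*+2$, $c = c_1$, and $c' = c_2$; the constraint $r < r'-1$ becomes $r^* < r^*+1$, which is immediate. Hypotheses (i), (ii), and (iii) of Theorem~\ref{thm:ranked2} follow directly from the corollary's (i), (i), and (iv), respectively, and hypothesis (iv) of the theorem is vacuous because no $\tilde r$ satisfies $r^* < \tilde r < r^*+1$. The only step requiring a moment's thought is hypothesis (v): for each $\tilde c > c_1$, the corollary's (iii) provides $(r^*+1, \tilde c) \notin D$, and since $r^*+1 < r^*+2 = r'$, this cell is the required witness. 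Hence Theorem~\ref{thm:ranked2} applies.

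I do not expect any real obstacle; the only care point is keeping the dual role of $r^*+1$ straight. In part (a) it is the lower of the two key rows ($r = r^*+1$), whereas in part (b) the key lower row is $r^*$ itself, and $r^*+1$ appears only as the witness row used to verify the "empty position below $r'$" hypothesis of Theorem~\ref{thm:ranked2}. Once the substitutions are in place, each hypothesis of the corollary lines up with exactly one hypothesis of the corresponding theorem, and the conclusion follows.
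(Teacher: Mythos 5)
Your proposal is correct and uses exactly the same parameter specializations as the paper's own (very terse) proof: part (a) is Theorem~\ref{thm:ranked1} with $r_1=r_2=r^*$, $r=r^*+1$, $r'=r^*+2$, $c=c_1$, $c'=c_2$, and part (b) is Theorem~\ref{thm:ranked2} with $r=r^*$, $r'=r^*+2$, $c=c_1$, $c'=c_2$. Your hypothesis-by-hypothesis verification (including the vacuous middle-row conditions and the use of $(r^*+1,\tilde{c})\notin D$ as the witness for hypothesis (v) of Theorem~\ref{thm:ranked2}) is more detail than the paper provides, but it is the same argument.
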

In Example~\ref{ex:rankconfig}, we depict the general forms of the diagrams described in Corollary~\ref{cor:ranked}.
\begin{proof}
(a) corresponds to Theorem~\ref{thm:ranked1} with $r_1=r_2=r^*$, $r=r^*+1$, $r'=r^*+2$, $c=c_1$, and $c'=c_2$. (b) corresponds to Theorem~\ref{thm:ranked2} with $r=r^*$, $r'=r^*+2$, $c=c_1$, and $c'=c_2$. 
\end{proof}

\begin{example}\label{ex:rankconfig}
    In Figure~\ref{fig:ranklem}, \textup{(a)} illustrates the subdiagram described in Corollary~\ref{cor:ranked} \textup{(a)}, and \textup{(b)} the subdiagram described in Corollary~\ref{cor:ranked} \textup{(b)}.
    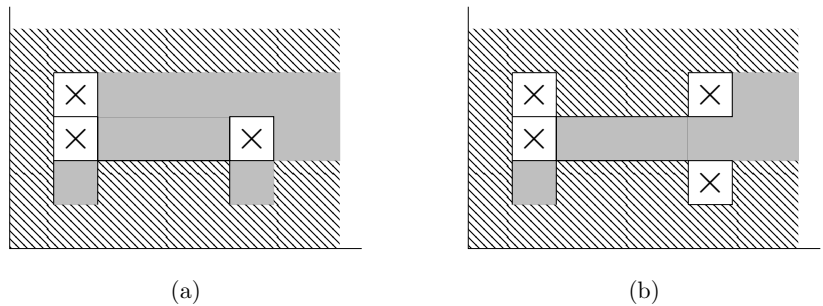
\begin{figure}[H]
    \centering
    $$\scalebox{0.9}{\begin{tikzpicture}[scale=0.65]
    \filldraw[fill=lightgray] (1,1) rectangle (4,2);
    \filldraw[draw=lightgray,fill=lightgray] (1,2) rectangle (4,3);
    \filldraw[draw=lightgray,fill=lightgray] (4,2) rectangle (5,3);
    \filldraw[draw=lightgray,fill=lightgray] (0,0) rectangle (1,1);
    \filldraw[draw=lightgray,fill=lightgray] (4,0) rectangle (5,1);
    \filldraw[draw=lightgray,fill=lightgray] (5,1) rectangle (6.5,3);
    \node at (0.5, 1.5) {$\bigtimes$};
    \node at (0.5, 2.5) {$\bigtimes$};
  \node at (4.5, 1.5) {$\bigtimes$};
  \draw (-1,4.5)--(-1,-1)--(7,-1);
  \draw (0,3)--(1,3)--(1,2);
  \draw (0,1)--(1,1)--(1,2)--(0,2);
  \draw (0,3)--(0,0);
  \draw (4,1)--(5,1)--(5,2)--(4,2)--(4,1);
  \draw (1,0)--(1,1)--(2,1);
  \draw (2,1)--(3,1);
    \draw (3,1)--(4,1)--(4,0);
    \path[pattern=north west lines] (-1,-1)--(6.5,-1) -- (6.5,1) -- (5,1) -- (5,0) -- (4,0)--(4,1)--(1,1)--(1,0)--(0,0)--(0,3)--(6.5,3)--(6.5, 4)--(-1,4)-- cycle;
  \node at (3, -2) {(a)};
\end{tikzpicture}}\quad\quad\quad\quad\scalebox{0.9}{\begin{tikzpicture}[scale=0.65]
    \filldraw[fill=lightgray] (1,1) rectangle (4,2);
    \filldraw[draw=lightgray,fill=lightgray] (4,1) rectangle (5,2);
    \filldraw[draw=lightgray,fill=lightgray] (0,0) rectangle (1,1);
    
    \filldraw[draw=lightgray,fill=lightgray] (5,1) rectangle (6.5,3);
    \node at (0.5, 1.5) {$\bigtimes$};
    \node at (0.5, 2.5) {$\bigtimes$};
  \node at (4.5, 2.5) {$\bigtimes$};
  \node at (4.5, 0.5) {$\bigtimes$};
  \draw (-1,4.5)--(-1,-1)--(7,-1);
  \draw (0,3)--(0,0);
  \draw (0,3)--(1,3)--(1,2);
  \draw (0,1)--(1,1)--(1,2)--(0,2);
  \draw (4,2)--(5,2)--(5,3)--(4,3)--(4,2);
  \draw (4,0)--(5,0)--(5,1)--(4,1)--(4,0);
  \draw (1,0)--(1,1)--(2,1);
  \draw (2,1)--(3,1);
  \path[pattern=north west lines] (-1,-1)--(6.5,-1) -- (6.5,1) -- (5,1) -- (5,0) -- (4,0)--(4,1)--(1,1)--(1,0)--(0,0)--(0,3)--(1,3)--(1,2)--(4,2)--(4,3)--(6.5,3)--(6.5, 4)--(-1,4)-- cycle;
  \node at (3, -2) {(b)};
\end{tikzpicture}}$$
    \caption{Subdiagrams described in Corollary~\ref{cor:ranked}}
    \label{fig:ranklem}
\end{figure}
\end{example}

\section{At most one cell in each column}\label{sec:obpc}

In this section, we consider Kohnert posets associated with diagrams that contain at most one cell per column. More specifically, we show that the associated Kohnert posets are always bounded and ranked.

\begin{example}
Consider the diagram $D_0=\{(2,1),(3,2),(2,3)\}$. The Hasse diagram of $\mathcal{P}(D_0)$ is illustrated in Figure~\ref{fig:dobechasse}. 

\begin{figure}[H]
    \centering
    $$\scalebox{0.8}{\begin{tikzpicture}
` \node (1) at (0,0) {\begin{tikzpicture}[scale=0.4]
  \node at (0.5, 0.5) {$\bigtimes$};
  \node at (1.5, 0.5) {$\bigtimes$};
  \node at (2.5, 0.5) {$\bigtimes$};
  \draw (0,4)--(0,0)--(4,0);
  \draw (0,0)--(0,1)--(1,1)--(1,0);
  \draw (1,1)--(2,1);
  \draw (2,0)--(2,1)--(3,1)--(3,0);
\end{tikzpicture}};
\node (2) at (-2,3) {\begin{tikzpicture}[scale=0.4]
  \node at (0.5, 1.5) {$\bigtimes$};
  \node at (1.5, 0.5) {$\bigtimes$};
  \node at (2.5, 0.5) {$\bigtimes$};
  \draw (0,4)--(0,0)--(4,0);
  \draw (0,1)--(0,2)--(1,2)--(1,1)--(0,1);
  \draw (1,0)--(1,1)--(2,1);
  \draw (2,0)--(2,1)--(3,1)--(3,0);
\end{tikzpicture}};
\node (3) at (2,3) {\begin{tikzpicture}[scale=0.4]
  \node at (0.5, 0.5) {$\bigtimes$};
  \node at (1.5, 1.5) {$\bigtimes$};
  \node at (2.5, 0.5) {$\bigtimes$};
  \draw (0,4)--(0,0)--(4,0);
  \draw (0,0)--(0,1)--(1,1)--(1,0);
  \draw (1,1)--(1,2)--(2,2)--(2,1)--(1,1);
  \draw (2,0)--(2,1)--(3,1)--(3,0);
\end{tikzpicture}};
\node (4) at (-2,6) {\begin{tikzpicture}[scale=0.4]
  \node at (0.5, 1.5) {$\bigtimes$};
  \node at (1.5, 1.5) {$\bigtimes$};
  \node at (2.5, 0.5) {$\bigtimes$};
  \draw (0,4)--(0,0)--(4,0);
  \draw (0,1)--(0,2)--(1,2)--(1,1)--(0,1);
  \draw (1,2)--(2,2)--(2,1)--(1,1);
  \draw (2,0)--(2,1)--(3,1)--(3,0);
\end{tikzpicture}};
\node (5) at (2,6) {\begin{tikzpicture}[scale=0.4]
  \node at (0.5, 0.5) {$\bigtimes$};
  \node at (1.5, 2.5) {$\bigtimes$};
  \node at (2.5, 0.5) {$\bigtimes$};
  \draw (0,4)--(0,0)--(4,0);
  \draw (0,0)--(0,1)--(1,1)--(1,0);
  \draw (1,2)--(1,3)--(2,3)--(2,2)--(1,2);
  \draw (2,0)--(2,1)--(3,1)--(3,0);
\end{tikzpicture}};
\node (6) at (-2,9) {\begin{tikzpicture}[scale=0.4]
  \node at (0.5, 1.5) {$\bigtimes$};
  \node at (1.5, 1.5) {$\bigtimes$};
  \node at (2.5, 1.5) {$\bigtimes$};
  \draw (0,4)--(0,0)--(4,0);
  \draw (0,1)--(0,2)--(1,2)--(1,1)--(0,1);
  \draw (1,2)--(2,2)--(2,1)--(1,1);
  \draw (2,2)--(3,2)--(3,1)--(2,1);
\end{tikzpicture}};
\node (7) at (2,9) {\begin{tikzpicture}[scale=0.4]
  \node at (0.5, 1.5) {$\bigtimes$};
  \node at (1.5, 2.5) {$\bigtimes$};
  \node at (2.5, 0.5) {$\bigtimes$};
  \draw (0,4)--(0,0)--(4,0);
  \draw (0,1)--(0,2)--(1,2)--(1,1)--(0,1);
  \draw (1,2)--(1,3)--(2,3)--(2,2)--(1,2);
  \draw (2,0)--(2,1)--(3,1)--(3,0);
\end{tikzpicture}};
\node (8) at (0,12) {\begin{tikzpicture}[scale=0.4]
  \node at (0.5, 1.5) {$\bigtimes$};
  \node at (1.5, 2.5) {$\bigtimes$};
  \node at (2.5, 1.5) {$\bigtimes$};
  \draw (0,4)--(0,0)--(4,0);
  \draw (0,1)--(0,2)--(1,2)--(1,1)--(0,1);
  \draw (1,2)--(1,3)--(2,3)--(2,2)--(1,2);
  \draw (2,2)--(3,2)--(3,1)--(2,1)--(2,2);
\end{tikzpicture}};
\draw (2)--(1)--(3);
\draw (2)--(4);
\draw (3)--(5);
\draw (4)--(6);
\draw (4)--(7)--(5);
\draw (6)--(8)--(7);
\end{tikzpicture}}$$
    \caption{Hasse diagram of $\mathcal{P}(D_0)$}
    \label{fig:dobechasse}
\end{figure}
\end{example}

The following result is a direct consequence of Proposition~\ref{prop:nummingen} and keeping in mind our convention for empty columns discussed in Remark~\ref{rem:rempty}.

\begin{theorem}\label{thm:obpcmin}
 If a diagram $D_0$ has at most one cell in each column, then $\mathcal{P}(D_0)$ is bounded. In fact, if $|D_0|=m$, then the unique minimal element of $\mathcal{P}(D_0)$ is given by $D_{min}=
 \{(1,1), (1,2), \ldots, (1,m)\}$.
\end{theorem}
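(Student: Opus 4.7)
The plan is to reduce the statement to a direct application of Proposition~\ref{prop:nummingen}. Since $D_0$ has at most one cell per column and $|D_0|=m$, there are exactly $m$ nonempty columns, each containing a single cell. Invoking the convention of Remark~\ref{rem:rempty} on empty columns---together with the fact, implicit in that remark, that an empty column anywhere in the diagram neither affects Kohnert moves nor contributes to the resulting poset (so that any such columns may be deleted and the remaining ones reindexed consecutively without altering $\mathcal{P}(D_0)$)---we may assume the nonempty columns of $D_0$ are columns $1, 2, \ldots, m$. The column-count sequence is then $b_1 = b_2 = \cdots = b_m = 1$, which is trivially weakly decreasing.

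Proposition~\ref{prop:nummingen} therefore applies and gives immediately that $\mathcal{P}(D_0)$ is bounded. Moreover, the proof of that proposition identifies the unique minimum as the diagram obtained by bottom-justifying the cells of $D_0$ column by column. Since each column has exactly one cell, bottom-justification places each of those cells in row $1$, yielding $D_{min} = \{(1,1),(1,2),\ldots,(1,m)\}$, exactly as claimed.

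There is essentially no obstacle here: the theorem is a direct specialization of Proposition~\ref{prop:nummingen}. The only subtle point is the appeal to the standing convention on empty columns, which is precisely what ensures that the column counts of $D_0$ can be read off as a weakly decreasing sequence so that the hypothesis of Proposition~\ref{prop:nummingen} is met.
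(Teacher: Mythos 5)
Your proposal is correct and matches the paper's own argument: the paper proves this theorem exactly by citing Proposition~\ref{prop:nummingen} together with the empty-column convention of Remark~\ref{rem:rempty}, and identifies $D_{min}$ as the bottom-justified diagram from that proposition's proof. No issues.
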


\begin{remark}
    Note that, as a consequence of Theorem~\ref{thm:obpcmin}, for a diagram $D_0$ in which every column contains at most one cell, we have $b(D_0)=|D_0|$.
\end{remark}

Next, we show that such Kohnert posets are always ranked.

\begin{theorem}\label{thm:obpcrank}
Let $D_0$ be a diagram with at most one cell in each column. Then $\mathcal{P}(D_0)$ is a ranked poset with a rank function $rk(D)$ defined for $D\in KD(D_0)$ to be the number of empty boxes contained below the cells in $D$; that is, 
$rk(D) =rowsum(D)-b(D_0)$.
\end{theorem}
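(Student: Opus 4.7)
The plan is to leverage the fact that diagrams with at most one cell per column are extremely well-behaved under Kohnert moves: every nontrivial move shifts the lone cell in some column $c$ straight down by one row. First I would observe that Kohnert moves preserve the number of cells in each column, so every $D\in KD(D_0)$ inherits the ``at most one cell per column'' property from $D_0$. In particular, if a Kohnert move at row $r$ moves the cell $(r,c)$, then column $c$ contains no other cells, so the first empty position below $(r,c)$ is simply $(r-1,c)$. Hence every nontrivial Kohnert move decreases $rowsum$ by exactly $1$.

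Next, I would verify the two defining properties of a rank function. For property (1), if $D'\prec D$, then $D'$ is obtained from $D$ by some sequence of $k\ge 1$ nontrivial Kohnert moves, and by the observation above $rowsum(D)-rowsum(D')=k\ge 1$, giving $rk(D')<rk(D)$. For property (2), suppose $D'\precdot D$ is a cover relation, realized by a sequence of $k$ nontrivial Kohnert moves from $D$ to $D'$. If $k\ge 2$, let $D''$ be the diagram obtained after the first of these moves; then $D'\prec D''\prec D$ with all three distinct (since each move strictly decreases $rowsum$), contradicting that $D'\precdot D$. Therefore $k=1$ and $rk(D)-rk(D')=1$.

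Finally, I would confirm that $rk$ takes values in $\mathbb{Z}_{\ge 0}$ and is well-defined. By Theorem~\ref{thm:obpcmin}, $D_{min}=\{(1,1),\ldots,(1,m)\}$ is the unique minimal element and $b(D_0)=rowsum(D_{min})=m$, so $rk(D_{min})=0$; every other $D\in KD(D_0)$ satisfies $D_{min}\prec D$ and hence $rk(D)>0$ by property (1). This establishes that $rk$ is a valid rank function on $\mathcal{P}(D_0)$.

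There is no genuine obstacle here; the only subtlety worth highlighting in the write-up is the step identifying cover relations with single Kohnert moves (cf.\ Remark~\ref{rem:cover}), which in general fails but in this restricted setting is forced by the strict $rowsum$-decrease of $1$ per move.
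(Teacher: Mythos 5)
Your proposal is correct and follows essentially the same route as the paper: the key point in both is that with at most one cell per column, every nontrivial Kohnert move drops a cell from $(r,c)$ to $(r-1,c)$, so $rowsum$ decreases by exactly one per move, which forces covers to be single moves and makes $rk$ a rank function. You simply spell out the verification of property (1), the cover-equals-one-move step, and nonnegativity, which the paper leaves implicit.
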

\begin{proof}
    Let $D_1, D_2\in \mathcal{P}(D_0)$ with  $D_2 \precdot D_1$. Then $D_2$ is obtained by applying exactly one nontrivial Kohnert move to $D_1$. Since there is at most one cell in each column, a nontrivial Kohnert move takes a cell $(r,c)$ to $(r-1,c)$. Thus, $rk(D_2)=rk(D_1)-1$ and the result follows.
\end{proof}

\section{Two-row diagrams}\label{sec:2r}

In this section, we consider Kohnert posets associated with diagrams for which there are only two nonempty rows; we refer to such diagrams as \textbf{two-row diagrams}. Note that we do not assume that the two nonempty rows of a two-row diagram are consecutive.

Letting $$D=\{(r_1,c^1_i)~|~1\le i\le n_1\}\cup \{(r_2,c^2_i)~|~1\le i\le n_2\}$$ be a two-row diagram with cells in rows $r_1$ and $r_2$ with $r_1<r_2$, we encode $D$ more compactly as $$D=(r_1|c_1^1,\hdots,c_{n_1}^1)\cup(r_2|c_1^2,\hdots,c_{n_2}^2).$$ 
Moreover, we make use of the following notation corresponding to a partitioning of the nonempty columns of $D$.
\begin{itemize}
    \item $\col[]{D}{r_1,r_2}$ denotes the set of columns of $D$ containing cells in both rows, i.e.,
    $$\col[]{D}{r_1,r_2}=\{c_1^1,\hdots,c_{n_1}^1\}\cap \{c_1^2,\hdots,c_{n_2}^2\}.$$
    In general, for $D'\in KD(D)$, we refer to the two cells in a column of $\col[]{D}{r_1,r_2}$ (not necessarily in consecutive rows) as a \textbf{block}.
    \item $\col[]{D}{r_i}$ denotes the set of columns of $D$ containing cells only in row $r_i$ for $i=1,2$, i.e.,  for $i=1,2$,
   $$ \col[]{D}{r_i}=\{c_1^i,\hdots,c_{n_i}^i\}\backslash\col[]{D}{r_1,r_2}.$$
    \item $\col[\leftarrow]{D}{r_i}$ denotes the set of columns of $D$ containing cells only in row $r_i$ for $i=1,2$ which are to the left of the rightmost column in $\col[]{D}{r_1,r_2}$, i.e., for $i=1,2$,
    $$\col[\leftarrow]{D}{r_i}=\{c\in \col[]{D}{r_i}~|~c<\max ~\col[]{D}{r_1,r_2}\}.$$
    \item $\col[\rightarrow]{D}{r_i}$ denotes the set of columns of $D$ containing boxes only in row $r_i$ for $i=1,2$ which are to the right of the rightmost column in $\col[]{D}{r_1,r_2}$,  i.e.,  for $i=1,2$,
    $$\col[\rightarrow]{D}{r_i}=\{c\in \col[]{D}{r_i}~|~c>\max ~\col[]{D}{r_1,r_2}\}.$$
\end{itemize}
Note that $\col[\leftarrow]{D}{r_i}$ and $\col[\rightarrow]{D}{r_i}$ refer to columns containing cells to the left and right, respectively, of the largest column in $\col[]{D}{r_1,r_2}$, and that their intersection with $\col[]{D}{r_1,r_2}$ is empty. The following example illustrates these notions. 
\begin{example}
Consider the diagram $D=(2|2,3,4,6,8,11,13)\cup (4|1,4,5,7,8,9,10,12)$ illustrated in Figure~\ref{fig:2r}. Then $r_1=2$, $r_2=4$, and
$$
\begin{array}{lll}
\col[]{D}{r_1,r_2}=\{4,8\} & & \\
\col[]{D}{r_1}=\{2,3,6,11,13\} & \qquad \col[\rightarrow]{D}{r_1}=\{11,13\} & \qquad \col[\leftarrow]{D}{r_1}=\{2,3,6\} \\
\col[]{D}{r_2}=\{1,5,7,9,10,12\} & \qquad \col[\rightarrow]{D}{r_2}=\{9,10,12\} & \qquad \col[\leftarrow]{D}{r_2}=\{1,5,7\}
\end{array}
$$
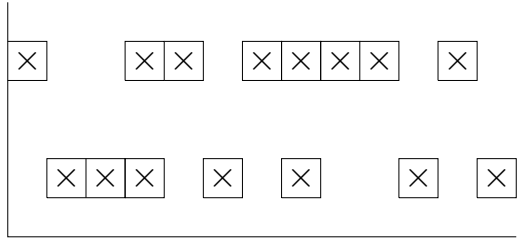
\begin{figure}[H]
    \centering
    $$\scalebox{0.8}{\begin{tikzpicture}[scale=0.65]
  \node at (0.5, 4.5) {$\bigtimes$};
  \node at (1.5, 1.5) {$\bigtimes$};
  \node at (2.5, 1.5) {$\bigtimes$};
  \node at (3.5, 1.5) {$\bigtimes$};
  \node at (3.5, 4.5) {$\bigtimes$};
  \node at (4.5, 4.5) {$\bigtimes$};
  \node at (5.5, 1.5) {$\bigtimes$};
  \node at (6.5, 4.5) {$\bigtimes$};
  \node at (7.5, 4.5) {$\bigtimes$};
  \node at (7.5, 1.5) {$\bigtimes$};
  \node at (8.5, 4.5) {$\bigtimes$};
  \node at (9.5, 4.5) {$\bigtimes$};
  \node at (10.5, 1.5) {$\bigtimes$};
  \node at (11.5, 4.5) {$\bigtimes$};
  \node at (12.5, 1.5) {$\bigtimes$};
  \draw (0,6)--(0,0)--(13,0);
  \draw (0,4)--(1,4)--(1,5)--(0,5);
  \draw (1,1)--(3,1)--(3,2)--(1,2)--(1,1);
  \draw (2,1)--(2,2);
  \draw (3,1)--(4,1)--(4,2)--(3,2)--(3,1);
  \draw (3,4)--(4,4)--(4,5)--(3,5)--(3,4);
  \draw (4,4)--(5,4)--(5,5)--(4,5);
  \draw (5,1)--(6,1)--(6,2)--(5,2)--(5,1);
  \draw (6,4)--(7,4)--(7,5)--(6,5)--(6,4);
  \draw (7,4)--(8,4)--(8,5)--(7,5)--(7,4);
  \draw (7,1)--(8,1)--(8,2)--(7,2)--(7,1);
  \draw (8,4)--(9,4)--(9,5)--(8,5)--(8,4);
  \draw (9,4)--(10,4)--(10,5)--(9,5);
  \draw (10,1)--(11,1)--(11,2)--(10,2)--(10,1);
  \draw (11,4)--(12,4)--(12,5)--(11,5)--(11,4);
  
  \draw (12,1)--(13,1)--(13,2)--(12,2)--(12,1);
\end{tikzpicture}}$$
    \caption{$D=(2|2,3,4,6,8,11,13)\cup (4|1,4,5,7,8,9,10,12)$}
    \label{fig:2r}
\end{figure}
\end{example}

In studying the Kohnert posets associated with two-row diagrams, the following observation proves useful. 
\begin{lemma}\label{lem:2rbase}
Let $D_0=(r_1|c_1^1,\hdots,c_{n_1}^1)\cup(r_2|c_1^2,\hdots,c_{n_2}^2)$ with $1<r_1<r_2$. Then $D=(2|c_1^1,\hdots,c_{n_1}^1)\cup(3|c_1^2,\hdots,c_{n_2}^2)\in KD(D_0)$. 
\end{lemma}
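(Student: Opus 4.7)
The plan is to induct on $r_1 + r_2$. The base case is $r_1 = 2$, $r_2 = 3$, where $D_0 = D$ and nothing is needed. For the inductive step, I would split into two cases according to whether the two occupied rows are adjacent.

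In the first case, $r_2 > r_1 + 1$, I would apply $n_2$ consecutive Kohnert moves at row $r_2$. The claim to verify is that the $k$-th such move takes the cell in column $c_{n_2 - k + 1}^2$ straight from row $r_2$ down to row $r_2 - 1$, for each $k = 1, \ldots, n_2$. This holds because: (a) earlier moves in the sequence have deposited cells only in strictly larger-indexed columns, so in the currently-acted-on column, row $r_2 - 1$ is still empty; and (b) the only other potential obstruction within that column is a cell at $(r_1, c_{n_2 - k + 1}^2)$, which sits strictly below row $r_2 - 1$ by the case hypothesis $r_2 - 1 > r_1$. Consequently, the first empty position below $(r_2, c_{n_2 - k + 1}^2)$ is exactly $(r_2 - 1, c_{n_2 - k + 1}^2)$. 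After all $n_2$ moves, the resulting diagram is $(r_1|c_1^1, \ldots, c_{n_1}^1) \cup (r_2 - 1|c_1^2, \ldots, c_{n_2}^2)$, which still satisfies $1 < r_1 < r_2 - 1$ and has strictly smaller row-index sum, so the inductive hypothesis finishes the case.

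In the second case, $r_2 = r_1 + 1$ and $r_1 > 2$, I would instead apply $n_1$ consecutive Kohnert moves at row $r_1$. Since no cell of $D_0$ sits strictly below row $r_1$, the analogous bookkeeping shows that each move drops its cell from row $r_1$ to row $r_1 - 1$, producing the diagram $(r_1 - 1|c_1^1, \ldots, c_{n_1}^1) \cup (r_1 + 1|c_1^2, \ldots, c_{n_2}^2)$. Since $r_1 > 2$, the new pair $(r_1 - 1, r_1 + 1)$ satisfies $1 < r_1 - 1 < r_1 + 1$, and its row-index sum $2r_1$ is strictly smaller than $r_1 + r_2 = 2r_1 + 1$, so the inductive hypothesis applies. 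The one genuine obstacle in both cases is the bookkeeping needed to confirm that each intermediate Kohnert move drops the chosen cell by exactly one row, and not further: the relevant column has not yet been touched by the running sequence, and the strict inequality with the other occupied row rules out a two-step cascade. Once this bookkeeping is in hand, the induction runs cleanly.
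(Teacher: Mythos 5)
Your proof is correct and takes essentially the same approach as the paper's: both arguments shift an occupied row down one step at a time, relying on the observation that in these two-row configurations each Kohnert move drops a cell by exactly one row because the target position one row down is still empty. The paper packages this as an explicit two-phase procedure (lower all of row $r_1$ to row $2$, then all of row $r_2$ to row $3$) rather than as an induction on $r_1+r_2$, but the content is the same and your bookkeeping of why no cell falls more than one row is, if anything, more explicit than the paper's.
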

\begin{proof}
Intuitively, we can form $D$ from $D_0$ by first applying a sequence of Kohnert moves which drop the cells of row $r_1$ down to row 2 and then applying a sequence of Kohnert moves which drop the cells of row $r_2$ down to row 3. More formally, we can form $D$ from $D_0$ as follows.
\begin{itemize}
    \item If $r_1>2$, then successively apply $n_1$ Kohnert moves at rows $r_1$ down to 3; and do nothing otherwise. 
    \item If $r_2>3$, then successively apply $n_2$ Kohnert moves at rows $r_2$ down to 4; and do nothing otherwise.
\end{itemize}
\end{proof}

Starting with boundedness, unlike in the case of diagrams with at most one cell per column, two-row diagrams can generate posets which are not bounded. In particular, we have the following.

\begin{theorem}\label{thm:2rnummin}
Let $D_0=(r_1|c_1^1,\hdots,c_{n_1}^1)\cup(r_2|c_1^2,\hdots,c_{n_2}^2)$ with $r_1<r_2$. Then
$$|Min(D_0)|=\begin{cases}
|\col[\leftarrow]{D_0}{r_1}|+1, & r_1>1; \\
1, & r_1=1.
\end{cases}$$
\end{theorem}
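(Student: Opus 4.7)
The proof begins with two structural observations about any $T \in Min(D_0)$. First, since columns of $D_0$ contain at most two cells and Kohnert moves preserve column membership, the rightmost cell in the topmost nonempty row of $T$—which by minimality must have every position below it filled—forces the topmost row to be at most row $2$. Second, the rightmost cell in row $2$ of $T$ must have $(1, c)$ filled, so $c$ lies in $\col[]{D_0}{r_1, r_2}$. Setting $c^* := \max \col[]{D_0}{r_1, r_2}$ (when this set is nonempty), these facts determine the cell positions of $T$ on $\col[]{D_0}{r_1, r_2}$ (cells at both $(1, c)$ and $(2, c)$) and on $\col[\rightarrow]{D_0}{r_1} \cup \col[\rightarrow]{D_0}{r_2}$ (cell at $(1, c)$), leaving cells in $\col[\leftarrow]{D_0}{r_1} \cup \col[\leftarrow]{D_0}{r_2}$ a priori free to sit at row $1$ or row $2$.

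For $r_1 > 1$, I would apply Lemma~\ref{lem:2rbase} to reduce to $\bar{D}_0 := (2|c_1^1, \ldots, c_{n_1}^1) \cup (3|c_1^2, \ldots, c_{n_2}^2)$: every minimum of $\mathcal{P}(D_0)$ has cells only in rows $1$ and $2$ and is therefore reachable from $\bar{D}_0$, so $Min(D_0) = Min(\bar{D}_0)$. In this reduced setting each column $c \in \col[]{D_0}{r_1, r_2}$ cycles through the states $\{2, 3\}$, $\{1, 3\}$, $\{1, 2\}$ under Kohnert moves, with $\{1, 2\}$ absorbing. The main obstacle I expect is the \emph{blocking lemma}: for every $c \in \col[\leftarrow]{D_0}{r_2}$, the cell originally at $(3, c)$ cannot reach $(1, c)$ in any reachable diagram, so in any minimum the cell in column $c$ lies at $(2, c)$. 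The argument is that the drop $(3, c) \to (2, c)$ forces column $c^*$ into state $\{1, 2\}$ at that moment (to make $(3, c)$ rightmost in row $3$), while the subsequent drop $(2, c) \to (1, c)$ would require column $c^*$ in state $\{1, 3\}$ (to free $(2, c^*)$)—a transition forbidden by the absorption of $\{1, 2\}$.

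A parallel \emph{prefix lemma} holds: writing $\col[\leftarrow]{D_0}{r_1} = \{c_1 < \cdots < c_m\}$, the set $\{c_i : (2, c_i) \in T\}$ forms a left-prefix $\{c_1, \ldots, c_k\}$. Dropping $(2, c_i)$ requires every $(2, c_j)$ with $c_j > c_i$ in $\col[\leftarrow]{D_0}{r_1}$ to be absent, forcing those cells to row $1$, where they remain since cells never rise. Conversely, for each $k \in \{0, 1, \ldots, m\}$, I would construct $T_k$ explicitly: first apply Kohnert moves at row $2$ right-to-left, dropping every $(2, c')$ with $c' > c_k$ to row $1$; then process row $3$ right-to-left with appropriate interleaved row-$2$ moves—columns in $\col[]{D_0}{r_1, r_2}$ end in state $\{1, 2\}$, $\col[\leftarrow]{D_0}{r_2}$ cells stop at $(2, c)$, and $\col[\rightarrow]{D_0}{r_2}$ cells continue to $(1, c)$. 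Combining, $|Min(D_0)| = m + 1 = |\col[\leftarrow]{D_0}{r_1}| + 1$. For $r_1 = 1$, cells of row $1$ are immovable so the prefix lemma forces $k = 0$, and the blocking lemma analog still pins $\col[\leftarrow]{D_0}{r_2}$ cells at row $2$; the unique minimum is fully determined and $|Min(D_0)| = 1$.
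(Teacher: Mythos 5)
Your overall architecture---show every minimal element lives in rows $1$ and $2$, pin down the cells in block columns and in columns to the right of the last block, show that cells of $\col[\leftarrow]{D_0}{r_2}$ are stuck in row $2$, show that the row-$1$ cells among $\col[\leftarrow]{D_0}{r_1}$ form a suffix, and then construct one minimal element for each of the $|\col[\leftarrow]{D_0}{r_1}|+1$ admissible suffixes---is exactly the paper's (its structural lemma on $Min(D_0)$ plus the explicit constructions from $\widehat{D}$). The genuine gap is the sentence ``every minimum of $\mathcal{P}(D_0)$ has cells only in rows $1$ and $2$ and is therefore reachable from $\bar{D}_0$, so $Min(D_0)=Min(\bar{D}_0)$.'' Having all cells in rows $1$ and $2$ does not by itself imply membership in $KD(\bar{D}_0)$: many two-row arrays with the correct column multiset are not Kohnert diagrams of $\bar{D}_0$ (any array violating your own blocking or prefix conditions, for instance), and whether a minimal element of $\mathcal{P}(D_0)$ could be such an array is precisely what the theorem must rule out. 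This matters because your two impossibility arguments---the blocking lemma and the prefix lemma, which together supply the upper bound $|Min(D_0)|\le |\col[\leftarrow]{D_0}{r_1}|+1$---are carried out only in the reduced poset, where a block column admits the clean three-state analysis with $\{1,2\}$ absorbing and $\{1,2\}$ the only state lacking a cell in row $3$. For general $r_1<r_2$ a block column passes through many more intermediate configurations, and ``the drop forces column $c^*$ into the absorbing state'' no longer reads off directly. As written, the proof therefore does not exclude additional minimal elements of $\mathcal{P}(D_0)$ lying outside $KD(\bar{D}_0)$.

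There are two ways to close this. Either prove the reduction honestly---show that any $T\in KD(D_0)$ with all cells in rows $1$ and $2$ belongs to $KD(\bar{D}_0)$, which is a nontrivial rearrangement statement about Kohnert move sequences, not a consequence of the row condition alone---or run the impossibility arguments directly in $KD(D_0)$, as the paper does: along a saturated chain from $D_0$ down to the putative minimal element, locate the first covering step $D_2\precdot D_1$ at which the offending cell of column $c$ passes strictly below the relevant cell of the comparison column $c^*>c$, and observe that at that step the moving cell would have to share a row with a cell of column $c^*$, so it is not rightmost in its row and cannot move. The lower-bound half of your argument (the constructions $T_k$ starting from $\bar{D}_0$) is unaffected, since $\bar{D}_0\in KD(D_0)$ by Lemma~\ref{lem:2rbase} and anything built from it lies in $KD(D_0)$ and is minimal there once it is fixed by all Kohnert moves.
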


In order to prove Theorem~\ref{thm:2rnummin}, given a two-row diagram $D$, we first show that elements of $Min(D)$ are completely determined by the positions of the cells in columns of $\col[\leftarrow]{D}{r_1}$.

\begin{lemma}\label{lem:2rminrest}
Consider $D_0=(r_1|c_1^1,\hdots,c_{n_1}^1)\cup(r_2|c_1^2,\hdots,c_{n_2}^2)$ with $r_1<r_2$ and let $D\in KD(D_0)$. Then $D\in Min(D_0)$ if and only if the following conditions hold:
\begin{enumerate}
    \item[\textup{(a)}] all the cells of $D$ are contained in rows 1 and 2;
    \item[\textup{(b)}] if $(r,c)\in D$ with $c\in \col[\rightarrow]{D_0}{r_1}\cup \col[\rightarrow]{D_0}{r_2}$, then $r=1$;
    \item[\textup{(c)}] if $(r,c)\in D$ with $c\in \col[\leftarrow]{D_0}{r_2}$, then $r=2$; and
    \item[\textup{(d)}] if $\col[\leftarrow]{D_0}{r_1}=\{c_1<\hdots<c_n\}$ and $\{(a_1,c_1),\hdots,(a_n,c_n)\}\subset D$, then $2\ge a_1\ge a_2\ge\cdots\ge a_n\ge 1$.
\end{enumerate}
In the case $\col[]{D_0}{r_1,r_2}=\emptyset$, we have $|Min(D_0)|=1$ and the unique minimal diagram has all cells in the first row.
\end{lemma}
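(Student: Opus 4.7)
My plan is to prove the lemma by establishing both implications separately, with the sufficiency direction being relatively short and the necessity direction requiring some bookkeeping on the Kohnert-move history.

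For the sufficiency direction, suppose $D \in KD(D_0)$ satisfies (a)--(d). By (a), all cells lie in rows 1 and 2, so every Kohnert move at a row $r \neq 2$ is trivial. Writing $c^* = \max \col[]{D_0}{r_1,r_2}$, the column count of $2$ at $c^*$ together with (a) forces $(1,c^*),(2,c^*) \in D$, and (b) ensures that $(2,c^*)$ is in fact the rightmost cell of $D$ in row 2. Since $(1,c^*)$ lies directly below it, the Kohnert move at row 2 is also trivial, so $D \in Min(D_0)$. The special case $\col[]{D_0}{r_1,r_2} = \emptyset$ follows by a similar direct check: any cell in row 2 would sit in a non-block column above an empty row-1 slot and hence be droppable, so the unique minimal element must have all cells in row 1 and is clearly reachable from $D_0$ by a greedy sequence of Kohnert moves processing columns from right to left.

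For necessity, conditions (a) and (b) come from elementary manipulations. The rightmost cell in any row $\geq 3$ sits above at most one occupied cell in its column (since each column of $D_0$ has at most two cells), hence can always be dropped; and if $(2,c'') \in D$ with $c'' > c^*$, the rightmost row-2 cell of $D$ lies in a non-block column above an empty row-1 slot, again droppable. Both contradict minimality.

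The core of the argument is (c) and (d), which I propose to handle by a uniform history-tracking technique. Fix any sequence of Kohnert moves $D_0 = D^{(0)} \to \cdots \to D^{(k)} = D$. For (c), suppose toward contradiction that $(1,c_0) \in D$ for some $c_0 \in \col[\leftarrow]{D_0}{r_2}$. Since column $c_0$ contains the single original cell $(r_2,c_0)$ and a singleton column's cell drops exactly one row at a time, this cell descends via Kohnert moves at rows $r_2, r_2-1, \ldots, 2$, and at each such move $(r,c_0)$ must be the rightmost cell in row $r$, so $(r,c^*) \notin D^{(i)}$ at that moment. I will then prove by descending induction on $r$ from $r_2$ down to 2 that, at the instant the Kohnert move at row $r$ acts on column $c_0$, both cells of column $c^*$ lie at rows $\leq r-1$; the inductive step uses only that Kohnert moves never raise a cell, combined with the constraint $(r-1,c^*) \notin$ diagram at the next such step. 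Setting $r=2$ forces both cells of column $c^*$ into row 1, which is impossible since only one cell fits there. A completely parallel induction handles (d): assuming $(1,c_i),(2,c_{i+1}) \in D$ with $c_i < c_{i+1}$ both in $\col[\leftarrow]{D_0}{r_1}$, the single cell of column $c_{i+1}$ is forced to row $\leq r-1$ at each descent step of column $c_i$'s cell, hence to row 1 by the end, contradicting $(2,c_{i+1}) \in D$.

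The main obstacle I anticipate is rigorously maintaining the inductive invariant in (c), because column $c^*$ contains two cells whose motions can interleave in subtle ways (in particular, the upper cell of column $c^*$ can jump over the lower cell once the lower has descended). I will need to verify carefully that arbitrary Kohnert moves applied between consecutive drops of column $c_0$'s cell cannot cause either cell of column $c^*$ to land at the forbidden row $r-1$ just when the next move on column $c_0$ is about to act.
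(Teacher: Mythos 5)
Your proof is correct, and its skeleton --- sufficiency by checking that the only potentially nontrivial move is at row $2$ and that its rightmost cell sits over a filled position, necessity of (a) and (b) by exhibiting a droppable rightmost cell --- coincides with the paper's. Where you genuinely diverge is in (c) and (d). The paper localizes to the \emph{first} move creating the offending inversion: along a sequence of single Kohnert moves from $D_0$ to $D$ there are consecutive diagrams $D_1\succ D_2$ in which the singleton cell of column $c_0$ passes from weakly above to strictly below the upper cell of the block column $c^*$; since a singleton column's cell drops exactly one row per move and only one cell moves at a time, the arithmetic forces that cell and the upper cell of $c^*$ to share a row in $D_1$, so the cell in column $c^*$ (being to the right) blocks the move --- contradiction. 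Your argument instead runs a monotone invariant over the \emph{entire} descent of the singleton cell: at each drop from row $r$ the rightmost-cell condition gives $(r,c^*)\notin D^{(i)}$, and since Kohnert moves never raise cells, both cells of column $c^*$ are squeezed to rows $\le r-1$; at $r=2$ this puts two cells of one column into row $1$, which is impossible. Both are valid; the paper's is more surgical, while yours buys an invariant whose preservation is automatic. In particular, the obstacle you anticipate is not one: the invariant ``both cells of column $c^*$ lie weakly below row $r-1$'' is preserved by arbitrary downward moves regardless of how the two cells interleave or jump over one another, and the emptiness of $(r-1,c^*)$ at the next drop is exactly the rightmost-cell condition for that drop, so no further verification is required. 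One minor bookkeeping remark: a violation of (d) need not occur at adjacent columns of $\col[\leftarrow]{D_0}{r_1}$, but your induction applies verbatim to any pair $c_i<c_j$ with $(1,c_i),(2,c_j)\in D$, so nothing is lost.
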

\begin{proof} We first show that if $D\in Min(D_0)$, then it satisfies the properties stated in the lemma.
\begin{enumerate} 
    \item[\textup{(a)}] By contradiction, assume that there exists $D_{min}\in KD(D_0)$ and $r>2$ such that $(r,c)\in D_{min}$. Without loss of generality, assume that $c$ is maximal so that for all $\tilde{c}>c$, any cells in column $\tilde{c}$ of $D_{min}$ are contained in rows 1 or 2. Consequently, $(r,c)$ is the rightmost cell in row $r$ of $D_{min}$. Moreover, since there are at most two cells in column $c$ of $D_{min}$, there must exist an empty position below $(r,c)$ in $D_{min}$. Therefore, $D_{min}$ is not fixed by the Kohnert move at row $r$, i.e., $D_{min}$ is not minimal, which is a contradiction. The result follows.
    
    \item[\textup{(b)}] By part (a), we know that if $(r,c)\in D$ with $c\in \col[\rightarrow]{D_0}{r_1}\cup \col[\rightarrow]{D_0}{r_2}$, then $r=1$ or $2$. Assume that there exists $c\in \col[\rightarrow]{D_0}{r_1}\cup \col[\rightarrow]{D_0}{r_2}$ such that $(2,c)\in D$. Moreover, assume that $(2,c)$ is rightmost in row 2. Since $c\in \col[\rightarrow]{D_0}{r_1}\cup \col[\rightarrow]{D_0}{r_2}$, it follows that $(1,c)\notin D$. However, this implies that we can apply a Kohnert move at row $2$ and obtain a new diagram $D'=D\ldownarrow^{(2,c)}_{(1,c)}$. Thus,  $D\notin Min(D_0)$, which is a contradiction. 
    
    \item[\textup{(c)}] Assume that there exists $(r,c)\in D$ such that $c\in \col[\leftarrow]{D_0}{r_2}$ and $r=1$. Let $c^*=\max \col[]{D_0}{r_1,r_2}$. Then $c<c^*$. Note that $(r_2,c),(r_1,c^*),(r_2,c^*)\in D_0$. Consequently, there must exist two diagrams $D_1,D_2\in KD(D_0)$ such that $D\preceq D_2\precdot D_1\preceq D_0$, $(r',c),(r_1',c^*),(r_2',c^*)\in D_1$ with $r'\ge r_2'>r_1'$, and $(r'',c),(r_1'',c^*),(r_2'',c^*)\in D_2$ with $r''<r_2''>r_1''$. Since diagrams of $KD(D_0)$ have exactly one cell in column $c$ and Kohnert moves result in at most one cell moving to a lower row, it must be the case that $r_2'=r_2''$, $D_2$ is formed from $D_1$ by applying a Kohnert move at row $r'$, and $D_2=D_1\ldownarrow^{(r',c)}_{(r'-1,c)}$. However, since $r''=r'-1<r''_2=r_2'\le r'$, we have $r_2'=r'$ so that $(r',c^*)\in D_1$; but this implies that applying a Kohnert move at row $r'$ of $D_1$ cannot affect the cell $(r',c)$, which is a contradiction, and we conclude that no such column $c$ can exist.
    
    \item[\textup{(d)}] Assume that there exist columns $c_1,c_2\in \col[\leftarrow]{D_0}{r_1}$ with $c_1<c_2$ such that $(a_1,c_1),(a_2,c_2)\in D$ and $a_1<a_2$. Since $(r_1,c_1),(r_1,c_2)\in D_0$,  there must exist diagrams $D_1$ and $D_2$ such that $D \preceq D_2\precdot D_1\preceq D_0$, $(r'_1,c_1),(r'_2,c_2)\in D_1$ with $r'_1\ge r_2'$, and $(r''_1,c_1),(r''_2,c_2)\in D_2$ with $r''_1< r_2''$. Arguing as in part (c), we get a contradiction and conclude that no such pair of columns $c_1$ and $c_2$ can exist.
\end{enumerate}

For the other direction, consider a diagram $D\in KD(D_0)$ satisfying the properties of the lemma. Then all cells of $D$ are in rows 1 and 2, and any cells in row 2 either have cells to their right in the same row or have a cell below in row 1. Moreover, the rightmost cell in row 2 of $D$ has a cell below in row 1. Consequently, no cells of $D$ are affected by applying a Kohnert move at row 2. Thus, $D\in Min(D_0)$.
\end{proof}

We are now ready to prove Theorem~\ref{thm:2rnummin}. 

\begin{proof}[Proof of Theorem~\ref{thm:2rnummin}]
By Lemma~\ref{lem:2rminrest}, any element of $Min(D_0)$ is completely determined by the rows occupied by the cells in columns of $\col[\leftarrow]{D_0}{r_1}$. Thus, the case $r_1=1$ follows immediately. 

Now, let's assume that $r_1>1$. Applying Lemma~\ref{lem:2rminrest}, for $D\in Min(D_0)$, the cells in columns of $\col[\leftarrow]{D_0}{r_1}$ of $D$ are in rows 1 and 2, and appear in weakly decreasing rows from left to right. Consequently, $|Min(D_0)|\le |\col[\leftarrow]{D_0}{r_1}|+1$. Thus, it suffices to show that $|Min(D_0)|\ge |\col[\leftarrow]{D_0}{r_1}|+1$.  We do so by constructing a diagram $D_{t+1}\in Min(D_0)$ with exactly $t$ cells in columns of $\col[\leftarrow]{D_0}{r_1}$ occupying row 1 for $0\le t\le |\col[\leftarrow]{D_0}{r_1}|+1$.

By Lemma~\ref{lem:2rbase}, we have that $$\widehat{D}=(2|c_1^1,\hdots,c_{n_1}^1)\cup(3|c_1^2,\hdots,c_{n_2}^2)\in KD(D_0).$$ We construct $D_{t+1}$ for $0\le t\le |\col[\leftarrow]{D_0}{r_1}|+1$ from $\widehat{D}$ in the following way. Let $k = \left| \col[\leftarrow]{D_0}{r_1}\right|$ and $\col[\leftarrow]{D_0}{r_1}=\left\{c_{i_1}<c_{i_2}<\dots<c_{i_k}\right\}$. For $0\le t\le k$, we define
$$n(t)=\begin{cases}
            |\{c~|~c_{i_{k-t}}<c\in \col[]{D_0}{r_1}\cup\col[]{D_0}{r_1,r_2}\}|, & 0\le t<k \\
            |\{c~|~c_{i_{1}}\le c\in \col[]{D_0}{r_1}\cup\col[]{D_0}{r_1,r_2}\}|, & t=k.
    \end{cases}$$
Then, for $0\le t\le k$, to form $D_{t+1}$ from $\widehat{D}$, we successively apply
\begin{enumerate}
    \item[1)] $n(t)$ Kohnert moves at row 2,
     \item[2)] $|\col[]{D_0}{r_2}\cup\col[]{D_0}{r_1,r_2}|$ Kohnert moves at row 3, and
    \item[3)] $|\col[\rightarrow]{D_0}{r_2}|$ Kohnert moves at row 2. 
\end{enumerate}
Note that our procedure for forming $D_{t+1}$ from $\widehat{D}$ proceeds by
\begin{enumerate}
    \item[1)] moving the rightmost $t$ cells in columns of $\col[\leftarrow]{\widehat{D}}{2}$ and all cells in row 2 to the right of such cells down to row 1,
    \item[2)] moving all cells in columns $\col[]{\widehat{D}}{3}$ down to row 2 and arranging for all cells of columns $\col[]{\widehat{D}}{2,3}$ to occupy rows 1 and 2, and finally
    \item[3)] moving all cells in row 2 and columns $\col[\rightarrow]{\widehat{D}}{3}$ down to row 1.
\end{enumerate}
By construction, $D_{t+1}$ has exactly $t$ cells in columns of $\col[\leftarrow]{D_0}{r_1}$ occupying row 1, and, considering Lemma~\ref{lem:2rminrest}, $D_{t+1}\in Min(D_0)$. The result follows.
\end{proof}

As a consequence of Theorem~\ref{thm:2rnummin}, we are immediately led to the following characterization for Kohnert posets associated with two-row diagrams which are bounded.

\begin{theorem}\label{thm:2rmin}
Let $D_0=(r_1|c_1^1,\hdots,c_{n_1}^1)\cup(r_2|c_1^2,\hdots,c_{n_2}^2)$ with $r_1<r_2$. Then $\mathcal{P}(D_0)$ is bounded if and only if either $r_1=1$, or $r_1>1$ and $|\col[\leftarrow]{D_0}{r_1}|=0$.
\end{theorem}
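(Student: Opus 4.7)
The plan is to derive this characterization as an immediate corollary of Theorem~\ref{thm:2rnummin} combined with Lemma~\ref{lem:bmum}. Since the heavy lifting has already been done in establishing the exact count of minimal elements, the proof here should amount to little more than reading off the two cases from that formula and identifying when the count equals $1$.

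More concretely, I would begin by invoking Lemma~\ref{lem:bmum}, which tells us that $\mathcal{P}(D_0)$ is bounded if and only if $|Min(D_0)| = 1$ (recall that the unique maximal element is always $D_0$ itself). Then I would split into the two cases of Theorem~\ref{thm:2rnummin}. If $r_1 = 1$, then Theorem~\ref{thm:2rnummin} gives $|Min(D_0)| = 1$ directly, so $\mathcal{P}(D_0)$ is bounded. If $r_1 > 1$, then Theorem~\ref{thm:2rnummin} gives $|Min(D_0)| = |\col[\leftarrow]{D_0}{r_1}| + 1$, and this equals $1$ if and only if $|\col[\leftarrow]{D_0}{r_1}| = 0$.

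Combining the two cases yields exactly the stated biconditional: $\mathcal{P}(D_0)$ is bounded if and only if either $r_1 = 1$, or $r_1 > 1$ with $|\col[\leftarrow]{D_0}{r_1}| = 0$. Since the argument is purely a substitution into the formula of Theorem~\ref{thm:2rnummin}, there is no real obstacle to overcome; the actual content of the characterization has already been secured in the proofs of Lemma~\ref{lem:2rminrest} and Theorem~\ref{thm:2rnummin}.
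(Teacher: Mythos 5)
Your proposal is correct and matches the paper exactly: the paper presents Theorem~\ref{thm:2rmin} as an immediate consequence of the count in Theorem~\ref{thm:2rnummin}, combined (implicitly) with Lemma~\ref{lem:bmum}, which is precisely the substitution argument you give. Nothing further is needed.
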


\begin{remark}\label{rem:2rrank}
    Let $D_0$ be a two-row diagram and $D_{min}$ be the minimal element of $\mathcal{P}(D_0)$ for which all cells in columns of $\col[\leftarrow]{D_0}{r_1}$ are in row 1. Then, as a consequence of the proof of Theorem~\ref{thm:2rnummin}, we have that $b(D_0)=rowsum(D_{min})$; this can also be rewritten as $$b(D_0)=|\col[]{D_0}{r_1}|+(1+\delta_{|Col(D_0;r_1,r_2)|>0})|\col[\leftarrow]{D_0}{r_2}|+|\col[\rightarrow]{D_0}{r_2}|+3|\col[]{D_0}{r_1,r_2}|.$$ 
\end{remark}

Next, we characterize when the Kohnert posets of two-row diagrams are ranked. Our first result considers the case when the initial diagram has cells in the first row.

\begin{lemma}\label{lem:2rgn1}
Let $D_0=(1|c_1^1,\hdots,c^1_{n_1})\cup(r_2|c_1^2,\hdots,c_{n_2}^2)$ with $r_2>1$. Then $\mathcal{P}(D_0)$ is ranked with a rank function given by
 $$rk(D)=rowsum(D)-b(D_0)$$ 
 for $D\in KD(D_0)$.
\end{lemma}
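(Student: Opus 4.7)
The plan is to verify directly that $rk(D)=rowsum(D)-b(D_0)$ satisfies the two conditions of a rank function on $\mathcal{P}(D_0)$ from Section~\ref{subsec:KPos}: (i) if $D'\prec D$ then $rk(D')<rk(D)$, and (ii) if $D'\precdot D$ then $rk(D)=rk(D')+1$. Condition (i) is immediate: $D'\prec D$ means $D'$ arises from $D$ by a sequence of nontrivial Kohnert moves, each of which sends a cell to a strictly lower row, so $rowsum$ strictly decreases. Moreover, for any $D\in KD(D_0)$ there is some $T\in Min(D_0)$ with $T\preceq D$, so $rowsum(D)\ge rowsum(T)\ge b(D_0)$, showing $rk(D)\in\mathbb{Z}_{\ge 0}$.

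The substance of the lemma lies in (ii). By the forward implication of Remark~\ref{rem:cover}, any cover relation $D'\precdot D$ comes from a single nontrivial Kohnert move, so it suffices to show that such a move drops a cell by exactly one row. The key structural observation, which uses the hypothesis $r_1=1$, is that each column of any $D\in KD(D_0)$ contains at most two cells, and whenever it contains two of them, one must occupy row~$1$. Indeed, every column of $D_0$ has at most two cells by construction, Kohnert moves preserve column multiplicities, and any cell already in row~$1$ cannot be moved downward. Consider then a nontrivial Kohnert move at row $r$ of $D$ that relocates the rightmost cell $(r,c)$ to the first empty position below. Either column $c$ of $D$ contains only $(r,c)$, in which case row $r-1$ is empty; or column $c$ of $D$ also contains $(1,c)$, in which case the move can be nontrivial only when $r>2$, and then rows $2,3,\ldots,r-1$ of column $c$ are empty, so the cell still lands at row $r-1$. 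In both sub-cases the cell drops by exactly one row, so $rowsum(D)-rowsum(D')=1$, and hence $rk(D)=rk(D')+1$.

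The only real obstacle to watch for is that, in general, a single Kohnert move can drop a cell by more than one row by jumping over intermediate cells, as in Example~\ref{ex:KDD}; this is precisely what would prevent $rk$ from incrementing by exactly one across covers. The restriction $r_1=1$ rules this out, because the only other cell a moving cell can encounter in its column sits in row~$1$, which is never an intermediate row for the drop.
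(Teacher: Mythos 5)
Your proof is correct and follows essentially the same route as the paper's: the crux in both is that when $r_1=1$ every two-cell column of any $D\in KD(D_0)$ keeps one cell in row~$1$, so every nontrivial Kohnert move drops a cell by exactly one row and $rowsum$ decreases by exactly $1$ across covers. Your write-up just makes explicit a couple of points the paper leaves implicit (why blocks stay anchored in row~$1$, and why $rk\ge 0$).
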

\begin{proof}
By Theorem~\ref{thm:2rmin}, $\mathcal{P}(D_0)$ has a unique minimal element $D_{min}$. Moreover, by the definition of $b(D_0)$, we have $rk(D_{min})=0$ and $rk(D)>0$ for all other $D\in KD(D_0)$. 

Now, let $D\in KD(D_0)$ with $D\neq D_{min}$. Then $rk(D)>0$ and we can apply a Kohnert move to some row $r>1$ in $D$, resulting in a new diagram $D'$. Note that in $D$, all blocks have one cell in the first row. Consequently, there is a column $c$ such that $(r,c)\in D$, $(r-1,c)\notin D$, and $D'=D\ldownarrow^{(r,c)}_{(r-1,c)}$. Thus, $rowsum(D')=rowsum(D)-1$ so that $rk(D')=rk(D)-1$; that is, the rank function decreases by one every time we apply a nontrivial Kohnert move. The result follows.
\end{proof}

Next, we want to find restrictions for the case when $r_1>1$.

\begin{lemma}\label{lem:2rgn2}
Let $D_0=(r_1|c_1^1,\hdots,c^1_{n_1})\cup(r_2|c_1^2,\hdots,c_{n_2}^2)$ with $1<r_1<r_2$. Suppose that either
\begin{itemize}
    \item[\textup{(a)}] $|\col[\rightarrow]{D_0}{r_1}\cup \col[\rightarrow]{D_0}{r_2}|>0$, i.e., $D_0$ has a nonempty column to the right of the rightmost block; or
    \item[\textup{(b)}] $|\col[\rightarrow]{D_0}{r_1}\cup \col[\rightarrow]{D_0}{r_2}|=0$ and $|\col[]{D_0}{r_1,r_2}|>1$, i.e., $D_0$ has more than one block and no cells to the right of the rightmost block.
\end{itemize}
Then $\mathcal{P}(D_0)$ is not ranked.
\end{lemma}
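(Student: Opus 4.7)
The plan is to invoke Corollary~\ref{cor:ranked} in both cases by producing a diagram $D \in KD(D_0)$ that realizes one of the two forbidden subdiagram patterns from that corollary. Since $1 < r_1 < r_2$, Lemma~\ref{lem:2rbase} places $\widehat{D} := (2 | c_1^1,\hdots,c_{n_1}^1)\cup(3 | c_1^2,\hdots,c_{n_2}^2)$ inside $KD(D_0)$, so it is enough to construct $D$ as a descendant of $\widehat{D}$ via an explicit sequence of Kohnert moves. Throughout, let $c^* = \max \col[]{D_0}{r_1,r_2}$ be the rightmost block column, which exists in both cases by hypothesis. The main technical obstacle will be to verify at each step that the cell we wish to move is genuinely the rightmost cell in its row and that the distinguished block we need to preserve is never disturbed; both points will follow from processing nonempty columns in a strict right-to-left order.

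For case~\textup{(a)}, let $c_2$ be the leftmost column of $D_0$ with $c_2 > c^*$, which exists because $|\col[\rightarrow]{D_0}{r_1}\cup\col[\rightarrow]{D_0}{r_2}|>0$. Starting from $\widehat{D}$, process the nonempty columns $u$ of $D_0$ with $u > c_2$ from right to left: apply a single Kohnert move at row $2$ if the cell of column $u$ lies in row $2$ of $\widehat{D}$, or a Kohnert move at row $3$ followed by one at row $2$ if it lies in row $3$; the net effect is to drop that cell down to row $1$. Finally, if the cell in column $c_2$ lies in row $3$ of $\widehat{D}$, apply one more Kohnert move at row $3$ to drop it to row $2$, and let $D$ be the resulting diagram. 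Taking $r^* = 1$ and $c_1 = c^*$, conditions (i)--(iv) of Corollary~\ref{cor:ranked}\textup{(a)} then hold: the untouched block at $c^*$ delivers $(2,c^*),(3,c^*) \in D$ and forces $(1,c^*)\notin D$; by construction $(2,c_2)\in D$ and $(1,c_2)\notin D$; by minimality of $c_2$ no cells occupy columns strictly between $c^*$ and $c_2$; and every column strictly to the right of $c_2$ has had its cell dropped to row~$1$.

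For case~\textup{(b)}, let $c' = \max(\col[]{D_0}{r_1,r_2}\setminus\{c^*\})$, which exists since $|\col[]{D_0}{r_1,r_2}|>1$. Starting from $\widehat{D}$, first apply a single Kohnert move at row $2$ to drop $(2,c^*)$ to $(1,c^*)$: the case~\textup{(b)} hypothesis forbids cells strictly to the right of $c^*$, so $(2,c^*)$ is the rightmost cell in row $2$ and $(1,c^*)$ is empty. Then, for each column $u\in \col[\leftarrow]{D_0}{r_1}$ with $c' < u < c^*$, processed from right to left, apply a Kohnert move at row $2$ to drop $(2,u)$ to $(1,u)$; call the resulting diagram $D$. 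Taking $r^* = 1$, $c_1 = c'$, $c_2 = c^*$, conditions (i)--(iv) of Corollary~\ref{cor:ranked}\textup{(b)} are satisfied: the untouched block at $c'$ gives $(2,c'),(3,c') \in D$ and $(1,c')\notin D$; $(1,c^*),(3,c^*) \in D$ by construction; condition (ii) is vacuous because $D_0$ has no cells strictly to the right of $c^*$; and condition (iii) holds because every row-$2$ cell in a column strictly greater than $c'$ has been dropped to row~$1$.
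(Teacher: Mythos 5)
Your proposal is correct and follows essentially the same route as the paper's proof: normalize to rows $2$ and $3$ via Lemma~\ref{lem:2rbase}, then construct an explicit descendant realizing the pattern of Corollary~\ref{cor:ranked}(a) (resp.\ (b)) with the rightmost block and the leftmost column to its right (resp.\ the two rightmost blocks) playing the roles of $c_1$ and $c_2$. The only differences are cosmetic --- you drop the cells column by column from right to left rather than in the paper's batched move counts, and in case (a) you handle both placements of the critical column's cell uniformly instead of via the paper's ``without loss of generality''.
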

\begin{proof}
We analyze each case separately, finding diagrams in $KD(D_0)$ that contain a subdiagram of one of the two forms described in Corollary~\ref{cor:ranked}.
\begin{itemize}
\item[(a)] 
Let $c=\max~\col[]{D_0}{r_1,r_2}$ and let $c'=\min \col[\rightarrow]{D_0}{r_1}\cup \col[\rightarrow]{D_0}{r_2}$. Without lost of generality, assume that $c'\in \col[\rightarrow]{D_0}{r_2}$; the case $c'\in \col[\rightarrow]{D_0}{r_1}$ following via a similar argument. We claim that there exists $D\in KD(D_0)$ which contains a subdiagram of the form described in Corollary~\ref{cor:ranked}~(a). 
By Lemma~\ref{lem:2rbase}, $\widehat{D}=(2|c_1^1,\hdots,c^1_{n_1})\cup(3|c_1^2,\hdots,c_{n_2}^2)\in KD(D_0)$. Form $D$ from $\widehat{D}$ by successively applying
\begin{enumerate}
    \item[1)] $|\col[\rightarrow]{D_0}{r_1}|$ Kohnert moves at row 2,
    \item[2)] $|\col[\rightarrow]{D_0}{r_2}|-1$ Kohnert moves at row $3$,
    \item[3)] $|\col[\rightarrow]{D_0}{r_2}|-1$ Kohnert moves at row 2, and finally
    \item[4)] one Kohnert move at row 3.
\end{enumerate}
Note that steps 1, 2, and 3 together move all cells in columns of $\col[\rightarrow]{D_0}{r_1}$ and all but the leftmost cell in columns of $\col[\rightarrow]{D_0}{r_2}$ down to row 1; and step 4 moves the leftmost cell in columns of $\col[\rightarrow]{D_0}{r_2}$ down to row 2. Now, by construction, $D$ contains a subdiagram of the form described in Corollary~\ref{cor:ranked} with $r^*=1$, $c_1=c$, and $c_2=c'$, establishing the claim. Applying Corollary~\ref{cor:ranked}, the result follows.

\item[(b)] Let $c,c'$ be the two largest values in $\col[]{D_0}{r_1,r_2}$ with $c<c'$. We claim that there exists $D\in KD(D_0)$ which contains a subdiagram of the form described in Corollary~\ref{cor:ranked}~(b). By Lemma~\ref{lem:2rbase}, $\widehat{D}=(2|c_1^1,\hdots,c^1_{n_1})\cup(3|c_1^2,\hdots,c_{n_2}^2)\in KD(D_0)$. Form $D$ from $\widehat{D}$ by applying $$|\{\tilde{c}\in \col[\rightarrow]{D}{r_1}~|~c<\tilde{c}<c'\}|+1$$ Kohnert moves at row 2. Note that we form $D$ by moving all cells in row 2 and strictly to the left of column $c$ down to row 1. Now, by construction, $D$ contains a subdiagram of the form described in Corollary~\ref{cor:ranked}~(b) with $r^*=1$, $c_1=c$, and $c_2=c'$, establishing the claim. Applying Corollary~\ref{cor:ranked}, the result follows.
\end{itemize}
\end{proof}

It turns out that the necessary conditions of Lemma~\ref{lem:2rgn2} for a Kohnert poset associated with a two-row diagram to be ranked are, in fact, also sufficient.

\begin{theorem}\label{thm:2rrank}
Let $D_0=(r_1|c_1^1,\hdots,c^1_n)\cup(r_2|c_1^2,\hdots,c_m^2)$. Then  $\mathcal{P}(D_0)$ is ranked if and only if either
\begin{itemize}
    \item[\textup{(a)}] $r_1=1$ or
    \item[\textup{(b)}] $r_1>1$, $|\col[\rightarrow]{D_0}{r_1}\cup\col[\rightarrow]{D_0}{r_2}|=0$, and $|\col[]{D_0}{r_1,r_2}|\le 1$.
\end{itemize}
Moreover, for $D\in KD(D_0)$, a rank function is given by
$$rk(D)=rowsum(D)-b(D_0).$$
\end{theorem}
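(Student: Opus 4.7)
The plan is as follows. For the ``only if'' direction, I argue by contrapositive: suppose that neither (a) nor (b) holds, so $r_1>1$ and either $|\col[\rightarrow]{D_0}{r_1}\cup\col[\rightarrow]{D_0}{r_2}|>0$ or $|\col[]{D_0}{r_1,r_2}|>1$. These are precisely the hypotheses of parts (a) and (b) of Lemma~\ref{lem:2rgn2}, respectively, so $\mathcal{P}(D_0)$ is not ranked.

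For the ``if'' direction, case (a) is exactly Lemma~\ref{lem:2rgn1}. In case (b), if $|\col[]{D_0}{r_1,r_2}|=0$ then every column of $D_0$ contains at most one cell and Theorem~\ref{thm:obpcrank} applies directly, giving the asserted rank function. The main sub-case is $|\col[]{D_0}{r_1,r_2}|=1$; let $c^*$ denote the unique block column, which by hypothesis is also the rightmost nonempty column of $D_0$. Note that the poset need not be bounded here, but I will still show that $rk(D)=rowsum(D)-b(D_0)$ defines a rank function. Nonnegativity of $rk$ is automatic because any $D\in KD(D_0)$ dominates some $T\in Min(D_0)$, yielding $rowsum(D)\ge rowsum(T)\ge b(D_0)$, and strict monotonicity along $\prec$ is automatic because each nontrivial Kohnert move strictly decreases $rowsum$.

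The main obstacle is to verify that every cover relation $D_2\precdot D_1$ satisfies $rowsum(D_1)-rowsum(D_2)=1$. Using Remark~\ref{rem:cover}, write $D_2=D_1\ldownarrow^{(r,c)}_{(r',c)}$ for a single nontrivial Kohnert move, so that the rowsum difference equals $r-r'$. Assume for contradiction that $r-r'\ge 2$; then the moved cell jumped over another cell in column $c$, forcing column $c$ to contain two cells of $D_1$. Since no column of any diagram in $KD(D_0)$ can contain more cells than the corresponding column of $D_0$, we must have $c=c^*$, the two cells of column $c^*$ in $D_1$ must sit at $(r-1,c^*)$ and $(r,c^*)$, and consequently $r'=r-2$. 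I then construct the intermediate diagram $D^*:=D_1\ldownarrow^{(r-1,c^*)}_{(r-2,c^*)}$; this is a valid nontrivial Kohnert move because the absence of cells to the right of $c^*$ makes $(r-1,c^*)$ the rightmost cell in row $r-1$ of $D_1$, and $(r-2,c^*)$ is empty as the only two cells of column $c^*$ lie in rows $r-1$ and $r$. A subsequent Kohnert move at row $r$ of $D^*$ then sends $(r,c^*)$ to the now-empty $(r-1,c^*)$, producing exactly $D_2$. Hence $D_2\prec D^*\prec D_1$, contradicting the cover relation, so $r-r'=1$ and $rk$ is a rank function.
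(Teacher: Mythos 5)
Your proposal is correct and follows essentially the same route as the paper: the ``only if'' direction via Lemma~\ref{lem:2rgn2}, case (a) via Lemma~\ref{lem:2rgn1}, and for case (b) the same argument that a cover relation dropping a cell by two rows must occur in the unique (rightmost) block column, so that an intermediate diagram $D^*$ can be inserted to contradict the cover. The only cosmetic difference is that you treat the sub-case $|\col[]{D_0}{r_1,r_2}|=0$ separately via Theorem~\ref{thm:obpcrank}, whereas the paper absorbs it implicitly (no block means no jump of size two can occur).
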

\begin{proof}
The forward direction follows by a combination of Lemmas~\ref{lem:2rgn1} and~\ref{lem:2rgn2}. For the backward direction, note that if $r_1=1$, then the result follows by Lemma~\ref{lem:2rgn1}. Thus, assume that $r_1>1$. Note that, considering Remark~\ref{rem:2rrank}, if we let $D_{min}\in Min(D_0)$ be the minimal element such that all cells contained in columns $\col[\leftarrow]{D_0}{r_1}$ are in the first row, then $rk(D_{min})=0$ and $rk(D)>0$ for all other $D\in KD(D_0)$. To establish the result, it suffices to show that 
\begin{enumerate}
    \item if $D_1,D_2\in KD(D_0)$ satisfy $D_2\prec D_1$, then $rk(D_1)>rk(D_2)$; and
    \item if $rk(D_1)-rk(D_2)>1$, then $D_2\prec D_1$ is not a cover relation.
\end{enumerate}
\noindent
The fact that $D_2\prec D_1$ implies $rk(D_1)>rk(D_2)$ follows immediately from the definition of $rk$ and the fact that nontrivial Kohnert moves result in cells moving to lower rows. Thus, for a contradiction, assume that there exists $D_1,D_2\in KD(D_0)$ such that $D_2\precdot D_1$ and $rk(D_1)-rk(D_2)>1$. Since $D_2\precdot D_1$, the diagrams $D_1$ and $D_2$ are related by a single Kohnert move, say at row $r$. Assume that $D_2=D_1\ldownarrow^{(r,c)}_{(r',c)}$. As $D_0$ is a two-row diagram and $rk(D_1)-rk(D_2)>1$, it follows that $(r-1,c)\in D_1$ and $r'=r-2$. Thus, $(r,c)$ belongs to the unique block of $D_1$. By assumption, columns $\tilde{c}>c$ are empty in $D_1$. Consequently, $\tilde{D}:=D_1\ldownarrow^{(r-1,c)}_{(r-2,c)}\in KD(D_0)$ satisfies $$D_2=\tilde{D}\ldownarrow^{(r,c)}_{(r-1,c)}\prec \tilde{D}\prec D_1,$$ i.e., $D_2\prec D_1$ is not a cover relation, a contradiction. Therefore, $rk$ forms a rank function on $\mathcal{P}(D_0)$ and the result follows.
\end{proof}

Combining Theorems~\ref{thm:2rmin} and Theorem~\ref{thm:2rrank} we are led to the following.
\begin{corollary}
    Suppose that $D_0$ is a two-row diagram. If $\mathcal{P}(D_0)$ is ranked, then $\mathcal{P}(D_0)$ is also bounded. Moreover, the converse is not true in general.
\end{corollary}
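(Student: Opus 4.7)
The plan is to deduce the forward direction by juxtaposing the characterizations in Theorems~\ref{thm:2rmin} and~\ref{thm:2rrank}, and to exhibit an explicit diagram witnessing the failure of the converse.

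For the forward direction, I would assume $\mathcal{P}(D_0)$ is ranked and apply Theorem~\ref{thm:2rrank}: either (a) $r_1=1$, or (b) $r_1>1$ together with $|\col[\rightarrow]{D_0}{r_1}\cup\col[\rightarrow]{D_0}{r_2}|=0$ and $|\col[]{D_0}{r_1,r_2}|\le 1$. In case (a), Theorem~\ref{thm:2rmin} immediately gives boundedness. In case (b), I would split on the block count. When $|\col[]{D_0}{r_1,r_2}|=0$, the closing clause of Lemma~\ref{lem:2rminrest} yields $|Min(D_0)|=1$, and boundedness follows from Lemma~\ref{lem:bmum}. When $|\col[]{D_0}{r_1,r_2}|=1$, one must argue that $|\col[\leftarrow]{D_0}{r_1}|=0$ and then invoke Theorem~\ref{thm:2rmin}; the candidate approach is to adapt the construction in the proof of Theorem~\ref{thm:2rnummin}, showing that a column containing only a row-$r_1$ cell strictly to the left of the unique block would allow building multiple minimal elements, contradicting boundedness forced by rank-function uniqueness at the minimum.

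To witness the failure of the converse, I would present the diagram $D_0=(2|1,2)\cup(3|1,2,3)$. Here $\col[]{D_0}{r_1,r_2}=\{1,2\}$, so $|\col[]{D_0}{r_1,r_2}|=2$ and Theorem~\ref{thm:2rrank} gives that $\mathcal{P}(D_0)$ is not ranked. At the same time $\col[\leftarrow]{D_0}{r_1}=\emptyset$, so Theorem~\ref{thm:2rmin} yields that $\mathcal{P}(D_0)$ is bounded. This exhibits a two-row diagram whose Kohnert poset is bounded but not ranked.

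The main obstacle I anticipate is the one-block sub-case of (b) in the forward direction: namely, showing that the combination of a single block and the absence of right columns genuinely precludes a row-$r_1$-only column to the left of that block. The structural nature of $\col[\leftarrow]{D_0}{r_1}$ is not constrained by the ranked hypotheses of Theorem~\ref{thm:2rrank} in any immediately visible way, so a dedicated argument---plausibly one that constructs, from any such left-only column, two distinct elements of $Min(D_0)$ via the sequence-of-Kohnert-moves recipe in the proof of Theorem~\ref{thm:2rnummin}---will be required to close the gap.
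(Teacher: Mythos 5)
Your overall strategy---deducing the forward direction by combining Theorems~\ref{thm:2rmin} and~\ref{thm:2rrank}, and refuting the converse with an explicit diagram---is exactly the paper's approach (the paper offers nothing beyond ``combining'' those two theorems), and your counterexample $(2|1,2)\cup(3|1,2,3)$ for the converse is correct. However, the obstacle you flag in the one-block subcase is not a gap that a more careful argument can close: the implication you need there is false. Take $D_0=(2|1,2)\cup(3|2)$, i.e., $D_0=\{(2,1),(2,2),(3,2)\}$. Then $r_1=2>1$, $\col[]{D_0}{r_1,r_2}=\{2\}$, and $\col[\rightarrow]{D_0}{r_1}\cup\col[\rightarrow]{D_0}{r_2}=\emptyset$, so condition (b) of Theorem~\ref{thm:2rrank} holds and $\mathcal{P}(D_0)$ is ranked; one can also check directly that $KD(D_0)$ has five elements and that $rowsum(D)-4$ is a rank function. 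Yet $\col[\leftarrow]{D_0}{r_1}=\{1\}$, so Theorem~\ref{thm:2rnummin} gives $|Min(D_0)|=2$---concretely, both $\{(2,1),(2,2),(1,2)\}$ and $\{(1,1),(1,2),(2,2)\}$ are fixed by every Kohnert move---and $\mathcal{P}(D_0)$ is not bounded. So the forward direction of the corollary is actually inconsistent with the paper's own Theorems~\ref{thm:2rnummin} and~\ref{thm:2rrank}, and no proof of it by combining them can exist.

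This also shows why your proposed repair cannot succeed: you hope to show that a row-$r_1$-only column to the left of the unique block forces several minimal elements and that this contradicts rankedness, but rankedness as defined in the paper (existence of a rank function compatible with covers) does not force a unique minimal element. In the example above the two minimal elements are incomparable and simply receive ranks $0$ and $1$, violating no axiom. The statement could only be salvaged under a stronger reading of ``ranked'' (for instance, requiring all maximal chains to have equal length, which does fail for $(2|1,2)\cup(3|2)$, whose maximal chains have lengths $2$ and $3$), but that is neither the definition the paper adopts nor the property that Theorem~\ref{thm:2rrank} establishes.
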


\section{Key diagrams}\label{sec:cd}

In this section, we consider Kohnert posets associated with certain diagrams defined by weak compositions, called ``key diagrams''. As mentioned in the introduction, these are the diagrams whose Kohnert polynomials are Demazure characters.

Let $\mathbf{a}=(a_1,\hdots,a_n)\in \mathbb{Z}_{\ge 0}^n$ be a weak composition. The \textbf{key diagram} corresponding to $\mathbf{a}$, denoted $\mathbb{D}(\mathbf{a})$, is the diagram containing $a_i$ left justified cells in row $i$ for $1\le i\le n$; that is
$$\mathbb{D}(\mathbf{a})=\bigcup_{i=1}^n\{(i,j)~|~1\le j\le a_i\}.$$

Given a weak composition $\mathbf{a}$, $\mathcal{P}\left(\mathbb{D}(\mathbf{a})\right)$ has a unique minimal element as a direct consequence of Proposition~\ref{prop:nummingen}.

\begin{theorem}\label{thm:uniqminkey}
For any weak composition $\mathbf{a}=(a_1,a_2,\hdots,a_n)$, the poset $\mathcal{P}(\mathbb{D}(\mathbf{a}))$ is bounded.
\end{theorem}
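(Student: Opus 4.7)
The plan is to show that the key diagram $\mathbb{D}(\mathbf{a})$ has a column-count sequence that is weakly decreasing from left to right, and then invoke Proposition~\ref{prop:nummingen} directly. First I would make precise how the column counts of $\mathbb{D}(\mathbf{a})$ are determined by $\mathbf{a}$. Since $\mathbb{D}(\mathbf{a})$ places $a_i$ cells, left justified, in row $i$, the cell $(i,j)$ lies in $\mathbb{D}(\mathbf{a})$ if and only if $1 \le j \le a_i$. Consequently, the number of cells in column $j$ is
$$b_j = |\{i \in \{1,\ldots,n\} : a_i \ge j\}|.$$

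Next, I would observe that these column counts are weakly decreasing. For any $j < j'$, the condition $a_i \ge j'$ is strictly stronger than $a_i \ge j$, so $\{i : a_i \ge j'\} \subseteq \{i : a_i \ge j\}$, giving $b_{j'} \le b_j$. In particular, writing $N$ for the largest index $j$ with $b_j > 0$ (so that $N = \max_i a_i$), we obtain $b_1 \ge b_2 \ge \cdots \ge b_N$, and by our convention in Remark~\ref{rem:rempty} we may ignore any empty columns beyond column $N$.

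Finally, Proposition~\ref{prop:nummingen} applies verbatim to $\mathbb{D}(\mathbf{a})$ with this sequence of column counts, yielding that $\mathcal{P}(\mathbb{D}(\mathbf{a}))$ is bounded. There is no real obstacle here; the only thing to be careful about is the convention on empty columns, and the fact that the hypothesis of Proposition~\ref{prop:nummingen} is phrased in terms of the nonempty columns of the diagram. Since $\mathbb{D}(\mathbf{a})$ has no empty columns to the left of any nonempty column (by left justification), the setup of Proposition~\ref{prop:nummingen} is met and the conclusion follows immediately.
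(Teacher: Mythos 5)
Your proposal is correct and matches the paper's argument: the paper also derives Theorem~\ref{thm:uniqminkey} as a direct consequence of Proposition~\ref{prop:nummingen}, the point being exactly that the column counts of a key diagram are weakly decreasing. You have simply spelled out the (routine) verification that $b_j = |\{i : a_i \ge j\}|$ is weakly decreasing, which the paper leaves implicit.
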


In fact, we can describe the unique minimal element. Intuitively, following the proof of Proposition~\ref{prop:nummingen}, the unique minimal element is the diagram obtained by lowering all the cells in $\mathbb{D}(\mathbf{a})$ to the bottom and so is actually the key diagram of a particular weak composition. In Example~\ref{ex:minimalKey}, we illustrate a key diagram along with the minimal element of its associated Kohnert poset. Given a weak composition $\mathbf{a}=(a_1,\hdots, a_n)$, let $\texttt{sort}(\mathbf{a})$ be the weakly decreasing sequence obtained by reordering the entries of $\mathbf{a}$. 

\begin{corollary}\label{cor:permutingall}
Given a weak composition $\mathbf{a}=(a_1,\hdots, a_n)$, $\mathbb{D}(\texttt{sort}(\mathbf{a}))$ in the unique minimal element of $\mathcal{P}(\mathbb{D}(\mathbf{a}))$.
\end{corollary}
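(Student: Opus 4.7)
The plan is to leverage Proposition~\ref{prop:nummingen} together with a conjugate-partition computation. First, I would verify that the key diagram $\mathbb{D}(\mathbf{a})$ satisfies the hypothesis of Proposition~\ref{prop:nummingen}: since cells in $\mathbb{D}(\mathbf{a})$ are left-justified in every row, whenever $(i,j+1)\in\mathbb{D}(\mathbf{a})$ we also have $(i,j)\in\mathbb{D}(\mathbf{a})$, so the column-height sequence
\[
b_j=|\{i\;:\;a_i\ge j\}|
\]
is weakly decreasing in $j$. Consequently Proposition~\ref{prop:nummingen} applies, and its proof exhibits the unique minimum of $\mathcal{P}(\mathbb{D}(\mathbf{a}))$ explicitly as the bottom-justified diagram
\[
D_{min}=\bigcup_j\{(k,j)\;:\;1\le k\le b_j\}.
\]

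Next, I would identify $D_{min}$ with $\mathbb{D}(\texttt{sort}(\mathbf{a}))$. Writing $\lambda=\texttt{sort}(\mathbf{a})$, viewed as a partition, the column-height formula becomes $b_j=|\{i:\lambda_i\ge j\}|=\lambda'_j$, the $j$th part of the conjugate partition. Then the number of cells of $D_{min}$ lying in row $k$ is
\[
|\{j\;:\;b_j\ge k\}|=|\{j\;:\;\lambda'_j\ge k\}|=(\lambda')'_k=\lambda_k,
\]
and these cells occupy the consecutive columns $1,2,\ldots,\lambda_k$ since $(b_j)$ is weakly decreasing. Thus row $k$ of $D_{min}$ consists of $\lambda_k$ left-justified cells, which is exactly the $k$th row of $\mathbb{D}(\texttt{sort}(\mathbf{a}))$. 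Hence $D_{min}=\mathbb{D}(\texttt{sort}(\mathbf{a}))$, and combining this with Theorem~\ref{thm:uniqminkey} completes the proof.

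There is no real obstacle here: the statement is essentially a bookkeeping corollary of Proposition~\ref{prop:nummingen}. The only thing one has to be careful about is the conjugate-partition identification, which is routine once one notes that left-justified rows of weakly arbitrary length produce weakly decreasing column heights after sorting. In particular, no new Kohnert-move argument is needed beyond what already appears in the proof of Proposition~\ref{prop:nummingen}.
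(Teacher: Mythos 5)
Your proposal is correct and follows essentially the same route the paper takes: the paper also derives this corollary by invoking Proposition~\ref{prop:nummingen} (whose hypothesis holds because left-justified rows give weakly decreasing column heights) and identifying the resulting bottom-justified diagram with $\mathbb{D}(\texttt{sort}(\mathbf{a}))$. Your conjugate-partition computation just makes explicit the bookkeeping that the paper leaves as an intuitive remark.
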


\begin{example}\label{ex:minimalKey}
    For $\mathbf{a}=(0,3,4,2,3)$, we have that $\texttt{sort}(\mathbf{a})=(4,3,3,2,0)$ and in Figure~\ref{fig:key} we illustrate \textup{(a)} $\mathbb{D}(\mathbf{a})$ together with \textup{(b)} the minimal element $\mathbb{D}(\texttt{sort}(\mathbf{a}))$ of its associated Kohnert poset.
    \begin{figure}[H]
        \centering
        $$\begin{tikzpicture}[scale=0.4]
        \node at (0.5, 1.5) {$\times$};
        \node at (1.5, 1.5) {$\times$};
        \node at (2.5, 1.5) {$\times$};
  \node at (0.5, 2.5) {$\times$};
  \node at (1.5, 2.5) {$\times$};
  \node at (2.5, 2.5) {$\times$};
  \node at (3.5, 2.5) {$\times$};
  \node at (0.5, 3.5) {$\times$};
  \node at (1.5, 3.5) {$\times$};
  \node at (0.5, 4.5) {$\times$};
  \node at (1.5, 4.5) {$\times$};
  \node at (2.5, 4.5) {$\times$};
  \draw (0,6)--(0,0)--(5,0);
  \draw (0,1)--(3,1)--(3,2)--(0,2);
  \draw (1,1)--(1,5);
  \draw (2,1)--(2,2);
  \draw (0,3)--(1,3);
  \draw (0,4)--(2,4)--(2,3)--(1,3);
  \draw (0,5)--(3,5)--(3,4)--(2,4);
  \draw (2,4)--(2,5);
  \draw (2,3)--(4,3)--(4,2)--(3,2);
  \draw (2,2)--(2,3);
  \draw (3,2)--(3,3);
  \node at (2.5, -1) {(a)};
\end{tikzpicture}\quad\quad\quad\quad\quad\quad \begin{tikzpicture}[scale=0.4]
        \node at (0.5, 1.5) {$\times$};
        \node at (1.5, 1.5) {$\times$};
        \node at (2.5, 1.5) {$\times$};
  \node at (0.5, 0.5) {$\times$};
  \node at (1.5, 0.5) {$\times$};
  \node at (2.5, 0.5) {$\times$};
  \node at (3.5, 0.5) {$\times$};
  \node at (0.5, 3.5) {$\times$};
  \node at (1.5, 3.5) {$\times$};
  \node at (0.5, 2.5) {$\times$};
  \node at (1.5, 2.5) {$\times$};
  \node at (2.5, 2.5) {$\times$};
  \draw (0,6)--(0,0)--(5,0);
  \draw (0,1)--(4,1)--(4,0);
  \draw (0,2)--(3,2)--(3,1);
  \draw (0,3)--(3,3)--(3,2);
  \draw (0,4)--(2,4)--(2,3);
  \draw (1,0)--(1,4);
  \draw (2,0)--(2,3);
  \draw (3,0)--(3,1);
  \node at (2.5, -1) {(b)};
\end{tikzpicture}$$
        \caption{(a) Key diagram and (b) associated minimal element}
        \label{fig:key}
    \end{figure}
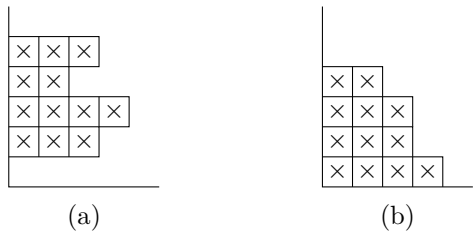
\end{example}

Next, we fully characterize when the Kohnert poset associated with a key diagram $\mathbb{D}(\mathbf{a})$ is ranked. We present such a characterization by first defining a particular family of weak compositions, called ``pure compositions'', and showing that $\mathcal{P}\left(\mathbb{D}(\mathbf{a})\right)$ is ranked if and only if $\mathbf{a}$ is a pure composition. This proof is more complex, and we split it into several steps. 

\begin{definition}\label{def:purecomp}
A weak composition $\mathbf{a}=(a_1,\hdots,a_n)$ is said to be \textbf{pure} if there exist no distinct indices $j_1,\ j_2,\ j_3$, with $1\le j_1<j_2<j_3\le n$, such that $a_{j_1}<a_{j_2}<a_{j_3}$, $a_{j_1}<a_{j_3}<a_{j_2}$, or $a_{j_1}+1<a_{j_2}=a_{j_3}$. Otherwise, we say that $\mathbf{a}$ is \textbf{non-pure}. \textup(See Example~\ref{ex:purecomp} for an example of a pure composition.\textup)
\end{definition}

We dedicate the rest of the section to proving the following result.

\begin{theorem}\label{thm:keyrank}
    Let $\mathbf{a}$ be a weak composition. Then $\mathcal{P}(\mathbb{D}(\mathbf{a}))$ is ranked if and only if $\mathbf{a}$ is a pure composition.
\end{theorem}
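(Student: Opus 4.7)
The plan is to prove both directions separately, mirroring the strategy of Theorem \ref{thm:2rrank}: the forward direction uses Corollary \ref{cor:ranked} to rule out rankedness whenever $\mathbf{a}$ is non-pure, and the backward direction establishes that $rk(D) = rowsum(D) - b(\mathbb{D}(\mathbf{a}))$ is a rank function on $\mathcal{P}(\mathbb{D}(\mathbf{a}))$ whenever $\mathbf{a}$ is pure.

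For the forward direction I would prove the contrapositive. Fix a witnessing triple $j_1 < j_2 < j_3$ for one of the three forbidden patterns of Definition \ref{def:purecomp}. In each case I would construct, starting from $\mathbb{D}(\mathbf{a})$ and applying an explicit sequence of Kohnert moves, a descendant $D \in KD(\mathbb{D}(\mathbf{a}))$ whose cells match one of the two templates of Corollary \ref{cor:ranked}. The two strict-inequality patterns $a_{j_1} < a_{j_2} < a_{j_3}$ and $a_{j_1} < a_{j_3} < a_{j_2}$ should match Corollary \ref{cor:ranked}(a): the two strict inequalities produce the required pair of distinct rightmost-cell columns $c_1 < c_2$ in adjacent rows $r^*+2$ and $r^*+1$, while dropping the cells associated with row $j_1$ well below creates the needed empty positions at $(r^*, c_1)$ and $(r^*, c_2)$. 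The equal-value pattern $a_{j_1}+1 < a_{j_2} = a_{j_3}$ should match Corollary \ref{cor:ranked}(b): the two equal-length rows supply the cells at $(r^*+1, c_1), (r^*+2, c_1), (r^*+2, c_2)$ after sliding one of them down a row, and the gap $a_{j_1}+1 < a_{j_2}$ produces the empty position $(r^*, c_1)$ together with enough room to drop a cell to $(r^*, c_2)$. In each case Corollary \ref{cor:ranked} then yields the non-rankedness.

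For the backward direction, assume $\mathbf{a}$ is pure. By Theorem \ref{thm:uniqminkey} the poset $\mathcal{P}(\mathbb{D}(\mathbf{a}))$ is bounded, so by Lemma \ref{lem:bdrank} it suffices to show that every maximal chain has the same length. I would argue that $rk(D) = rowsum(D) - b(\mathbb{D}(\mathbf{a}))$ is a rank function. Condition (1) of the rank-function definition is automatic, since every nontrivial Kohnert move strictly decreases $rowsum$, and condition (2) reduces to showing that every cover $D_2 \precdot D_1$ corresponds to a Kohnert move $(r,c) \to (r',c)$ with $r - r' = 1$. Arguing by contradiction, suppose $r - r' \geq 2$; then all of $(r-1, c), \ldots, (r'+1, c)$ are filled in $D_1$, and the goal is to produce an intermediate diagram $D^* \in KD(\mathbb{D}(\mathbf{a}))$ with $D_2 \prec D^* \prec D_1$ by applying a Kohnert move at row $r-1$ of $D_1$ before the move at row $r$.

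The main obstacle is ensuring such an intermediate $D^*$ always exists. This requires an auxiliary structural lemma describing the shape of arbitrary diagrams in $KD(\mathbb{D}(\mathbf{a}))$ when $\mathbf{a}$ is pure. I expect purity to force every diagram to remain sufficiently ``left-justified and monotone'' that, whenever the rightmost cell of row $r$ sits directly above a filled position $(r-1, c)$, the cell $(r-1, c)$ is itself the rightmost cell of its row; this would make the Kohnert move at row $r-1$ legitimate and place its output strictly between $D_1$ and $D_2$. Proving this invariant is the technical heart of the argument and will likely require induction on the length of a Kohnert sequence reaching $D_1$ from $\mathbb{D}(\mathbf{a})$, together with a case analysis ruling out each of the three forbidden patterns appearing in the relevant column. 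On the forward side, the subtlest case is the pattern $a_{j_1} < a_{j_3} < a_{j_2}$, where $j_2$ carries the largest entry while sitting between the other two indices; the Kohnert sequence must reorder the effective row widths without violating the rightmost-cell conditions demanded by Corollary \ref{cor:ranked}(a).
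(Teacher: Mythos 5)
Your overall architecture coincides with the paper's: the forward direction is driven by Corollary~\ref{cor:ranked}, and the backward direction by showing that $rk(D)=rowsum(D)-b(\mathbb{D}(\mathbf{a}))$ is a rank function, with the crux being that a cover relation cannot drop a cell by more than one row. However, both directions as proposed have genuine gaps where the paper does real work.

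For the forward direction, Corollary~\ref{cor:ranked} requires the three relevant cells to occupy three \emph{consecutive} rows $r^*$, $r^*+1$, $r^*+2$, whereas the witnessing indices $j_1<j_2<j_3$ may be far apart, with arbitrary rows of $\mathbb{D}(\mathbf{a})$ in between. ``Dropping cells well below'' does not make those rows adjacent, and the intervening rows will generically violate the rightmostness and emptiness hypotheses of the corollary (or of the more general Theorems~\ref{thm:ranked1} and~\ref{thm:ranked2}, whose conditions on intervening rows also fail for arbitrary key diagrams). The missing idea is the paper's Lemma~\ref{lem:permute}: swapping a row of a key diagram with a strictly shorter row below it produces another key diagram in $KD(\mathbb{D}(\mathbf{a}))$. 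Choosing $j_1$ maximal and then $j_2$, $j_3$ minimal, two such swaps place the pattern into three consecutive rows, after which the explicit Kohnert-move constructions you sketch (Lemma~\ref{lem:keyconsec}) do match the two templates exactly as you predict. The paper also first collapses the three forbidden patterns to two (Lemma~\ref{lem:non-pure}); in particular $a_{j_1}<a_{j_3}<a_{j_2}$ forces $a_{j_2}\ge a_{j_1}+2$ and is absorbed into the second pattern, which removes the case you flag as subtle.

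For the backward direction, the structural invariant you defer is the entire technical content, and as literally stated it is false: for the pure composition $\mathbf{a}=(1,2,2)$, applying one Kohnert move at row $3$ of $\mathbb{D}(\mathbf{a})$ yields a diagram in which the rightmost cell $(3,1)$ of row $3$ sits directly above the filled position $(2,1)$, yet $(2,1)$ is not rightmost in row $2$ (this is harmless only because column $1$ is full below). The correct invariant, which is all the cover argument needs, is: if a diagram in $KD(\mathbb{D}(\mathbf{a}))$ has two cells in a column sharing an empty position below them both, then the lower cell has no cell to its right in its row. The paper proves this not by induction on Kohnert sequences but by a structural decomposition of pure compositions into blocks of four explicit types (Lemma~\ref{lem:purecomp}, a five-case argument), observing that cells cannot migrate between blocks and that only type~(iv) blocks admit the offending column configuration, in which case every column to its right is empty. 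That decomposition lemma is the bulk of the proof and is absent from your plan.
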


The proof of this result will follow from Theorem~\ref{thm:keynecrank} and Lemma~\ref{lem:keysuff} below.

\begin{remark}
    The name ``pure'' composition comes from the fact that, as we show in this section, the corresponding Kohnert posets are pure posets, i.e., all maximal chains are of equal length. In particular, this follows from Lemma~\ref{lem:bdrank} along with Theorems~\ref{thm:uniqminkey} and~\ref{thm:keyrank}.

    Comparing the conditions defining pure compositions with those found in~\textup{\cite{MF}}, we find that pure compositions generate multiplicity-free Demazure characters, i.e., coefficients of monomials are 0 or 1. In~\textup{\cite{MF}}, Demazure characters are referred to as key polynomials.  Note that, although sufficient, being pure is not a necessary condition for a weak composition to generate a multiplicity-free Demazure character.
\end{remark}

\subsection{Properties of pure compositions}

In this section, we show that pure compositions can be decomposed into ``smaller'' pure compositions of four basic types.

First, notice the following direct consequence of the definition of pure composition.
\begin{lemma}\label{lem:purecons}
    Let $\mathbf{a}=(a_1,\hdots,a_n)$ be a pure composition. 
    \begin{enumerate}
        \item[\textup{(a)}] Suppose there exist indices $i$ and $j$ with $1\le i<j\le n$ such that $a_j-a_i=1$. Then for all $k\ge j$, $a_k\le a_j$; that is, if there are two rows of $\mathbb{D}(\mathbf{a})$ such that the higher row has exactly one more cell than the lower row, then all the rows above the higher row have at most as many cells as the higher row. 
        
        \item[\textup{(b)}] Suppose there exist indices $i$ and $j$ with $1\le i<j\le n$ such that $a_j-a_i>1$. Then for all $k\geq j$, $a_k\le a_i$; that is, if there are two rows of $\mathbb{D}(\mathbf{a})$ such that the higher row has at least two cells more than the lower row, then all the rows above the higher row have at most as many cells as the lower row. 
    \end{enumerate}
\end{lemma}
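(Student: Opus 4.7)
The plan is to prove both parts by contradiction, invoking directly the three forbidden configurations listed in Definition~\ref{def:purecomp}.

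For part (a), I would fix indices $i<j$ with $a_j = a_i + 1$ and let $k\ge j$ be arbitrary. Assume for contradiction that $a_k > a_j$. Since $a_k > a_j$ forces $k\neq j$, we have $i < j < k$ with $a_i < a_j < a_k$, which is precisely the first forbidden pattern $a_{j_1} < a_{j_2} < a_{j_3}$. This contradicts purity of $\mathbf{a}$, so $a_k\le a_j$, as claimed.

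For part (b), note first that the range ``$k\ge j$'' must be read as $k>j$ (the English paraphrase says ``rows above the higher row'', and the case $k=j$ would give the impossible $a_j\le a_i$ under the hypothesis $a_j\ge a_i+2$). With $k>j$, I would assume for contradiction that $a_k > a_i$ and split on how $a_k$ compares to $a_j$. If $a_k < a_j$, then $a_i < a_k < a_j$ with $i<j<k$ is the second forbidden pattern $a_{j_1}<a_{j_3}<a_{j_2}$. If $a_k = a_j$, then the hypothesis $a_j\ge a_i+2$ rewrites as $a_i+1 < a_j = a_k$, which is the third forbidden pattern $a_{j_1}+1 < a_{j_2}=a_{j_3}$. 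Finally, if $a_k > a_j$, then $a_i < a_j < a_k$ recovers the first forbidden pattern. Each case contradicts purity, forcing $a_k\le a_i$.

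There is no serious obstacle: the entire argument is a direct check against Definition~\ref{def:purecomp}, and the only bit of care needed is the three-way split in part (b). The one subtlety worth flagging is the index range in part (b), which must be interpreted as strict in order for the stated conclusion to be consistent with the hypothesis.
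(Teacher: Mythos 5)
Your proof is correct and is precisely the direct case-check against Definition~\ref{def:purecomp} that the paper intends -- the paper states this lemma without proof, calling it a ``direct consequence of the definition,'' and your argument supplies exactly the missing details. Your observation that the range in part (b) must be read as $k>j$ is also a legitimate catch: for $k=j$ the conclusion $a_j\le a_i$ contradicts the hypothesis $a_j-a_i>1$, so the stated ``$k\ge j$'' is a minor slip in the lemma, consistent with the English paraphrase ``rows above the higher row.''
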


Illustrations of the consequences of Lemma~\ref{lem:purecons}~(a) and (b) are given in Figure~\ref{fig:purecons}~(a) and (b), respectively.

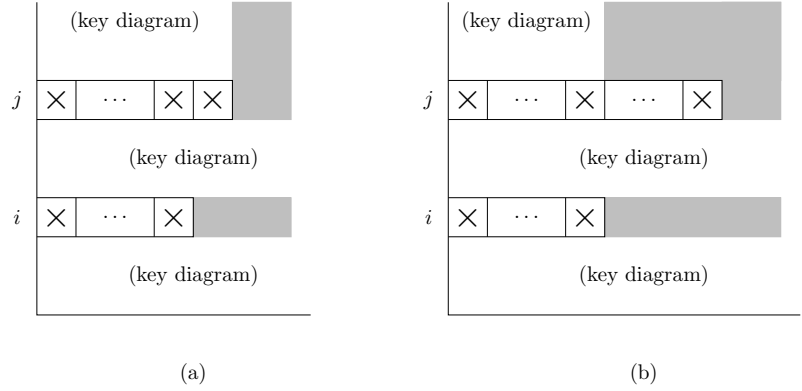
\begin{figure}[H]
    \centering
    $$\scalebox{0.8}{\begin{tikzpicture}[scale=0.65]
    \node at (0.5,5.5) {$\bigtimes$};
    \node at (3.5,5.5) {$\bigtimes$};
    \node at (4.5,5.5) {$\bigtimes$};
    \node at (-0.5,5.5) {$j$};
    \node at (-0.5,2.5) {$i$};
    \node at (2,5.5) {$\hdots$};
    
    \node at (0.5,2.5) {$\bigtimes$};
    \node at (2,2.5) {$\hdots$};
    \node at (3.5,2.5) {$\bigtimes$};
    \node at (2.5,7.5) {(key diagram)};
    \node at (4,4) {(key diagram)};
    \node at (4,1) {(key diagram)};
    \draw (0,8)--(0,0)--(7,0);
    \filldraw[draw=lightgray,fill=lightgray] (4,2) rectangle (6.5,3);
    \filldraw[draw=lightgray,fill=lightgray] (5,5) rectangle (6.5,8);
    \draw (0,2)--(4,2)--(4,3)--(0,3);
    \draw (1,2)--(1,3);
    \draw (3,2)--(3,3);
    \draw (0,5)--(5,5)--(5,6)--(0,6);
    \draw (1,5)--(1,6);
    \draw (3,5)--(3,6);
    \draw (4,5)--(4,6);
    \node at (4,-1.5) {(a)};
\end{tikzpicture}}\quad\quad\quad\quad\scalebox{0.8}{\begin{tikzpicture}[scale=0.65]
    \node at (0.5,5.5) {$\bigtimes$};
    \node at (3.5,5.5) {$\bigtimes$};
    \node at (5,5.5) {$\hdots$};
    \node at (6.5,5.5) {$\bigtimes$};
    \node at (-0.5,5.5) {$j$};
    \node at (-0.5,2.5) {$i$};
    \node at (2,5.5) {$\hdots$};
    
    \node at (0.5,2.5) {$\bigtimes$};
    \node at (2,2.5) {$\hdots$};
    \node at (3.5,2.5) {$\bigtimes$};
    \node at (2,7.5) {(key diagram)};
    \node at (5,4) {(key diagram)};
    \node at (5,1) {(key diagram)};
    \draw (0,8)--(0,0)--(9,0);
    \filldraw[draw=lightgray,fill=lightgray] (4,2) rectangle (8.5,3);
    \filldraw[draw=lightgray,fill=lightgray] (7,5) rectangle (8.5,8);
    \filldraw[draw=lightgray,fill=lightgray] (4,6) rectangle (8.5,8);
    \draw (0,2)--(4,2)--(4,3)--(0,3);
    \draw (1,2)--(1,3);
    \draw (3,2)--(3,3);
    \draw (0,5)--(7,5)--(7,6)--(0,6);
    \draw (1,5)--(1,6);
    \draw (3,5)--(3,6);
    \draw (4,5)--(4,6);
    \draw (6,5)--(6,6);
    \node at (5,-1.5) {(b)};
\end{tikzpicture}}$$
    \caption{Consequences of Lemma~\ref{lem:purecons}~(a) and (b)}
    \label{fig:purecons}
\end{figure}

The next result essentially says that any pure composition can be split into ``smaller'' pure compositions, each of which is of one of four possible forms. For notation, given a weak composition $\mathbf{a}=(a_1,\hdots,a_n)$, ongoing we set $\max(\mathbf{a})=\max\{a_i~|~1\le i\le n\}$ and $\min(\mathbf{a})=\min\{a_i~|~1\le i\le n\}$, i.e., $\max(\mathbf{a})$ (resp., $\min(\mathbf{a})$) is the maximal (resp., minimal) entry of $\mathbf{a}$.

\begin{lemma}\label{lem:purecomp}
    Let $\mathbf{a}=(a_1,\hdots,a_n)$ be a pure composition. Then there exist indices $i_0, \ldots, i_{m+1}$ with $1=i_0<\hdots < i_{m}<i_{m+1}=n+1$ such that if we define $\alpha_j=(a_{i_j},a_{i_j+1}\hdots,a_{i_{j+1}-1})$ for $0\le j\le m$, then $\min(\alpha_{j-1})\ge \max(\alpha_j)$ for $0<j\le m$, and each $\alpha_j$ is of one of the following forms:
    \begin{enumerate}
        \item[\textup{(i)}] $a_{i_j}\ge \hdots\ge a_{i_{j+1}-1}$; that is, $\alpha_j$ is a weakly decreasing sequence. 
        \item[\textup{(ii)}] There exists $p\in\mathbb{Z}_{\ge 0}$ such that $a_{i_j}=p$ and $\{a_{i_j},\hdots,a_{i_{j+1}-1}\}=\{p,p+1\}$; that is, all entries of $\alpha_j$ are $p$ or $p+1$ for some $p\in \mathbb{Z}_{\ge 0}$, the first entry is $p$, and at least one other entry must be equal to $p+1$. 
        \item[\textup{(iii)}] $a_{i_j}\ge\hdots\ge a_{i_{j+1}-2}<a_{i_{j+1}-1}-1$; that is, the entries of $\alpha_j$ are in decreasing order, except the last one which is at least two larger than the penultimate one. 
        \item[\textup{(iv)}] There exist $p\in\mathbb{Z}_{\ge 0}$ and $i_j^*\in\mathbb{Z}_{> 0}$ with $i_j+1<i_j^*<i_{j+1}-1$ such that $a_{i_j}=p$, $\{a_{i_j},\hdots,a_{i_j^*-1}\}=\{p,p+1\}$, $p>a_{i_j^*}\ge \hdots\ge a_{i_{j+1}-2}$, and $a_{i_{j+1}-1}=p+1$.
    \end{enumerate}
\end{lemma}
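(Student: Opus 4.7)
The plan is to proceed by strong induction on $n$. The base case $n = 1$ is immediate (single block of form (i)). For the inductive step, the task is to isolate the first block $\alpha_0 = (a_1,\ldots,a_{i_1-1})$ of one of the four forms, verify the boundary condition $\min(\alpha_0) \geq \max(\alpha_1)$, and then apply the induction hypothesis to the suffix $(a_{i_1},\ldots,a_n)$, which is automatically pure since the three forbidden patterns are inherited under taking any subsequence of indices.

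Setting $p = a_1$ and letting $k$ be the largest index with $a_1 \geq a_2 \geq \cdots \geq a_k$, the easier cases go quickly. If $k = n$, take $\alpha_0 = \mathbf{a}$ of form (i). If $k < n$ and $a_{k+1} \geq a_k + 2$, take $\alpha_0 = (a_1,\ldots,a_{k+1})$ of form (iii), and Lemma~\ref{lem:purecons}(b) applied with $i = k$, $j = k+1$ bounds all later entries by $a_k = \min(\alpha_0)$. If $k < n$, $a_{k+1} = a_k + 1$, and $a_k < p$, take $\alpha_0 = (a_1,\ldots,a_\ell)$ of form (i), where $\ell$ is the largest index $\leq k$ with $a_\ell \geq a_{k+1}$ (such an $\ell$ exists because $a_1 = p \geq a_k + 1 = a_{k+1}$); Lemma~\ref{lem:purecons}(a) then yields $\max(\alpha_1) \leq a_{k+1} \leq a_\ell = \min(\alpha_0)$.

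The remaining possibility is $a_{k+1} = a_k + 1$ with $a_k = p$ (so $a_1 = \cdots = a_k = p$ and $a_{k+1} = p + 1$), placing us in form (ii)/(iv) territory. I would let $i^*$ be the smallest index $> 1$ with $a_{i^*} \notin \{p, p+1\}$, taken to be $\infty$ if no such index exists, in which case $\alpha_0 = \mathbf{a}$ is form (ii). Otherwise the 123 pattern applied to the triple $(a_1, a_{k+1}, a_{i^*})$ forces $a_{i^*} < p$; the forbidden equal pattern with $j_1 = i^*$ forces at most one further $p+1$ after $i^*$; and the 123 pattern applied with middle index $k^* + 1$ (where $k^*$ is the largest index with $a_{i^*} \geq \cdots \geq a_{k^*}$) implies that if any trailing $p+1$ exists, it must occur precisely at position $k^* + 1$. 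Accordingly, if $k^* < n$ and $a_{k^* + 1} = p + 1$, extend $\alpha_0$ through position $k^* + 1$ as form (iv), and the 213 pattern on triples $(k^*, k^* + 1, j)$ then bounds every later entry by $a_{k^*} = \min(\alpha_0)$; otherwise no trailing $p+1$ exists at all, so close $\alpha_0$ at position $i^* - 1$ as form (ii), and all entries past $i^*$ are then $\leq p = \min(\alpha_0)$.

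The main obstacle is this last form (ii)/(iv) case, where all three forbidden patterns (123, 213, and the equal pattern) must be juggled simultaneously to rule out hybrid configurations and pin down the termination of $\alpha_0$. Once the block $\alpha_0$ is constructed, the induction hypothesis produces $\alpha_1,\ldots,\alpha_m$ on the suffix, and the chain of boundary inequalities $\min(\alpha_{j-1}) \geq \max(\alpha_j)$ follows block-by-block from the verifications above together with those from the induction.
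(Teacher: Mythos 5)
Your proposal is correct and follows essentially the same route as the paper: both isolate the first block by locating the first ascent and splitting into the same five cases (jump of size $\ge 2$ giving type (iii); an earlier entry at least as large as the ascent top giving type (i); everything in $\{p,p+1\}$ giving type (ii); and, after a dip below $p$, either no trailing $p+1$ (type (ii)) or a single trailing $p+1$ pinned to the end of the decreasing run (type (iv))), with the paper phrasing the recursion as an iterative stripping procedure rather than strong induction. The only cosmetic difference is that you locate the trailing $p+1$ by first fixing the maximal decreasing run ending at $k^*$ and forcing the $p+1$ to sit at $k^*+1$, whereas the paper fixes the position $k_3$ of the trailing $p+1$ and then proves the intermediate entries decrease; these are interchangeable.
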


\begin{remark}
   To guarantee uniqueness of such decompositions it suffices to assume that the subcompositions of each type are taken to be as large as possible. 
\end{remark}

Before giving the proof of Lemma~\ref{lem:purecomp}, we provide an example of a pure composition along with a decomposition into subcompositions of the form described therein.

\begin{example}\label{ex:purecomp}
    Consider the pure composition $$\mathbf{a}=(25,22,18,18,17,16,14,19,13,10,8,8,7,8,8,7,8,7,5,5,6,5,6,5,4,4,6,2,2,2,3,3,2,3,2,3,3).$$ The decomposition into subcompositions of the form described in Lemma~\ref{lem:purecomp} is given by $$\alpha_1=(25,22,18,18,17,16,14,19),\quad\quad\alpha_2=(13,10,8,8),\quad\quad\alpha_3=(7,8,8,7,8,7),$$ $$\alpha_4=(5,5,6,5,6,5,4,4,6),\quad\quad\text{and}\quad\quad\alpha_5=(2,2,2,3,3,2,3,2,3,3).$$ 
    Note that $\alpha_1$ is of type \textup{(iii)}, $\alpha_2$ is of type \textup{(i)}, $\alpha_3$ and $\alpha_5$ are of type \textup{(ii)}, and $\alpha_4$ is of type \textup{(iv)}. We illustrate in Example~\ref{ex:compositionexplanation} how the above decomposition can be found following the procedure described in the proof of Lemma~\ref{lem:purecomp}.
\end{example}

\begin{proof}
Note that the weak compositions described in (i)--(iv) are pure compositions. We refer to these labels as the types of the pure compositions $\alpha_j$. 

To establish the result, we outline an approach for identifying choices of ``smaller'' pure compositions $\alpha_j$. As input, we take the pure composition $\mathbf{a} = (a_1,\dots,a_n)$ for the first iteration, and the sequence $\mathbf{\tilde{a}}$, described below, for the next iteration if necessary. 

If $n=1$ or there exists no index $k_1$ such that $a_{k_1-1}< a_{k_1}$, then we take $\alpha_1 = \mathbf{a}$, which is of type (i), and the process finishes. Otherwise, if $n>1$ and there exists a least index $k_1>1$ such that $a_{k_1-1}< a_{k_1}$, then there are five cases: 
\bigskip

\noindent
\textbf{Case 1:} $a_{k_1}-a_{k_1-1}>1$. If $k_1=n$, then we take $\alpha_1=\mathbf{a}$, which is of type (iii) and the process finishes. Otherwise, note that by Lemma~\ref{lem:purecons}~(b), $$\max\{a_k~|~k> k_1\}\le a_{k_1-1}=\min\{a_k~|~k\le k_1\}.$$ Thus, if $k_1<n$, then we take $i_1=k_1+1$ so that $\alpha_1=(a_1,\hdots,a_{k_1})$, which is of type (iii), and repeat the process with the input sequence $\mathbf{\tilde{a}} = (a_{k_1+1}, \ldots, a_n)$.
\bigskip

\noindent
\textbf{Case 2:} $a_{k_1}-a_{k_1-1}=1$ and there exists no $k$ such that $1\le k\le n$ and $a_{k}\notin \{a_{k_1-1},a_{k_1}=a_{k_1-1}+1\}$. Then, we take $\alpha_1=(a_1,\hdots,a_n)$, which is of type (ii), and the process finishes. 
\bigskip

\noindent
\textbf{Case 3:} $a_{k_1}-a_{k_1-1}=1$ and there exists $k_0$ such that $1\le k_0<k_1-1$ and $a_{k_0}\ge a_{k_1}$. Assume that $k_0$ is maximal with the aforementioned properties. Note that, by our choice of $k_0$ along with Lemma~\ref{lem:purecons}~(a), $$\max\{a_k~|~k> k_0\}\le a_{k_1}\le a_{k_0}=\min\{a_k~|~k\le k_0\}.$$ Thus, in this case, we take $i_1=k_0+1$ so that $\alpha_1=(a_1,\hdots,a_{k_0})$, which is of type (i), and repeat the process with the input sequence $\mathbf{\tilde{a}} = (a_{k_0+1}, \ldots, a_n)$.
\bigskip

\noindent
\textbf{Case 4:} $a_{k_1}-a_{k_1-1}=1$, $a_k=a_{k_1-1}$ for all $k<k_1$, and there exists $k_2$ such that $k_1<k_2\le n$, $a_{k_2}<a_{k_1-1}$, and $a_{k}\le a_{k_1-1}$ for all $k> k_2$. Assume that $k_2$ is the least index with the aforementioned properties. Note that, by our choice of $k_2$, $$\max\{a_k~|~k\ge k_2\}\le a_{k_1-1}=\min\{a_k~|~k<k_2\}.$$ Thus, in this case, we take $i_1=k_2$ so that $\alpha_1=(a_1,\hdots,a_{k_2-1})$, which is of type (ii), and we repeat the process with the input sequence $\mathbf{\tilde{a}} = (a_{k_2}, \ldots, a_n)$.
\bigskip

\noindent
\textbf{Case 5:} $a_{k_1}-a_{k_1-1}=1$, $a_k=a_{k_1-1}$ for all $k<k_1$, and there exists $k_2$ and $k_3$ such that $k_1<k_2<k_3\le n$, $a_{k_2}<a_{k_1-1}$, and $a_{k_3}=a_{k_1}$. Assume that $k_2$ is minimal with the aforementioned properties. We claim that $a_k\ge a_{k+1}$ for $k_2\le k<k_3-1$. To see this, assume otherwise; that is, assume that there exists a least $k^*$ such that $k_2<k^*<k_3$ and $a_{k^*-1}<a_{k^*}$. If $a_{k^*}\le a_{k_2}$, then $a_{k^*-1}<a_{k^*}<a_{k_3}$ which is a contradiction since $\mathbf{a}$ is pure. Thus, $a_{k^*}>a_{k_2}$. If $a_{k^*}-a_{k_2}=1$, then by Lemma~\ref{lem:purecons}~(a) we must have $$a_k\le a_{k^*}=a_{k_2}+1<a_{k_1-1}+1=a_{k_1}=a_{k_3}$$ for all $k>k^*$, including $k=k_3$. Otherwise, by Lemma~\ref{lem:purecons}~(b), we must have $$a_k\le a_{k_2}<a_{k_1-1}<a_{k_1}=a_{k_3}$$ for all $k>k^*$, including $k=k_3$. In either case, we are led to a contradiction, establishing the claim. Consequently, $a_{k_3-1}=\min\{a_k~|~k\le k_3\}$. Now, if $k_3=n$, then we take $\alpha_1=\mathbf{a}$, which is of type (iv), and the procedure finishes. Otherwise, applying Lemma~\ref{lem:purecons}~(b), we find that $$\max\{a_k~|~k> k_3\}\le a_{k_3-1}=\min\{a_k~|~k\le k_3\}.$$ Therefore, if $k_3<n$, then we take $i_1=k_3+1$ so that $\alpha_1=(a_1,\hdots,a_{k_3})$, which is of type (iv), and we repeat the process with the input sequence $\mathbf{\tilde{a}}=(a_{k_3+1},\hdots,a_n)$.
\bigskip

\noindent
Note that in each case the procedure either finishes with a pure composition of one of the four types or removes an initial sequence corresponding to one of the four types and starts over with the weak composition that remains. Consequently, as weak compositions only have a finite number of entries, the procedure must terminate in a finite number of steps with the desired decomposition.
\end{proof}

\begin{example}\label{ex:compositionexplanation}
    Let $\mathbf{a}$ be the pure composition of Example~\ref{ex:purecomp}. To decompose $\mathbf{a}$ into subcompositions of the form described in Lemma~\ref{lem:purecomp} as outlined in the proof above, first note the $\mathbf{a}$ is neither weakly decreasing nor does it consist of a single entry. Thus, we must find the first ascent, i.e., the first instance where an entry is less than the one that follows it. As the first ascent occurs between the seventh and eighth entries, i.e., 14 and 19, respectively, whose difference is greater than 1, we fall into Case 1 of the proof above so that $$\alpha_1=(25,22,18,18,17,16,14,19)$$ and we start the process over with $$\mathbf{\tilde{a}}=(13,10,8,8,7,8,8,7,8,7,5,5,6,5,6,5,4,4,6,2,2,2,3,3,2,3,2,3,3).$$ Now, as in the previous iteration, we start by finding the first ascent. In this case, the first ascent occurs between the fifth and sixth entries, i.e., 7 and 8, respectively, whose difference is exactly 1. Note that prior to the ascent, there is an entry larger than or equal to 8. Consequently, we are in Case 3 of the proof above. Finding the last such entry from left to right, we take $$\alpha_2=(13,10,10,8,8)$$ and we start the process over with $$\mathbf{\tilde{a}}=(7,8,8,7,8,7,5,5,6,5,6,5,4,4,6,2,2,2,3,3,2,3,2,3,3).$$ The first ascent in the current input composition $\mathbf{\tilde{a}}$ occurs between the first and second entries, i.e., 7 and 8, respectively, whose difference is 1. Note that there are no prior entries greater than or equal to 8, but that there does exist an entry less than 7 later in $\mathbf{\tilde{a}}$. Moreover, the first such entry less than 7 is the seventh in 6, and all entries following are also less than 7. Consequently, we are in Case 4 of the proof above so that $$\alpha_3=(7,8,8,7,8,7)$$ and we start the process over with $$\mathbf{\tilde{a}}=(5,5,6,5,6,5,4,4,6,2,2,2,3,3,2,3,2,3,3).$$ The first ascent in the current version of $\mathbf{\tilde{a}}$ occurs between the second and third entries, i.e., 5 and 6, respectively, whose difference is 1. As there are no prior entries greater than or equal to 6, while there is an entry smaller than 5 followed by one equal to 6 occurring after the first ascent, we must be in Case 5 of the proof above. Consequently, $$\alpha_4=(5,5,6,5,6,5,4,4,6)$$ and we start the process over with $$\mathbf{\tilde{a}}=(2,2,2,3,3,2,3,2,3,3).$$ Finally, the first ascent of the current version of $\mathbf{\tilde{a}}$ occurs between the third and fourth entries, i.e., 2 and 3, respectively, whose difference is 1. As all entries of $\mathbf{\tilde{a}}$ are 2 or 3, it follows that we are in Case 2 of the proof above. Consequently, $$\alpha_5=\mathbf{\tilde{a}}=(2,2,2,3,3,2,3,2,3,3)$$ and we are done.
\end{example}

Ongoing, it proves convenient to have a characterization of non-pure compositions in terms of containment of patterns of one of two forms, instead of three.

\begin{lemma}\label{lem:non-pure}
Let $\mathbf{a}=(a_1,\hdots,a_n)$ be a weak composition.  Then $\mathbf{a}$ is non-pure if and only if there exist indices $j_1,\ j_2,\ j_3$ with $1\le j_1<j_2<j_3\le n$ such that 
    \begin{itemize}
    \item[\textup{(a)}] $a_{j_1}<a_{j_2}<a_{j_3}$ or
    \item[\textup{(b)}] $a_{j_1}+1< a_{j_2}$ and $a_{j_1}<a_{j_3}$.
\end{itemize}
\end{lemma}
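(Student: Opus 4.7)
The plan is to prove both directions by direct case analysis, showing that the conditions (a) and (b) in the lemma are nothing more than a convenient regrouping of the three forbidden patterns appearing in Definition~\ref{def:purecomp}.

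For the easier direction, I will assume that (a) or (b) holds and exhibit one of the three forbidden patterns. If (a) holds, the pattern $a_{j_1} < a_{j_2} < a_{j_3}$ is already one of the forbidden ones. If (b) holds, then $a_{j_1} + 1 < a_{j_2}$ gives $a_{j_1} < a_{j_2}$ strictly, so I split into three subcases according to how $a_{j_3}$ compares with $a_{j_2}$: if $a_{j_3} > a_{j_2}$ we recover pattern (a); if $a_{j_3} = a_{j_2}$ we get $a_{j_1} + 1 < a_{j_2} = a_{j_3}$, the third forbidden pattern; and if $a_{j_3} < a_{j_2}$ then combined with $a_{j_1} < a_{j_3}$ we get $a_{j_1} < a_{j_3} < a_{j_2}$, the second forbidden pattern.

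For the other direction, assume $\mathbf{a}$ is non-pure, so some forbidden pattern occurs at indices $j_1 < j_2 < j_3$. If $a_{j_1} < a_{j_2} < a_{j_3}$, then (a) holds. If $a_{j_1} + 1 < a_{j_2} = a_{j_3}$, then obviously $a_{j_1} + 1 < a_{j_2}$, and $a_{j_1} < a_{j_3}$ follows since $a_{j_3} = a_{j_2} > a_{j_1} + 1 > a_{j_1}$, so (b) holds. The only remaining case is $a_{j_1} < a_{j_3} < a_{j_2}$, where $a_{j_1} < a_{j_3}$ is immediate and strict integer inequalities give $a_{j_2} \geq a_{j_3} + 1 \geq a_{j_1} + 2$, so $a_{j_1} + 1 < a_{j_2}$ and again (b) holds.

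There is no real obstacle here: the whole statement reduces to the fact that pattern (II) from Definition~\ref{def:purecomp}, namely $a_{j_1} < a_{j_3} < a_{j_2}$, together with the integrality of the entries, forces the $+1$ strict gap required in condition (b), so pattern (II) and pattern (III) can be merged into the single condition (b). I will simply present the six-line case analysis above, being careful only to invoke integrality in the one place it is used.
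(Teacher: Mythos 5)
Your proof is correct: the two directions are verified by exhaustive case analysis on the three forbidden patterns of Definition~\ref{def:purecomp}, and you correctly invoke integrality in the one place it is needed (to upgrade $a_{j_1}<a_{j_3}<a_{j_2}$ to $a_{j_1}+1<a_{j_2}$). The paper states this lemma without proof, treating it as a routine consequence of the definition, and your argument is exactly the intended verification.
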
 

\subsection{Proof of the necessary conditions}
In this section, we establish the following result.

\begin{theorem}\label{thm:keynecrank}
    If $\mathbf{a}=(a_1,\hdots,a_n)$ is a non-pure composition, then $\mathcal{P}(\mathbb{D}(\mathbf{a}))$ is not ranked.
\end{theorem}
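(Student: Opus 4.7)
The plan is to establish the contrapositive structure by explicitly exhibiting, for each non-pure $\mathbf{a}$, a diagram $D\in KD(\mathbb{D}(\mathbf{a}))$ that contains a subdiagram of the shape prescribed by Corollary~\ref{cor:ranked}(a) or Corollary~\ref{cor:ranked}(b); the corollary then forces $\mathcal{P}(\mathbb{D}(\mathbf{a}))$ to fail rankedness. By Lemma~\ref{lem:non-pure}, non-purity of $\mathbf{a}$ produces indices $j_1<j_2<j_3$ realizing either pattern (a) $a_{j_1}<a_{j_2}<a_{j_3}$, or pattern (b) $a_{j_1}+1<a_{j_2}$ with $a_{j_1}<a_{j_3}$; the two patterns will be handled separately and aimed at parts (a) and (b) of Corollary~\ref{cor:ranked} respectively.

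For pattern (a), I would begin by refining the choice of triple: take $j_3$ minimal such that a triple with top index $j_3$ exists, then $j_2$ maximal below $j_3$ with $a_{j_2}<a_{j_3}$, then $j_1$ maximal below $j_2$ with $a_{j_1}<a_{j_2}$. The maximality conditions force the rows strictly between $j_2$ and $j_3$ to satisfy $a_j\geq a_{j_3}$ and the rows strictly between $j_1$ and $j_2$ to satisfy $a_j\geq a_{j_2}$, which controls what can interfere at the intended target window. Setting $r^*+2=j_3$, $c_1=a_{j_2}$, and $c_2=a_{j_3}$, I would apply a sequence of Kohnert moves to $\mathbb{D}(\mathbf{a})$ in three phases: first, drop cells in rows above $j_3$ down past row $r^*$ so they do not occupy the landing zones; next, arrange cells in the tall intermediate rows so that the required emptiness at $(r^*,c_1)$, $(r^*,c_2)$, and at $(r^*+1,\tilde{c})$ for $c_1<\tilde{c}<c_2$ and $\tilde{c}>c_2$ is realized while retaining the cells $(j_2,a_{j_2})$ and $(j_3,a_{j_2})$; and finally apply a single Kohnert move at row $j_3$ sending $(j_3,a_{j_3})$ to $(j_3-1,a_{j_3})$. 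Once executed, $D$ satisfies every bullet of Corollary~\ref{cor:ranked}(a).

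For pattern (b) the approach is analogous but aimed at Corollary~\ref{cor:ranked}(b): choose $c_1=a_{j_1}+1$ (so $(j_2,c_1)\in\mathbb{D}(\mathbf{a})$ by $a_{j_2}>a_{j_1}+1$) and $c_2=\max(a_{j_2},a_{j_3})$, and drop cells from higher rows so as to produce a stacked pair at column $c_1$ in the top two of three consecutive rows and an isolated pair at column $c_2$ in the top and bottom rows. The principal obstacle in both cases is the feasibility of the chosen Kohnert sequence: each move acts only on the rightmost cell of a row and drops it to the first empty position below in that column, so the order of moves matters and I must argue that at each step the intended cell is indeed rightmost in its row and the intended landing slot is empty. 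The minimality and maximality choices for the triple are precisely what secure this feasibility, because they make intermediate rows long enough both to serve as "rightmost witnesses" for dropping and to block unwanted cells from drifting into the target window. Once executability is established, verifying that the final $D$ satisfies all the bulleted conditions of Corollary~\ref{cor:ranked} is a routine inspection.
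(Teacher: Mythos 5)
There is a genuine gap. Your overall target is the same as the paper's (manufacture a diagram in $KD(\mathbb{D}(\mathbf{a}))$ to which Corollary~\ref{cor:ranked} applies), but the construction you describe attempts to build that diagram \emph{in situ}, with the three distinguished rows $j_1<j_2<j_3$ left in their original, possibly far-apart positions, and that is precisely where the argument breaks down. Setting $r^*+2=j_3$, conditions (i) and (iv) of Corollary~\ref{cor:ranked}(a) require a cell at $(j_3-1,c_1)$ and empty positions at $(j_3-2,c_1)$ and $(j_3-2,c_2)$ with $c_1=a_{j_2}$. But when $j_2<j_3-1$, your own maximality choice forces every row strictly between $j_2$ and $j_3$ to have at least $a_{j_3}>a_{j_2}$ left-justified cells, so rows $j_3-1$ and $j_3-2$ are \emph{full} across columns $1,\dots,c_2$. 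A Kohnert move can only remove the rightmost cell of a row, so to vacate $(j_3-2,a_{j_2})$ you would first have to drop every cell of row $j_3-2$ to the right of column $a_{j_2}$; those cells land below and destroy the required emptiness at $(r^*,c_2)$ and in row $r^*+1$. Likewise your final step, ``apply a single Kohnert move at row $j_3$ sending $(j_3,a_{j_3})$ to $(j_3-1,a_{j_3})$,'' is not what that move does when $a_{j_3-1}\ge a_{j_3}$: the cell falls past the occupied position $(j_3-1,a_{j_3})$. Your phrase ``retaining the cells $(j_2,a_{j_2})$ and $(j_3,a_{j_2})$'' also silently conflates row $j_2$ with row $j_3-1$, which only coincide when the triple is already consecutive. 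The assertion that the min/max choices ``secure feasibility'' is exactly the step that needs a proof, and as stated it is false for non-adjacent triples.

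The paper closes this gap with a reduction you are missing: it first proves (Lemma~\ref{lem:permute}) that swapping a row of a key diagram with a strictly shorter row below it produces another key diagram lying in $KD(\mathbb{D}(\mathbf{a}))$, and uses this twice (with $j_1$ chosen maximal, then $j_2$ minimal, then $j_3$ minimal) to reach a key diagram in which the three offending entries occupy three \emph{consecutive} rows. Only then does it run the explicit local Kohnert-move construction (Lemma~\ref{lem:keyconsec}), where rows $i$, $i+1$, $i+2$ are adjacent, the landing zones are genuinely empty, and the verification of Corollary~\ref{cor:ranked} really is routine. To repair your proposal you would need either to prove an analogue of that permutation lemma, or to give a genuinely careful feasibility argument for your in-situ move sequence --- and the latter appears to fail outright for the reasons above.
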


We prove Theorem~\ref{thm:keynecrank} following these steps:
\begin{itemize}
\item[1)] In Lemma~\ref{lem:keyconsec}, we establish the analogous result for non-pure compositions containing one of the patterns described in Lemma~\ref{lem:non-pure} where the entries are consecutive, i.e., for $j_1,\ j_2=j_1+1,\ j_3=j_1+2$. 

\item[2)] In Lemma~\ref{lem:permute}, we show that given a key diagram $\mathbb{D}(\mathbf{a})$, the key diagram obtained from $\mathbb{D}(\mathbf{a})$ by permuting a given row with a shorter row below is also an element of $\mathcal{P}(\mathbb{D}(\mathbf{a}))$.

\item[3)] Finally, given a non-pure composition $\mathbf{a}$, we show that we can always apply Lemma~\ref{lem:permute} to find a non-pure composition of the form considered in Lemma~\ref{lem:keyconsec} whose key diagram is contained in $\mathcal{P}(\mathbb{D}(\mathbf{a}))$, establishing the result.
\end{itemize}

As outlined above, first we establish the analogue of Theorem~\ref{thm:keynecrank} for non-pure compositions containing one of the patterns described in Lemma~\ref{lem:non-pure} where the entries are consecutive.

\begin{lemma}\label{lem:keyconsec}
Let $\mathbf{a}=(a_1,\hdots,a_n)$ be a non-pure composition for which there exists an index $i$ such that $1\le i<n-2$ and either
\begin{itemize}
    \item[\textup{(a)}] $a_i<a_{i+1}<a_{i+2}$ or
    \item[\textup{(b)}] $a_i+1< a_{i+1}$ and $a_i<a_{i+2}$.
\end{itemize}
Then $\mathcal{P}(\mathbb{D}(\mathbf{a}))$ is not ranked.
\end{lemma}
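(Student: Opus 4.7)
The plan is to produce, for each case of the lemma, a specific diagram $D \in KD(\mathbb{D}(\mathbf{a}))$ whose restriction to rows $i, i+1, i+2$ exhibits one of the two forbidden configurations of Corollary~\ref{cor:ranked}, whence $\mathcal{P}(\mathbb{D}(\mathbf{a}))$ is not ranked. Throughout I will use that each row $k$ of a key diagram consists of the left-justified cells $(k,1),\ldots,(k,a_k)$, so the destination of any Kohnert move at rows $i+1$ or $i+2$ along the way is determined by comparing the target column with the current contents of the row immediately below.

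For case (a), where $a_i < a_{i+1} < a_{i+2}$, I first apply $a_{i+2} - a_{i+1}$ Kohnert moves at row $i+2$. Each move picks up the current rightmost cell of row $i+2$, sitting in some column $\tilde{c} > a_{i+1}$, and drops it down to $(i+1, \tilde{c})$, which is empty at the time of the move. After these moves row $i+2$ is $\{1,\ldots,a_{i+1}\}$ and row $i+1$ is $\{1,\ldots,a_{i+2}\}$. I then apply $a_{i+2} - a_{i+1} - 1$ Kohnert moves at row $i+1$, each of which drops a rightmost cell from some column $\tilde{c} > a_{i+1}+1$ into $(i, \tilde{c})$; that square is empty because $a_i < a_{i+1} < \tilde{c}$. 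The resulting $D$ has row $i+1$ equal to $\{1,\ldots,a_{i+1}+1\}$ and row $i+2$ equal to $\{1,\ldots,a_{i+1}\}$, so Corollary~\ref{cor:ranked}(a) applies with $r^{*} = i$, $c_1 = a_{i+1}$, $c_2 = a_{i+1}+1$.

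For case (b), if $a_{i+1} < a_{i+2}$ the argument of case (a) applies, so I may assume $a_{i+2} \le a_{i+1}$ along with $a_i + 1 < a_{i+1}$ and $a_i < a_{i+2}$. When $a_{i+2} < a_{i+1}$, I apply $a_{i+1} - a_{i+2} - 1$ Kohnert moves at row $i+1$; each cascades a rightmost cell from a column $\tilde{c} \in \{a_{i+2}+2, \ldots, a_{i+1}\}$ down to $(i, \tilde{c})$, which is empty since $a_i < a_{i+2} < \tilde{c}$. The resulting $D$ has row $i+1$ equal to $\{1, \ldots, a_{i+2}+1\}$ and row $i+2$ unchanged, and Corollary~\ref{cor:ranked}(a) applies with $c_1 = a_{i+2}$, $c_2 = a_{i+2}+1$. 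When $a_{i+2} = a_{i+1}$, a single Kohnert move at row $i+1$ drops $(i+1, a_{i+1})$ down to $(i, a_{i+1})$; then Corollary~\ref{cor:ranked}(b) applies with $c_1 = a_{i+1} - 1$, $c_2 = a_{i+1}$, the new cell at $(i, a_{i+1})$ playing the role of $(r^{*}, c_2)$ and the required emptiness $(i, a_{i+1} - 1) \notin D$ being exactly what the hypothesis $a_i + 1 < a_{i+1}$ provides.

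The main obstacle is the bookkeeping needed to ensure that each proposed sequence of Kohnert moves proceeds as described: at each step the expected cell must be the rightmost of its row, and the intended target immediately below must remain empty. In each case above this reduces to observing that the cells cascading into row $i$ (or into row $i+1$ during the first stage of case (a)) form a single contiguous block of columns strictly to the right of $a_i$ (respectively, strictly to the right of $a_{i+1}$), which follows immediately from the strict inequalities in the corresponding case hypotheses.
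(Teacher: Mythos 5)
Your proposal is correct and follows essentially the same route as the paper's proof: the identical Kohnert-move sequences in each case (including the same subdivision of case (b) into $a_{i+2}=a_{i+1}$ and $a_{i+2}<a_{i+1}$ after reducing $a_{i+2}>a_{i+1}$ to case (a)), and the same applications of Corollary~\ref{cor:ranked} with the same choices of $r^*$, $c_1$, $c_2$. The only differences are presentational, e.g.\ your explicit tracking of the row contents versus the paper's figures.
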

\begin{proof} We consider non-pure compositions containing patterns of the form (a) and (b) separately.
\begin{enumerate}
    \item[(a)] Let $D$ denote the diagram formed from $\mathbb{D}(\mathbf{a})$ by applying $a_{i+2}-a_{i+1}$ Kohnert moves at row $i+2$ followed by $a_{i+2}-a_{i+1}-1$ Kohnert moves at row $i+1$; that is, $D$ is the diagram formed from $\mathbb{D}(\mathbf{a})$ by lowering the cells of row $i+2$ that are not above cells in row $i+1$ down to row $i+1$, and then further lowering all those cells except the leftmost one down to row $i$. Illustrations of the pertinent portions of (left) $\mathbb{D}(\mathbf{a})$ and (right) $D$ can be found in Figure~\ref{fig:lemkeyconseca}, where a region filled with $\dagger~\dagger~\dagger$ is either empty or filled with cells.

    \begin{figure}[H]
        \centering
        $$\scalebox{0.75}{\begin{tikzpicture}[scale=0.65]
    \node at (0.5,3.5) {$\bigtimes$};
    \node at (3.5,3.5) {$\bigtimes$};
    \node at (5,3.5) {$\hdots$};
    \node at (6.5,3.5) {$\bigtimes$};
    \node at (-0.75,4.5) {$i+2$};
    \node at (-0.75,3.5) {$i+1$};
    \node at (-0.75,2.5) {$i$};
    \node at (2,3.5) {$\hdots$};

    \node at (0.5,4.5) {$\bigtimes$};
    \node at (3.5,4.5) {$\bigtimes$};
    \node at (5,4.5) {$\hdots$};
    \node at (6.5,4.5) {$\bigtimes$};
    \node at (2,4.5) {$\hdots$};
    \node at (7.5,4.5) {$\bigtimes$};
    \node at (9,4.5) {$\dagger~\dagger~\dagger$};
    
    \node at (0.5,2.5) {$\bigtimes$};
    \node at (2,2.5) {$\hdots$};
    \node at (3.5,2.5) {$\bigtimes$};
    \node at (5,7) {(key diagram)};
    \node at (5,1) {(key diagram)};
    \draw (0,8)--(0,0)--(12,0);
    \filldraw[draw=lightgray,fill=lightgray] (4,2) rectangle (11.5,3);
    \filldraw[draw=lightgray,fill=lightgray] (7,3) rectangle (11.5,4);
    \filldraw[draw=lightgray,fill=lightgray] (10,4) rectangle (11.5,5);
    \draw (0,2)--(4,2)--(4,3)--(0,3);
    \draw (1,2)--(1,3);
    \draw (3,2)--(3,3);
    \draw (0,3)--(7,3)--(7,4)--(0,4);
    \draw (1,3)--(1,4);
    \draw (3,3)--(3,4);
    \draw (4,3)--(4,4);
    \draw (6,3)--(6,4);

    \draw (0,4)--(10,4)--(10,5)--(0,5);
    \draw (1,4)--(1,5);
    \draw (3,4)--(3,5);
    \draw (4,4)--(4,5);
    \draw (6,4)--(6,5);
    \draw (7,4)--(7,5);
    \draw (8,4)--(8,5);
    \node at (6,-1) {$\mathbb{D}(\mathbf{a})$};
\end{tikzpicture}}\quad\quad\begin{tikzpicture}
    \node at (0,0){};
    \node at (0,2) {$\rightarrow$};
\end{tikzpicture}\quad\quad\scalebox{0.75}{\begin{tikzpicture}[scale=0.65]
    \node at (0.5,3.5) {$\bigtimes$};
    \node at (3.5,3.5) {$\bigtimes$};
    \node at (5,3.5) {$\hdots$};
    \node at (6.5,3.5) {$\bigtimes$};
    \node at (7.5,3.5) {$\bigtimes$};
    \node at (6.5,4.5) {$\bigtimes$};
    \node at (-0.75,4.5) {$i+2$};
    \node at (-0.75,3.5) {$i+1$};
    \node at (-0.75,2.5) {$i$};
    \node at (2,3.5) {$\hdots$};

    \node at (0.5,4.5) {$\bigtimes$};
    \node at (3.5,4.5) {$\bigtimes$};
    \node at (5,4.5) {$\hdots$};
    \node at (6.5,4.5) {$\bigtimes$};
    \node at (2,4.5) {$\hdots$};
    \node at (9.5,4.5) {$\bigtimes$};
    \node at (8,4.5) {$\hdots$};
    
    \node at (0.5,2.5) {$\bigtimes$};
    \node at (2,2.5) {$\hdots$};
    \node at (3.5,2.5) {$\bigtimes$};
    \node at (9,2.5) {$\dagger~\dagger~\dagger$};
    \node at (5,7) {(key diagram)};
    \node at (5,1) {(key diagram)};
    \draw (0,8)--(0,0)--(12,0);
    \filldraw[draw=lightgray,fill=lightgray] (10,2) rectangle (11.5,3);
    \filldraw[draw=lightgray,fill=lightgray] (8,3) rectangle (11.5,4);
    \filldraw[draw=lightgray,fill=lightgray] (7,4) rectangle (11.5,5);
    \filldraw[draw=lightgray,fill=lightgray] (4,2) rectangle (8,3);
    \draw (0,2)--(4,2)--(4,3)--(0,3);
    \draw (1,2)--(1,3);
    \draw (3,2)--(3,3);
    \draw (0,3)--(7,3)--(7,4)--(0,4);
    \draw (1,3)--(1,4);
    \draw (3,3)--(3,4);
    \draw (4,3)--(4,4);
    \draw (6,3)--(6,4);
    \draw (7,3)--(8,3)--(8,4)--(7,4);
    \draw (8,2)--(10,2)--(10,3)--(8,3)--(8,2);

    \draw (0,4)--(7,4)--(7,5)--(0,5);
    \draw (1,4)--(1,5);
    \draw (3,4)--(3,5);
    \draw (4,4)--(4,5);
    \draw (6,4)--(6,5);
    \node at (6,-1) {$D$};
\end{tikzpicture}}$$
        \caption{Lemma~\ref{lem:keyconsec}~(a)}
        \label{fig:lemkeyconseca}
    \end{figure}

    Note that
    \begin{itemize}
        \item $(i+2,a_{i+1}),(i+1,a_{i+1}),(i+1,a_{i+1}+1)\in D$,
        \item $(i+2,\tilde{c})\notin D$ for $\tilde{c}>a_{i+1}$,
        \item $(i+1,\tilde{c})\notin D$ for $\tilde{c}>a_{i+1}+1$, and
        \item $(i,a_{i+1}),(i,a_{i+1}+1)\notin D$.
    \end{itemize}
    Thus, we apply Corollary~\ref{cor:ranked}~(a) with $r^*=i$, $c_1=a_{i+1}$, and $c_2=a_{i+1}+1$ to conclude that $\mathcal{P}(\mathbb{D}(\mathbf{a}))$ is not ranked.

    \item[(b)] Note that if $a_{i+2}>a_{i+1}$, then $\mathbf{a}$ is of the form considered in (a). There remain two cases depending on the relationship between $a_{i+1}$ and $a_{i+2}$.
    \bigskip

    \noindent
    \textbf{Case 1:} If $a_{i+2}=a_{i+1}$, then let $D$ denote the diagram formed from $\mathbb{D}(\mathbf{a})$ by applying a single Kohnert move at row $i+1$, i.e., $D= \mathbb{D}(\mathbf{a}) \ldownarrow^{(i+1,a_{i+1})}_{(i,a_{i+1})}$. Illustrations of the pertinent portions of (left) $\mathbb{D}(\mathbf{a})$ and (right) $D$ can be found in Figure~\ref{fig:lemkeyconsecb1}. 
    
    \begin{figure}[H]
        \centering
        $$\scalebox{0.75}{\begin{tikzpicture}[scale=0.65]
    \node at (0.5,3.5) {$\bigtimes$};
    \node at (3.5,3.5) {$\bigtimes$};
    \node at (5,3.5) {$\hdots$};
    \node at (6.5,3.5) {$\bigtimes$};
    \node at (7.5,3.5) {$\bigtimes$};
    \node at (-0.75,4.5) {$i+2$};
    \node at (-0.75,3.5) {$i+1$};
    \node at (-0.75,2.5) {$i$};
    \node at (2,3.5) {$\hdots$};

    \node at (0.5,4.5) {$\bigtimes$};
    \node at (3.5,4.5) {$\bigtimes$};
    \node at (5,4.5) {$\hdots$};
    \node at (6.5,4.5) {$\bigtimes$};
    \node at (2,4.5) {$\hdots$};
    \node at (7.5,4.5) {$\bigtimes$};
    
    \node at (0.5,2.5) {$\bigtimes$};
    \node at (2,2.5) {$\hdots$};
    \node at (3.5,2.5) {$\bigtimes$};
    \node at (5,7) {(key diagram)};
    \node at (5,1) {(key diagram)};
    \draw (0,8)--(0,0)--(10,0);
    \filldraw[draw=lightgray,fill=lightgray] (4,2) rectangle (9.5,3);
    \filldraw[draw=lightgray,fill=lightgray] (8,3) rectangle (9.5,4);
    \filldraw[draw=lightgray,fill=lightgray] (8,4) rectangle (9.5,5);
    \draw (0,2)--(4,2)--(4,3)--(0,3);
    \draw (1,2)--(1,3);
    \draw (3,2)--(3,3);
    \draw (0,3)--(8,3)--(8,4)--(0,4);
    \draw (1,3)--(1,4);
    \draw (3,3)--(3,4);
    \draw (4,3)--(4,4);
    \draw (6,3)--(6,4);
    \draw (7,3)--(7,4);

    \draw (0,4)--(8,4)--(8,5)--(0,5);
    \draw (1,4)--(1,5);
    \draw (3,4)--(3,5);
    \draw (4,4)--(4,5);
    \draw (6,4)--(6,5);
    \draw (7,4)--(7,5);
    \draw (8,4)--(8,5);
    \node at (5,-1) {$\mathbb{D}(\mathbf{a})$};
\end{tikzpicture}}\quad\quad\begin{tikzpicture}
    \node at (0,0){};
    \node at (0,2) {$\rightarrow$};
\end{tikzpicture}\quad\quad\scalebox{0.75}{\begin{tikzpicture}[scale=0.65]
    \node at (0.5,3.5) {$\bigtimes$};
    \node at (3.5,3.5) {$\bigtimes$};
    \node at (5,3.5) {$\hdots$};
    \node at (6.5,3.5) {$\bigtimes$};
    \node at (7.5,2.5) {$\bigtimes$};
    \node at (-0.75,4.5) {$i+2$};
    \node at (-0.75,3.5) {$i+1$};
    \node at (-0.75,2.5) {$i$};
    \node at (2,3.5) {$\hdots$};

    \node at (0.5,4.5) {$\bigtimes$};
    \node at (3.5,4.5) {$\bigtimes$};
    \node at (5,4.5) {$\hdots$};
    \node at (6.5,4.5) {$\bigtimes$};
    \node at (2,4.5) {$\hdots$};
    \node at (7.5,4.5) {$\bigtimes$};
    
    \node at (0.5,2.5) {$\bigtimes$};
    \node at (2,2.5) {$\hdots$};
    \node at (3.5,2.5) {$\bigtimes$};
    \node at (5,7) {(key diagram)};
    \node at (5,1) {(key diagram)};
    \draw (0,8)--(0,0)--(10,0);
    \filldraw[draw=lightgray,fill=lightgray] (4,2) rectangle (7,3);
    \filldraw[draw=lightgray,fill=lightgray] (8,2) rectangle (9.5,3);
    \filldraw[draw=lightgray,fill=lightgray] (7,3) rectangle (9.5,4);
    \filldraw[draw=lightgray,fill=lightgray] (8,4) rectangle (9.5,5);
    \draw (0,2)--(4,2)--(4,3)--(0,3);
    \draw (1,2)--(1,3);
    \draw (3,2)--(3,3);
    \draw (0,3)--(7,3)--(7,4)--(0,4);
    \draw (1,3)--(1,4);
    \draw (3,3)--(3,4);
    \draw (4,3)--(4,4);
    \draw (6,3)--(6,4);
    \draw (7,2)--(8,2)--(8,3)--(7,3)--(7,2);

    \draw (0,4)--(8,4)--(8,5)--(0,5);
    \draw (1,4)--(1,5);
    \draw (3,4)--(3,5);
    \draw (4,4)--(4,5);
    \draw (6,4)--(6,5);
    \draw (7,4)--(7,5);
    \draw (8,4)--(8,5);
    \node at (5,-1) {$D$};
\end{tikzpicture}}$$
        \caption{Lemma~\ref{lem:keyconsec}~(b) Case 1}
        \label{fig:lemkeyconsecb1}
    \end{figure}

    Note that
    \begin{itemize}
        \item $(i+2,a_{i+2}),(i+2,a_{i+2}-1),(i+1,a_{i+2}-1),(i,a_{i+2})\in D$,
        \item $(i+2,\tilde{c})\notin D$ for $\tilde{c}>a_{i+2}$,
        \item $(i+1,\tilde{c})\notin D$ for $\tilde{c}>a_{i+2}-1$,
        \item $(i,\tilde{c})\notin D$ for $\tilde{c}>a_{i+2}$, and
        \item $(i,a_{i+2}-1)\notin D$.
    \end{itemize}
    Thus, we apply Corollary~\ref{cor:ranked}~(b) with $r^*=i$, $c_1=a_{i+2}-1$, and $c_2=a_{i+2}$ to conclude that $\mathcal{P}(\mathbb{D}(\mathbf{a}))$ is not ranked. 
    \bigskip

    \noindent
    \textbf{Case 2:} If $a_{i+2}<a_{i+1}$, then let $D$ denote the diagram formed from $\mathbb{D}(\mathbf{a})$ by applying $a_{i+1}-a_{i+2}-1$ Kohnert moves at row $i+1$; that is, $D$ is the diagram formed from $\mathbb{D}(\mathbf{a})$ by moving all but the leftmost cell in row $i+1$ which has no cell above it in row $i+2$ down to row $i$. Illustrations of the pertinent portions of (left) $\mathbb{D}(\mathbf{a})$ and (right) $D$ can be found in Figure~\ref{fig:lemkeyconsecb2}, where a region filled with $\dagger~\dagger~\dagger$ is either empty or filled with cells.

    \begin{figure}[H]
        \centering
        $$\scalebox{0.75}{\begin{tikzpicture}[scale=0.65]
    \node at (0.5,4.5) {$\bigtimes$};
    \node at (3.5,4.5) {$\bigtimes$};
    \node at (5,4.5) {$\hdots$};
    \node at (6.5,4.5) {$\bigtimes$};
    \node at (-0.75,4.5) {$i+2$};
    \node at (-0.75,3.5) {$i+1$};
    \node at (-0.75,2.5) {$i$};
    \node at (2,4.5) {$\hdots$};

    \node at (0.5,3.5) {$\bigtimes$};
    \node at (3.5,3.5) {$\bigtimes$};
    \node at (5,3.5) {$\hdots$};
    \node at (6.5,3.5) {$\bigtimes$};
    \node at (2,3.5) {$\hdots$};
    \node at (7.5,3.5) {$\bigtimes$};
    \node at (9,3.5) {$\dagger~\dagger~\dagger$};
    
    \node at (0.5,2.5) {$\bigtimes$};
    \node at (2,2.5) {$\hdots$};
    \node at (3.5,2.5) {$\bigtimes$};
    \node at (5,7) {(key diagram)};
    \node at (5,1) {(key diagram)};
    \draw (0,8)--(0,0)--(12,0);
    \filldraw[draw=lightgray,fill=lightgray] (4,2) rectangle (11.5,3);
    \filldraw[draw=lightgray,fill=lightgray] (10,3) rectangle (11.5,4);
    \filldraw[draw=lightgray,fill=lightgray] (7,4) rectangle (11.5,5);
    \draw (0,2)--(4,2)--(4,3)--(0,3);
    \draw (1,2)--(1,3);
    \draw (3,2)--(3,3);
    \draw (0,4)--(7,4)--(7,5)--(0,5);
    \draw (1,4)--(1,5);
    \draw (3,4)--(3,5);
    \draw (4,4)--(4,5);
    \draw (6,4)--(6,5);

    \draw (0,3)--(10,3)--(10,4)--(0,4);
    \draw (1,3)--(1,4);
    \draw (3,3)--(3,4);
    \draw (4,3)--(4,4);
    \draw (6,3)--(6,4);
    \draw (7,3)--(7,4);
    \draw (8,3)--(8,4);
    \node at (6, -1) {$\mathbb{D}(\mathbf{a})$};
\end{tikzpicture}}
\quad\quad
\begin{tikzpicture}
    \node at (0,0){};
    \node at (0,2) {$\rightarrow$};
\end{tikzpicture}
\quad\quad\
\scalebox{0.75}{\begin{tikzpicture}[scale=0.65]
    \node at (0.5,3.5) {$\bigtimes$};
    \node at (3.5,3.5) {$\bigtimes$};
    \node at (5,3.5) {$\hdots$};
    \node at (6.5,3.5) {$\bigtimes$};
    \node at (7.5,3.5) {$\bigtimes$};
    \node at (6.5,4.5) {$\bigtimes$};
    \node at (-0.75,4.5) {$i+2$};
    \node at (-0.75,3.5) {$i+1$};
    \node at (-0.75,2.5) {$i$};
    \node at (2,3.5) {$\hdots$};

    \node at (0.5,4.5) {$\bigtimes$};
    \node at (3.5,4.5) {$\bigtimes$};
    \node at (5,4.5) {$\hdots$};
    \node at (6.5,4.5) {$\bigtimes$};
    \node at (2,4.5) {$\hdots$};
    \node at (9.5,4.5) {$\bigtimes$};
    \node at (8,4.5) {$\hdots$};
    
    \node at (0.5,2.5) {$\bigtimes$};
    \node at (2,2.5) {$\hdots$};
    \node at (3.5,2.5) {$\bigtimes$};
    \node at (9,2.5) {$\dagger~\dagger~\dagger$};
    \node at (5,7) {(key diagram)};
    \node at (5,1) {(key diagram)};
    \draw (0,8)--(0,0)--(12,0);
    \filldraw[draw=lightgray,fill=lightgray] (10,2) rectangle (11.5,3);
    \filldraw[draw=lightgray,fill=lightgray] (8,3) rectangle (11.5,4);
    \filldraw[draw=lightgray,fill=lightgray] (7,4) rectangle (11.5,5);
    \filldraw[draw=lightgray,fill=lightgray] (4,2) rectangle (8,3);
    \draw (0,2)--(4,2)--(4,3)--(0,3);
    \draw (1,2)--(1,3);
    \draw (3,2)--(3,3);
    \draw (0,3)--(7,3)--(7,4)--(0,4);
    \draw (1,3)--(1,4);
    \draw (3,3)--(3,4);
    \draw (4,3)--(4,4);
    \draw (6,3)--(6,4);
    \draw (7,3)--(8,3)--(8,4)--(7,4);
    \draw (8,2)--(10,2)--(10,3)--(8,3)--(8,2);

    \draw (0,4)--(7,4)--(7,5)--(0,5);
    \draw (1,4)--(1,5);
    \draw (3,4)--(3,5);
    \draw (4,4)--(4,5);
    \draw (6,4)--(6,5);
    \node at (6, -1) {$D$};
\end{tikzpicture}}$$
        \caption{Lemma~\ref{lem:keyconsec}~(b) Case 2}
        \label{fig:lemkeyconsecb2}
    \end{figure}

    Note that
    \begin{itemize}
        \item $(i+2,a_{i+2}),(i+1,a_{i+2}),(i+1,a_{i+2}+1)\in D$,
        \item $(i+2,\tilde{c})\notin D$ for $\tilde{c}>a_{i+2}$,
        \item $(i+1,\tilde{c})\notin D$ for $\tilde{c}>a_{i+2}+1$, and
        \item $(i,a_{i+2}),(i,a_{i+2}+1)\notin D$.
    \end{itemize}
     Thus, we apply Corollary~\ref{cor:ranked}~(a) with $r^*=i$, $c_1=a_{i+2}$, and $c_2=a_{i+2}+1$ to conclude that $\mathcal{P}(\mathbb{D}(\mathbf{a}))$ is not ranked.
\end{enumerate}

\end{proof}

Next, we show that the key diagram obtained by permuting a given row of a key diagram $\mathbb{D}(\mathbf{a})$ with a shorter row below is an element of $\mathcal{P}(\mathbb{D}(\mathbf{a}))$. To aid in exposition, we adopt the following notational convention. Given a weak composition $\mathbf{a}=(a_1,\hdots,a_n)$, we denote by $\mathbf{a}s_{i,j}$ the weak composition obtained from $\mathbf{a}$ by exchanging the entries $a_i$ and $a_j$; that is, $\mathbf{a}s_{i,j} =(a_1,\hdots,a_{i-1}, a_j,a_{i+1}, \hdots, a_{j-1}, a_i, a_{j+1}, \hdots, a_n).$

\begin{lemma}\label{lem:permute}
    Let $\mathbf{a}=(a_1,\hdots,a_n)$ be a weak composition. Suppose that there exist two indices $i,\ j$ such that $1\le i<j\le n$ and $a_i<a_j$. Then $\mathbb{D}(\mathbf{a}s_{i,j})\in KD(\mathbb{D}(\mathbf{a}))$. 
\end{lemma}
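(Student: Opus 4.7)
The plan is to exhibit an explicit sequence of Kohnert moves transforming $\mathbb{D}(\mathbf{a})$ into $\mathbb{D}(\mathbf{a}s_{i,j})$. Comparing the two key diagrams, they agree in every row except $i$ and $j$: in $\mathbb{D}(\mathbf{a}s_{i,j})$ the cells $(j,a_i+1),\ldots,(j,a_j)$ have been replaced by $(i,a_i+1),\ldots,(i,a_j)$. So the task is, for each column $c\in\{a_i+1,\ldots,a_j\}$, to move a single cell in column $c$ from row $j$ down to row $i$ while leaving every other cell fixed; since $c>a_i$, row $i$ is genuinely empty in column $c$, and the column sum in column $c$ is preserved.

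I would process the columns from right to left, handling $c=a_j,\,a_j-1,\,\ldots,\,a_i+1$ in turn. For a fixed $c$, let
$$i=r_0^{(c)} < r_1^{(c)} < \cdots < r_{k_c}^{(c)}$$
enumerate the rows $r$ with $i\le r<j$ and $a_r<c$; this set is nonempty since it contains $i$ itself. The cell at $(j,c)$ is then pushed down this ``landing ladder'' by the sequence of Kohnert moves first at row $j$, and then, if $k_c\ge 1$, successively at rows $r_{k_c}^{(c)},\,r_{k_c-1}^{(c)},\,\ldots,\,r_1^{(c)}$. Each such move lowers the tracked cell from its current ladder rung to the next one below, since every row strictly between consecutive rungs has $a_r\ge c$ and contributes a blocking cell in column $c$; the final move deposits the cell at $(i,c)$.

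The main point to verify is that at every stage the cell being moved is in fact the rightmost cell in its row, so that the Kohnert move acts as intended. For the initial move at row $j$: by the time we start processing column $c$, all cells originally at $(j,c')$ with $c'>c$ have already been relocated to $(i,c')$ in earlier rounds, so row $j$ consists exactly of $(j,1),\ldots,(j,c)$, making $(j,c)$ rightmost. For a subsequent move at an intermediate row $r_l^{(c)}$: any cells that passed through $r_l^{(c)}$ during the processing of an earlier column $c'>c$ finished their cascade at $(i,c')$ by the end of that round, so row $r_l^{(c)}$ currently holds only its original cells $(r_l^{(c)},1),\ldots,(r_l^{(c)},a_{r_l^{(c)}})$ together with the just-deposited cell at column $c$; since $c>a_{r_l^{(c)}}$, this latter cell is rightmost. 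Once every column has been processed, row $i$ contains $(i,1),\ldots,(i,a_j)$, row $j$ contains $(j,1),\ldots,(j,a_i)$, all other rows are unchanged, and the resulting diagram is $\mathbb{D}(\mathbf{a}s_{i,j})$, as required.
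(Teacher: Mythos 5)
Your proposal is correct and follows essentially the same route as the paper's proof: process the columns $c=a_j,\ldots,a_i+1$ from right to left and, for each, cascade the cell at $(j,c)$ down through the empty positions of column $c$ by a Kohnert move at row $j$ followed by moves at the intermediate empty rows in decreasing order. Your verification that the tracked cell is rightmost in its row at each stage supplies the detail the paper leaves as ``straightforward to verify.''
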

\begin{proof}
Intuitively, the idea is that for each additional cell in row $j$, working from right to left, we can apply Kohnert moves to drop the cell through the empty positions below until it reaches row $i$.

To formalize the intuitive idea outlined above, for $l$ satisfying $a_i<l\le a_j$, let $$\{(r,l)~|~i<r<j,~(r,l)\notin \mathbb{D}(\mathbf{a})\}=\{r^l_1<\hdots<r^l_{k_l}\},$$ i.e., the rows $r^l_1,\hdots,r^l_{k_l}$ constitute exactly the rows containing empty positions in column $l$ strictly between rows $i$ and $j$ in $\mathbb{D}(\mathbf{a})$. Now, working from $l=a_j$ down to $a_{i}+1$ in decreasing order, successively apply a single Kohnert move at row $j$ followed by rows $r^l_{k_l}$ down to $r^l_1$ in decreasing order. It is straightforward to verify that the aforementioned formal procedure accomplishes the desired effect.
\end{proof}

We are now ready to prove Theorem~\ref{thm:keynecrank}. 

\begin{proof}[Proof of Theorem~\ref{thm:keynecrank}]
We consider non-pure compositions $\mathbf{a}$ containing patterns of the form (a) and (b) separately. In each case, we show that there exists a key diagram $T\in KD(\mathbb{D}(\mathbf{a}))$ satisfying the hypotheses of Lemma~\ref{lem:keyconsec} so that, considering the proof of Lemma~\ref{lem:keyconsec}, Corollary~\ref{cor:ranked} applies and $\mathcal{P}(\mathbb{D}(\mathbf{a}))$ is not ranked.
Also, in both cases, we assume without loss of generality that $j_1$ is maximal, while $j_2$ is minimal given our choice of $j_1$, and $j_3$ is minimal given our choice of $j_2$.
\begin{enumerate}
\item[(a)] Note that, by our assumption on $j_1$, for all $j_1<i<j_2$ we have $a_i>a_{j_1}$. Thus, applying Lemma~\ref{lem:permute}, $\mathbb{D}(\mathbf{a}s_{j_1,j_2-1})\in KD(\mathbb{D}(\mathbf{a}))$. Now, by our assumption on $j_3$, for all $j_2<i<j_3$ we have $a_i<a_{j_3}$. Therefore, applying Lemma~\ref{lem:permute} again, we find that $\mathbb{D}(\mathbf{a}s_{j_1,j_2-1}s_{j_2+1,j_3})\in KD(\mathbb{D}(\mathbf{a}))$. As the values $a_{j_1},a_{j_2},a_{j_3}$ are consecutive and occur in the original ordering in $\mathbf{a}s_{j_1,j_2-1}s_{j_2+1,j_3}$, the result follows as discussed above.

\item[(b)] We need only consider the case not covered by (a). Thus, there are no indices $1\le i_1<i_2<i_3\le n$ such that $a_{i_1}<a_{i_2}<a_{i_3}$. Consequently, $a_{j_1}+1<a_{j_2}$ and $a_{j_1}<a_{j_3}\le a_{j_2}$. Note that, by our assumption on $j_1$, for all $j_1<i<j_2$ we have $a_i>a_{j_1}$. Thus, applying Lemma~\ref{lem:permute}, $\mathbb{D}(\mathbf{a}s_{j_1,j_2-1})\in KD(\mathbb{D}(\mathbf{a}))$. Now, by our assumption on $j_3$, for all $j_2<i<j_3$ we have $a_i\le a_{j_1}<a_{j_3}$. Therefore, applying Lemma~\ref{lem:permute} again, we find that $\mathbb{D}(\mathbf{a}s_{j_1,j_2-1}s_{j_2+1,j_3})\in KD(\mathbb{D}(\mathbf{a}))$. As the values $a_{j_1},a_{j_2},a_{j_3}$ are consecutive and occur in the original ordering in $\mathbf{a}s_{j_1,j_2-1}s_{j_2+1,j_3}$, the result follows as discussed above.
\end{enumerate}
\end{proof}

\subsection{Proof of the sufficient conditions}

In this section, we finish the proof of Theorem~\ref{thm:keyrank}.

To complete the proof of Theorem~\ref{thm:keyrank}, it remains to show that if $\mathbf{a}$ is a pure composition, then $\mathcal{P}(\mathbb{D}(\mathbf{a}))$ is ranked. We accomplish this in Lemma~\ref{lem:keysuff} below by showing that a rank function is provided by $rk(D)=rowsum(D)-b(\mathbb{D}(\mathbf{a}))$ for $D\in KD(\mathbb{D}(\mathbf{a}))$.

\begin{remark}
    Note that for a key diagram $\mathbb{D}(\mathbf{a})$, we have $b(\mathbb{D}(\mathbf{a}))=rowsum(\mathbb{D}(\texttt{sort}(\mathbf{a})))$.
\end{remark}

\begin{lemma}\label{lem:keysuff}
    If $\mathbf{a}$ is a pure composition, then $\mathcal{P}(\mathbb{D}(\mathbf{a}))$ is ranked. Moreover, for $D\in \mathbb{D}(\mathbf{a})$, a rank function is given by $rk(D)=rowsum(D)-b(\mathbb{D}(\mathbf{a}))$.
\end{lemma}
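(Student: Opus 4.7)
The plan is to show that $rk$ is a rank function on the bounded poset $\mathcal{P}(\mathbb{D}(\mathbf{a}))$. Monotonicity ($D_1 \prec D_2 \Rightarrow rk(D_1) < rk(D_2)$) is automatic, since nontrivial Kohnert moves strictly decrease $rowsum$ and Corollary~\ref{cor:permutingall} identifies $b(\mathbb{D}(\mathbf{a}))$ with the minimum value of $rowsum$ on $KD(\mathbb{D}(\mathbf{a}))$. The entire content of the claim therefore lies in the covering condition: every cover $D_2 \precdot D_1$ must drop $rowsum$ by exactly one. Arguing by contradiction, suppose $D_2 = D_1\ldownarrow^{(r,c)}_{(r',c)}$ is a cover with $r - r' \geq 2$, so $(r,c)$ is rightmost in row $r$ of $D_1$, the cells $(r-1,c),\ldots,(r'+1,c)$ all lie in $D_1$, and $(r',c)\notin D_1$. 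If we can produce some row $\rho$ with $r' < \rho < r$ such that $(\rho,c)$ is the rightmost cell in row $\rho$ of $D_1$, then $\tilde D := D_1\ldownarrow^{(\rho,c)}_{(r',c)}$ is a valid Kohnert move, and a direct calculation shows $\tilde D\ldownarrow^{(r,c)}_{(\rho,c)} = D_2$, so $D_2 \prec \tilde D \prec D_1$ contradicts the cover relation. Everything therefore reduces to producing the witness row $\rho$.

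The key structural tool is the decomposition of $\mathbf{a}$ from Lemma~\ref{lem:purecomp} into subcompositions $\alpha_0,\ldots,\alpha_m$ with $\min(\alpha_{j-1}) \geq \max(\alpha_j)$. Let $R_j$ denote the set of rows supporting $\alpha_j$. Iterating the inequality forces every column $c \leq \max(\alpha_j)$ to be completely filled throughout the rows of $R_0 \cup \cdots \cup R_{j-1}$ in $\mathbb{D}(\mathbf{a})$, and since these cells have no empty position below to drop into, they stay fixed under every Kohnert move. Consequently a Kohnert move acting on a cell of layer $R_j$ cannot leave $R_j$ (it would need to fall into a column that is permanently full further down), which yields a poset isomorphism $\mathcal{P}(\mathbb{D}(\mathbf{a})) \cong \prod_{j=0}^m \mathcal{P}(\mathbb{D}(\alpha_j))$ under which $rk$ decomposes as the sum of the layerwise candidate rank functions. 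Since covers in a product poset occur in exactly one factor, and since a product of ranked posets is ranked, the problem reduces to verifying the theorem within each factor $\mathcal{P}(\mathbb{D}(\alpha_j))$ separately.

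The per-layer verification is a short case analysis on the four types from Lemma~\ref{lem:purecomp}. Type (i) is a single-element poset. For type (iii), every column of index greater than the second-largest entry of $\alpha_j$ contains a single cell, and every column of smaller index is fully occupied in every reachable diagram; so Kohnert moves in the ``tall'' columns drop by exactly one row and moves in the ``full'' columns do not exist at all. For types (ii) and (iv), multi-row drops can occur, but only in the column $\max(\alpha_j) = p+1$; in that column cells are automatically rightmost in their rows since no columns lie further right, so the hypothesis $(r-1,c)\in D_1$ immediately exhibits $\rho = r-1$ as the required witness. The main obstacle is the layer-decomposition step itself, where one must verify both the set-level bijection and its compatibility with cover relations; once that is in hand, the per-type checks reduce to the short observations above and together show that $rk$ is a rank function on $\mathcal{P}(\mathbb{D}(\mathbf{a}))$.
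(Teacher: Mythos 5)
Your overall architecture is sound and is essentially an elaborated version of the paper's argument: both proofs reduce the covering condition to producing an intermediate cell $(\rho,c)$, $r'<\rho<r$, that is rightmost in its row, and both localize the analysis to the blocks $\alpha_j$ of Lemma~\ref{lem:purecomp} using the observation that columns $c\le\max(\alpha_j)$ are permanently full below the block. Your explicit product decomposition $\mathcal{P}(\mathbb{D}(\mathbf{a}))\cong\prod_j\mathcal{P}(\mathbb{D}(\alpha_j))$ goes a bit further than the paper (which only needs that the three cells $(r,c),(r',c),(r',c')$ live in a single block), and it is a correct and arguably cleaner way to organize the reduction, at the cost of having to verify the set bijection and order compatibility that you flag.

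The genuine gap is in your type (iii) case. The dichotomy you assert --- every column is either ``tall'' with a single cell or ``fully occupied in every reachable diagram, so no moves exist there'' --- is false when the weakly decreasing part of a type (iii) block is not constant. Take $\alpha_j=(10,5,2,8)$, which is pure and of type (iii) since $10\ge 5\ge 2<8-1$. Its second-largest entry is $8$, yet column $5$ contains cells in rows $1,2,4$ of the block and column $8$ contains cells only in rows $1,4$; neither is full, and nontrivial moves such as $(4,8)\ldownarrow$ to $(3,8)$ certainly exist in these columns. So the assertion ``moves in the full columns do not exist at all'' does not cover the moves that actually occur, and your case analysis does not establish that every cover in a type (iii) factor drops $rowsum$ by exactly one. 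The conclusion is nevertheless true, but for a different reason: in any column of a type (iii) block the initial cell set is a bottom-justified segment together with at most one cell from the long top row, and since the bottom segment is immobile, at no point do two cells of that column share an empty position below them; hence every Kohnert move there lands one row down. (The same issue appears, unargued, in your type (iv) case for the columns strictly between $\min(\alpha_j)$ and $p$; the same one-cell-above-the-gap observation repairs it.) This is where the paper puts its weight: it shows that a column containing two cells with a common empty position below can only occur in a type (iv) block, and only in its last column, where all cells are automatically rightmost. You need to replace your type (iii) paragraph with an argument of this kind before the proof is complete.
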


\begin{proof}
Note that, by the definition of Kohnert move, if $D_1,D_2\in KD(\mathbb{D}(\mathbf{a}))$ with $D_2\prec D_1$, then $rk(D_2)<rk(D_1)$ since $rowsum(D_2) < rowsum(D_1)$. Moreover, by the definition of the rank function, $rk(D)=0$ for $D\in KD(\mathbb{D}(\mathbf{a}))$ if and only if $D=\mathbb{D}(\texttt{sort}(\mathbf{a}))$; and $rk(D)>0$ for all other $D\in KD(\mathbb{D}(\mathbf{a}))$.

Now, it remains to show that if $D_1,D_2\in KD(\mathbb{D}(\mathbf{a}))$ satisfy $D_2\precdot D_1$, then $rk(D_2)=rk(D_1)-1$. For a contradiction, assume that $rk(D_2)<rk(D_1)-1$. Thus, there exist some $r>1$ and $1< k\le r-1$ such that $D_2$ is formed from $D_1$ by applying a single Kohnert move at row $r$ which results in the cell in position $(r,c)$ moving to position $(r-k,c)$, i.e., $D_2 = D_1 \ldownarrow^{(r,c)}_{(r-k,c)}$. Note that, considering the definition of Kohnert move, this implies that $(\tilde{r},c)\in D_1$ for all $r-k<\tilde{r}<r$. We claim that there exists a cell $(r',c)\in D_1$ with $r-k<r'<r$ which is rightmost in its row. If true, then the claim would imply that $D_2$ can also be formed from $D_1$ by applying a Kohnert move at row $r'$ followed by another at row $r$, contradicting our assumption that $D_2\precdot D_1$.

To establish the claim, we show that each cell $(\tilde{r},c)\in D_1$ for $r-k<\tilde{r}<r$ is, in fact, rightmost in its row. Assume otherwise. Then there exist $r'$ with $r-k<r'<r$ and $c'>c$ such that $(r,c),(r',c),(r',c')\in D_1$; that is, since $(r-k,c)\notin D_1$, the diagram $D_1$ has two cells in column $c$ with a common empty position below and a column $c'$ to the right of $c$ which contains a cell in the same row as the lower of the two aforementioned cells of column $c$. Now, in Lemma~\ref{lem:purecomp} we showed that $\mathbf{a}$ can be partitioned into subcompositions $\alpha_j$ for some $1\le j\le m$ where each is of one of four types and $\min(\alpha_j)\ge \max(\alpha_{j+1})$ for $1\le j<m$. Note that $\min(\alpha_j)\ge \max(\alpha_{j+1})$ for $1\le j<m$ implies that $\min(\alpha_k)\ge \max(\alpha_{j+1})$ for all $1\le k\le j\le m$. Consequently, at the diagram level, the cells corresponding to a given $\alpha_j$ for $1\le j\le m$ cannot be moved by Kohnert moves into rows corresponding to any other $\alpha_k$ with $1\le k<j$ as all such rows contain at least $\max(\alpha_j)$ left-justified cells, i.e., there are no empty positions in the appropriate columns to move into. Thus, the three cells $(r,c),(r',c),(r',c')\in D_1$ must all originally belong to rows of $\mathbb{D}(\mathbf{a})$ corresponding to a single $\alpha_j$ of one of the four types listed in Lemma~\ref{lem:purecomp}. However, only weak compositions of type (iv) have diagrams for which there is a column containing more than one cell sharing an empty space below, and all columns to the right must be empty. Thus, there cannot exist $c'>c$ satisfying $(r',c')\in D_1$, and the claim follows.
\end{proof}

\section{Checkered diagrams}\label{sec:chd}

In this section, we consider Kohnert posets associated with checkered diagrams, which resemble chess boards.  There are two types of checkered diagrams, $Ch_n^1$ and $Ch_n^2$ for $n\ge 1$, depending on whether the bottom left position contains a cell or not. 

Formally, we define the two families of checkered diagrams as follows.
\begin{align*} 
Ch_n^1&=\left\{(2i-1,2j-1)~\left|~1\le i,j\le \left\lceil\frac{n}{2}\right\rceil\right.\right\}
\cup
\left\{(2i,2j)~\left|~1\le i,j\le \left\lfloor\frac{n}{2}\right\rfloor\right.\right\}, \\
Ch_n^2&=\left\{(2i-1,2j)~\left|~1\le i\le \left\lceil\frac{n}{2}\right\rceil\right.,~ 
1\le j\le \left\lfloor\frac{n}{2}\right\rfloor\right\}
\cup
\left\{(2i,2j-1)~\left|~1\le i\le \left\lfloor\frac{n}{2}\right\rfloor\right.,~ 
1\le j \le \left\lceil \frac{n}{2} \right\rceil\right\}.
\end{align*}
\noindent
In Figure~\ref{fig:checkered}, we illustrate $Ch^1_n$ and $Ch^2_n$ for $n=1,2,3,$ and $4$. 

\begin{figure}[H]
    \centering
    $$\begin{tikzpicture}[scale=0.4]
  \node at (0.5, 0.5) {$\bigtimes$};
  \draw (0,2)--(0,0)--(2,0);
  \draw (0,1)--(1,1)--(1,0);
  \node at (1, -1) {$Ch^1_1$};
\end{tikzpicture}
\quad\quad\quad\quad
\begin{tikzpicture}[scale=0.4]
  \node at (1.5, 1.5) {$\bigtimes$};
  \node at (0.5, 0.5) {$\bigtimes$};
  \draw (0,3)--(0,0)--(3,0);
  \draw (1,0)--(1,1)--(0,1);
  \draw (1,1)--(2,1)--(2,2)--(1,2)--(1,1);
  \node at (1.5, -1) {$Ch^1_2$};
\end{tikzpicture}
\quad\quad\quad\quad
\begin{tikzpicture}[scale=0.4]
  \node at (1.5, 1.5) {$\bigtimes$};
  \node at (0.5, 0.5) {$\bigtimes$};
  \node at (0.5, 2.5) {$\bigtimes$};
  \node at (2.5, 0.5) {$\bigtimes$};
  \node at (2.5, 2.5) {$\bigtimes$};
  \draw (0,4)--(0,0)--(4,0);
  \draw (1,0)--(1,1)--(0,1);
  \draw (0,2)--(1,2)--(1,3)--(0,3);
  \draw (1,1)--(2,1)--(2,2)--(1,2)--(1,1);
  \draw (3,0)--(3,1)--(2,1)--(2,0);
  \draw (2,2)--(3,2)--(3,3)--(2,3)--(2,2);
  \node at (2, -1) {$Ch^1_3$};
\end{tikzpicture}
\quad\quad\quad\quad
\begin{tikzpicture}[scale=0.4]
  \node at (1.5, 1.5) {$\bigtimes$};
  \node at (1.5, 3.5) {$\bigtimes$};
  \node at (0.5, 0.5) {$\bigtimes$};
  \node at (0.5, 2.5) {$\bigtimes$};
  \node at (3.5, 1.5) {$\bigtimes$};
  \node at (3.5, 3.5) {$\bigtimes$};
  \node at (2.5, 0.5) {$\bigtimes$};
  \node at (2.5, 2.5) {$\bigtimes$};
  \draw (0,5)--(0,0)--(5,0);
  \draw (1,0)--(1,1)--(0,1);
  \draw (0,2)--(1,2)--(1,3)--(0,3);
  \draw (1,1)--(2,1)--(2,2)--(1,2)--(1,1);
  \draw (1,3)--(2,3)--(2,4)--(1,4)--(1,3);
  \draw (3,0)--(3,1)--(2,1)--(2,0);
  \draw (2,2)--(3,2)--(3,3)--(2,3)--(2,2);
  \draw (3,1)--(4,1)--(4,2)--(3,2)--(3,1);
  \draw (3,3)--(4,3)--(4,4)--(3,4)--(3,3);
  \node at (2.5, -1) {$Ch^1_4$};
\end{tikzpicture}$$
$$\begin{tikzpicture}[scale=0.4]
  \draw (0,2)--(0,0)--(2,0);
  \node at (1, -1) {$Ch^2_1$};
\end{tikzpicture}
\quad\quad\quad\quad
\begin{tikzpicture}[scale=0.4]
  \node at (0.5, 1.5) {$\bigtimes$};
  \node at (1.5, 0.5) {$\bigtimes$};
  \draw (0,3)--(0,0)--(3,0);
  \draw (0,1)--(1,1)--(1,2)--(0,2);
  \draw (1,0)--(1,1)--(2,1)--(2,0);
  \node at (1.5, -1) {$Ch^2_2$};
\end{tikzpicture}
\quad\quad\quad\quad
\begin{tikzpicture}[scale=0.4]
  \node at (0.5, 1.5) {$\bigtimes$};
  \node at (1.5, 0.5) {$\bigtimes$};
  \node at (1.5, 2.5) {$\bigtimes$};
  \node at (2.5, 1.5) {$\bigtimes$};
  \draw (0,4)--(0,0)--(4,0);
  \draw (0,1)--(1,1)--(1,2)--(0,2);
  \draw (1,0)--(1,1)--(2,1)--(2,0);
  \draw (1,2)--(2,2)--(2,3)--(1,3)--(1,2);
  \draw (2,1)--(3,1)--(3,2)--(2,2)--(2,1);
  \node at (2, -1) {$Ch^2_3$};
\end{tikzpicture}
\quad\quad\quad\quad
\begin{tikzpicture}[scale=0.4]
  \node at (0.5, 1.5) {$\bigtimes$};
  \node at (0.5, 3.5) {$\bigtimes$};
  \node at (1.5, 0.5) {$\bigtimes$};
  \node at (1.5, 2.5) {$\bigtimes$};
  \node at (2.5, 1.5) {$\bigtimes$};
  \node at (2.5, 3.5) {$\bigtimes$};
  \node at (3.5, 0.5) {$\bigtimes$};
  \node at (3.5, 2.5) {$\bigtimes$};
  \draw (0,5)--(0,0)--(5,0);
  \draw (0,1)--(1,1)--(1,2)--(0,2);
  \draw (0,3)--(1,3)--(1,4)--(0,4);
  \draw (1,0)--(1,1)--(2,1)--(2,0);
  \draw (1,2)--(2,2)--(2,3)--(1,3)--(1,2);
  \draw (2,1)--(3,1)--(3,2)--(2,2)--(2,1);
  \draw (2,3)--(3,3)--(3,4)--(2,4)--(2,3);
  \draw (3,0)--(3,1)--(4,1)--(4,0);
  \draw (3,2)--(4,2)--(4,3)--(3,3)--(3,2);
  \node at (2.5, -1) {$Ch^2_4$};
\end{tikzpicture}$$
    \caption{$Ch^1_n$ and $Ch^2_n$ for $n=1,2,3,$ and $4$}
    \label{fig:checkered}
\end{figure}
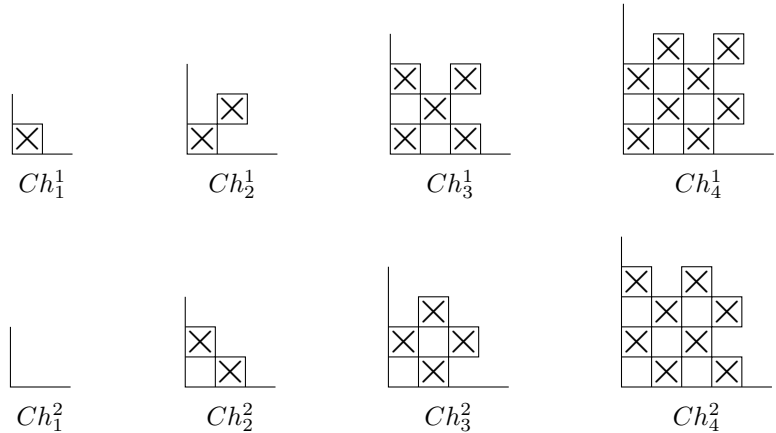

\noindent
As we will see, the results of this section concerning the poset properties of $Ch_n^1$ and $Ch_n^2$ depend only on the parity of $n$ and not the particular family of checkered diagrams. Consequently, to facilitate discussion, throughout the remainder of this section, for $n\ge 1$, we let $D_0(n)$ denote the initial diagram $Ch^1_n$ or $Ch^2_n$.

Unlike in previous sections, for Kohnert posets of checkered diagrams, we begin by considering the property of being ranked, as the characterization is more straightforward compared to the study of minimal elements.

\begin{theorem}\label{thm:checkrank}
The poset $\mathcal{P}(D_0(n))$ is ranked if and only if $n\le 3$. Moreover, $\mathcal{P}(D_0(n))$ is ranked and bounded if and only if $n\le 2$.
\end{theorem}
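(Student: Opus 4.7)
The plan is to split into three regimes: $n \le 2$, $n = 3$, and $n \ge 4$. For $n \le 2$ each $D_0(n)$ has at most two cells with weakly decreasing column counts, so Proposition~\ref{prop:nummingen} gives boundedness, and direct enumeration of $KD(D_0(n))$ shows that $\mathcal{P}(D_0(n))$ is a chain of length at most two, hence ranked.

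For $n = 3$ I plan to show that $\mathcal{P}(D_0(3))$ is ranked but not bounded. For rankedness I will verify that $rk(D) = rowsum(D) - b(D_0(3))$ is a rank function: every column of $D_0(3)$ contains at most two cells, and inspecting the reachable column configurations in both $Ch_3^1$ and $Ch_3^2$ shows that no Kohnert move drops a cell by more than one row (either a column has a single cell, or its two cells are configured so that a one-row drop is the only available move). Hence every cover relation decreases $rk$ by exactly one, so $rk$ is a rank function. For non-boundedness I exhibit two distinct minimal elements; for instance in $\mathcal{P}(Ch_3^1)$ the diagrams $\{(1,1),(1,3),(2,1),(2,2),(2,3)\}$ and $\{(1,1),(1,2),(1,3),(2,1),(2,3)\}$ are both fixed by every Kohnert move and both reachable from $Ch_3^1$ via suitable Kohnert sequences, and an analogous pair works for $\mathcal{P}(Ch_3^2)$.

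For $n \ge 4$ I will show $\mathcal{P}(D_0(n))$ is not ranked by producing $D \in KD(D_0(n))$ satisfying the hypotheses of Corollary~\ref{cor:ranked}(b). Starting from $D_0(n)$, first apply Kohnert moves at row $4$ from the rightmost cell leftward, dropping each row-$4$ cell $(4,c)$ into $(3,c)$; this is legal since $(3,c)$ has opposite parity from $(4,c)$ in the checkered pattern and so is empty in $D_0(n)$. Then apply Kohnert moves at row $2$ from the rightmost cell leftward, dropping every row-$2$ cell except $(2,2)$ into row $1$, again legal by parity. In the resulting $D$, row $2$ is $\{(2,2)\}$, row $3$ is a contiguous strip from column $1$ up to some maximal column $c_2$, and $(1,c_2) \in D$. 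I then check that Corollary~\ref{cor:ranked}(b) applies with $r^* = 1$, $c_1 = 2$, and $c_2$ as above; the argument for $Ch_n^2$ is parallel with obvious parity adjustments. Combining the three regimes, the characterizations ``ranked iff $n \le 3$'' and ``ranked and bounded iff $n \le 2$'' both follow.

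The main obstacle will be the verification in the $n \ge 4$ case that all four conditions of Corollary~\ref{cor:ranked}(b) hold simultaneously in the constructed $D$, which amounts to tracking which cells occupy rows $1$, $2$, and $3$ after the prescribed Kohnert moves for both parities of $n$ and for both families $Ch_n^1$ and $Ch_n^2$. Fortunately, cells in rows $\ge 4$ do not enter the hypotheses of Corollary~\ref{cor:ranked}(b), so they need not be cleared.
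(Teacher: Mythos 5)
Your proposal is correct, and its overall strategy matches the paper's: handle $n\le 3$ by direct verification and, for $n\ge 4$, exhibit a diagram in $KD(D_0(n))$ satisfying Corollary~\ref{cor:ranked}(b). The differences are in the details. For $n\ge 4$ the paper's witness is far more economical: it applies a single Kohnert move at row $4$ (when $D_0(n)=Ch^1_n$ with $n$ odd or $Ch^2_n$ with $n$ even), or one move at row $2$ followed by two at row $4$ (in the complementary cases), and invokes Corollary~\ref{cor:ranked}(b) with $r^*=1$ and $c_1\in\{n-1,n-2\}$, $c_2=n$, locating the obstruction at the right edge of the diagram. You instead clear all of row $4$ into row $3$ and all but the leftmost cell of row $2$ into row $1$, producing a contiguous strip in row $3$, and take $c_1\in\{1,2\}$ at the left edge with $c_2=n$; this costs $O(n)$ moves and a four-way parity check, but the verification does go through in every case ($(1,n)$ is either an original cell or is filled by the row-$2$ drops, $(1,c_1)$ stays empty because $(2,c_1)$ is the one cell you retain, and row $2$ being reduced to $\{(2,c_1)\}$ supplies both condition (iii) and, implicitly, condition (v) of Theorem~\ref{thm:ranked2}). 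For the small cases, the paper simply displays the Hasse diagrams of $\mathcal{P}(D_0(n))$ for $n\le 3$ and reads off rankedness and boundedness, whereas you argue that $rowsum(D)-b(D_0(3))$ is a rank function because no Kohnert move in $KD(D_0(3))$ ever drops a cell more than one row (true, since each column of $D_0(3)$ either has a single cell in row at most $2$ or two cells in rows $1$ and $3$) and you exhibit two minimal elements explicitly; both of your claimed minimal elements of $\mathcal{P}(Ch^1_3)$ are indeed fixed by all Kohnert moves and reachable, so the non-boundedness needed for the second assertion is established.
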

\begin{proof}
For the backward direction, it can be checked directly that for $n=1,2,$ and $3$ the poset $\mathcal{P}(D_0(n))$ is ranked. The poset $Ch_1^2$ is empty, while the Hasse diagrams of the remaining posets are illustrated in Figure~\ref{fig:checkHasse}.

\begin{figure}[H]
    \centering
    $$\begin{tikzpicture}
	\node (1) at (0, 0) [circle, draw = black, fill = black, inner sep = 0.5mm]{};
	\node (2) at (0, -1) {$\mathcal{P}(Ch_1^1)$};
\end{tikzpicture}\quad\quad \begin{tikzpicture}
    \node (2) at (0, 0.75) [circle, draw = black, fill = black, inner sep = 0.5mm]{};
	\node (1) at (0, 0) [circle, draw = black, fill = black, inner sep = 0.5mm]{};
    \draw (1)--(2);
	\node (3) at (0, -1) {$\mathcal{P}(Ch_2^1)=\mathcal{P}(Ch_2^2)$};
\end{tikzpicture}\quad\quad \begin{tikzpicture}
	\node (1) at (-0.5, 0) [circle, draw = black, fill = black, inner sep = 0.5mm]{};
    \node (2) at (-0.5, 0.75) [circle, draw = black, fill = black, inner sep = 0.5mm]{};
    \node (3) at (-0.5, 1.5) [circle, draw = black, fill = black, inner sep = 0.5mm]{};
    \node (4) at (0.5, 0.75) [circle, draw = black, fill = black, inner sep = 0.5mm]{};
    \node (5) at (0.5, 1.5) [circle, draw = black, fill = black, inner sep = 0.5mm]{};
    \node (6) at (0, 2) [circle, draw = black, fill = black, inner sep = 0.5mm]{};
    \draw (1)--(2);
	\node (7) at (0, -1) {$\mathcal{P}(Ch_3^1)$};
    \draw (1)--(2)--(3)--(6)--(5)--(4);
\end{tikzpicture}\quad\quad\quad \begin{tikzpicture}
	\node (1) at (0.5, 0) [circle, draw = black, fill = black, inner sep = 0.5mm]{};
    \node (2) at (0.5, 0.75) [circle, draw = black, fill = black, inner sep = 0.5mm]{};
    \node (3) at (0.5, 1.5) [circle, draw = black, fill = black, inner sep = 0.5mm]{};
    \node (4) at (-0.5, 0.75) [circle, draw = black, fill = black, inner sep = 0.5mm]{};
    \node (5) at (-0.5, 1.5) [circle, draw = black, fill = black, inner sep = 0.5mm]{};
    \node (6) at (0, 2) [circle, draw = black, fill = black, inner sep = 0.5mm]{};
    \draw (1)--(2);
	\node (7) at (0, -1) {$\mathcal{P}(Ch_3^2)$};
    \draw (1)--(2)--(3)--(6)--(5)--(4)--(3);
\end{tikzpicture}$$
    \caption{Hasse diagrams of $\mathcal{P}(D_0(n))$ for $n=1,2,$ and $3$}
    \label{fig:checkHasse}
\end{figure}
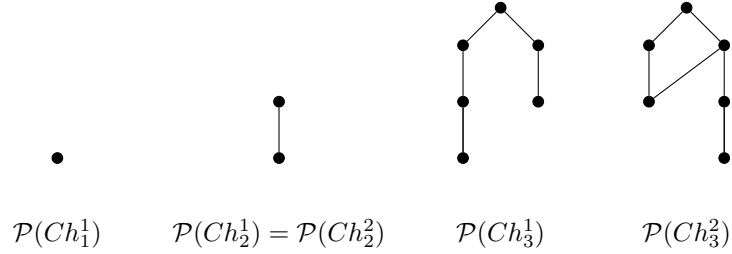

Now, for the forward direction, assuming $n\ge 3$, we show that we are under the assumptions of Corollary~\ref{cor:ranked}~(b). If $D_0(n)=Ch^1_n$ with $n$ odd or $D_0(n)=Ch^2_n$ with $n$ even, then consider the diagram $D$ obtained from $D_0(n)$ by applying a Kohnert move at row 4.  Note that $(2,n-1),(3,n-1),(1,n),(3,n)\in D$ and $(1,n-1),(2,n),(r,c)\notin D$ for $r\ge 1$ and $c>n$. Thus, we apply Corollary~\ref{cor:ranked}~(b) with $r^*=1$, $c_1=n-1$, and $c_2=n$ to conclude that $\mathcal{P}(D_0(n))$ is not ranked. On the other hand, if $D_0(n)=Ch^1_n$ with $n$ even or $D_0(n)=Ch^2_n$ with $n$ odd, then consider the diagram $D'$ obtained from $D_0(n)$ by applying a  Kohnert move at row 2 followed by two Kohnert moves at row 4. Note that $(2,n-2),(3,n-2),(1,n),(3,n)\in D'$ and $(1,n-2),(2,n-1),(2,n),(r,c)\notin D'$ for $r\ge 1$ and $c>n$. Thus, we apply Corollary~\ref{cor:ranked}~(b) with $r^*=1$, $c_1=n-2$, and $c_2=n$ to conclude that $\mathcal{P}(D_0(n))$ is not ranked.

Thus, $\mathcal{P}(D_0(n))$ is ranked if and only if $n\le 3$. Moreover, considering Figure~\ref{fig:checkHasse}, we see that $\mathcal{P}(D_0(n))$ is ranked and bounded if and only if $n\le 2$.
\end{proof}

Moving to boundedness, the number of minimal elements of $\mathcal{P}(D_0(n))$ depends on the parity of $n$, and, in both cases, the formula takes a simple form. 
\begin{theorem}\label{thm:noddcheckered}
    \begin{enumerate}
        \item[\textup{(a)}] If $n$ is even, then $|Min(D_0(n))|=1$.
        \item[\textup{(b)}] If $n$ is odd, then $|Min(D_0(n))|=\binom{2m}{m}$, where $m=\lfloor\frac{n}{2}\rfloor$. 
    \end{enumerate}
\end{theorem}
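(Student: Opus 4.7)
The plan is to split the proof according to the parity of $n$. For (a), when $n=2m$ is even, a direct inspection of the definitions of $Ch^1_{2m}$ and $Ch^2_{2m}$ shows that every column contains exactly $m$ cells, so the sequence of column cell counts is constant (in particular, weakly decreasing from left to right). Proposition~\ref{prop:nummingen} then immediately yields $|Min(D_0(n))|=1$.

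For (b), when $n=2m+1$, the column cell counts alternate between $m+1$ (``tall'' columns) and $m$ (``short'' columns). We first describe every $D\in Min(D_0(n))$ explicitly. If some cell of $D$ lay in a row $r>m+1$, then the rightmost cell of $D$ in row $r$ would, by minimality, need its column filled in all of rows $1,\ldots,r$, which is impossible since each column has at most $m+1$ cells. Hence every cell of $D$ lies in rows $1,\ldots,m+1$, and since column counts are preserved under Kohnert moves, each tall column occupies rows $1,\ldots,m+1$ fully while each short column occupies $m$ of those rows with a unique ``missing row'' $a_c\in\{1,\ldots,m+1\}$. In the $Ch^2_{2m+1}$ case, the rightmost column is itself short, and a further application of the rightmost-cell criterion pins its missing row to $m+1$.

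Now let $c_1<c_2<\cdots<c_k$ be the short columns of $D_0(n)$ listed left-to-right. The core claim is that the map $D\mapsto(a_{c_1},\ldots,a_{c_k})$ is a bijection between $Min(D_0(n))$ and the set of weakly increasing tuples in $\{1,\ldots,m+1\}^k$, subject to the boundary condition $a_{c_k}=m+1$ in the $Ch^2_{2m+1}$ case. For the surjective direction we will exhibit, for each such weakly increasing tuple, an explicit sequence of Kohnert moves realizing it: a sweep first collapses the tall columns into rows $1,\ldots,m+1$, after which the short columns are processed in an order dictated by the weakly increasing hypothesis so that at each stage the needed cell is rightmost in its row. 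For the injective direction we will argue by contradiction that a tuple with $a_{c_i}>a_{c_{i+1}}$ for some $i$ cannot be realized: any sequence of Kohnert moves producing the prescribed row pattern in column $c_i$ is forced to first drop a cell out of column $c_{i+1}$ that the target demands to remain, and since Kohnert moves are irreversible (cells only move downward) the column $c_{i+1}$ target is permanently spoiled.

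Finally, the number of weakly increasing sequences of length $m$ with entries in $\{1,\ldots,m+1\}$ equals $\binom{m+(m+1)-1}{m}=\binom{2m}{m}$ by stars-and-bars. Since there are $m$ free entries in both families (the last entry being forced in the $Ch^2_{2m+1}$ case), this gives $|Min(D_0(n))|=\binom{2m}{m}$ as claimed. The hard part will be the injective half of the reachability argument: the coarse monotone invariant counting cells in upper-right rectangles is not sharp enough to rule out all non-weakly-increasing tuples, so the obstruction must be proved by a careful, move-by-move analysis that identifies, for each pair of adjacent short columns with $a_{c_i}>a_{c_{i+1}}$, a specific cell in column $c_{i+1}$ that becomes irrecoverably ``stuck'' once the Kohnert moves forced by the column $c_i$ target are executed.
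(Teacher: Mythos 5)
Your overall strategy is the same as the paper's: part (a) via Proposition~\ref{prop:nummingen}, and part (b) by showing that a minimal element is determined by the ``missing row'' of each $m$-cell (short) column and that these missing rows, read left to right, range exactly over the weakly increasing sequences in $\{1,\dots,m+1\}$ (the paper packages this as a bijection with $KD(\mathbb{D}_m)$ and counts via partitions in an $m\times m$ box rather than stars-and-bars, but that is cosmetic). Your structural observations --- all cells of a minimal element lie weakly below row $m+1$, tall columns are forced to be full, the last column of $Ch^2_{2m+1}$ is forced to be bottom-justified --- are all correct and match the paper's observations (i)--(iii).

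The genuine gap is the direction you yourself flag as ``the hard part'': proving that a reachable minimal diagram cannot have $a_{c_i}>a_{c_{i+1}}$. You do not supply this argument; you only assert that some cell of column $c_{i+1}$ ``becomes irrecoverably stuck,'' and irreversibility of Kohnert moves alone does not rule out reaching the bad configuration by some other ordering of moves. The paper closes this gap with Lemma~\ref{lem:Ch}, a global invariant preserved by every single Kohnert move: for each cell $(r_{i,j}(T),i)$, column $i+2$ contains at least $j$ cells weakly below row $r_{i,j}(T)$. Since the short columns are spaced two apart, this invariant immediately forces the missing rows to be weakly increasing, with no move-by-move tracking of individual cells. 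Its proof is a careful induction (if a move in column $i$ were to break the invariant, the cells of column $i+2$ sitting in the intervening rows would block that very move). A secondary, fixable issue is your surjectivity sweep: you cannot literally ``first collapse the tall columns'' and then handle the short ones, because a Kohnert move at a row acts on the \emph{rightmost} cell of that row, which after a few moves may lie in a short column; the paper's construction interleaves the two families row by row (Steps $1,2,\dots,2j-1,2j$ in its proof) precisely to control which cell is rightmost at each stage. Until the weak-increase obstruction is actually proved, the count $\binom{2m}{m}$ is only an upper bound on what your argument establishes.
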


When $n$ is even, the above result follows as a direct consequence of Proposition~\ref{prop:nummingen} since each nonempty column has exactly $\dfrac{n}{2}$ cells in both cases. 

On the other hand, when $n$ is odd, despite being an elegant result, the proof requires a more detailed study. The rest of the section is dedicated to proving this result. The idea behind the proof is to give a bijection between the minimal elements of $\mathcal{P}(D_0(n))$ and Kohnert diagrams associated with the key diagram defined by the composition 
$$(0,m^m)=(0,\underbrace{m,\hdots,m}_m) \qquad \text{ for } m=\left\lfloor\frac{n}{2}\right\rfloor.$$ 
Ongoing, we set $\mathbb{D}_m=\mathbb{D}((0,m^m))$, which we illustrate for $m=1,2,$ and $3$ in Figure~\ref{fig:squares}.

\begin{figure}[H]
    \centering
    $$\begin{tikzpicture}[scale=0.4]
  \node at (0.5, 1.5) {$\bigtimes$};
  \draw (0,3)--(0,0)--(2,0);
  \draw (0,2)--(1,2)--(1,1)--(0,1);
  \node at (1, -1) {$\mathbb{D}_1$};
\end{tikzpicture}
\quad\quad\quad\quad
\begin{tikzpicture}[scale=0.4]
  \node at (1.5, 1.5) {$\bigtimes$};
  \node at (0.5, 1.5) {$\bigtimes$};
  \node at (1.5, 2.5) {$\bigtimes$};
  \node at (0.5, 2.5) {$\bigtimes$};
  \draw (0,4)--(0,0)--(3,0);
  \draw (0,1)--(1,1);
  \draw (0,2)--(1,2);
  \draw (1,1)--(2,1)--(2,2)--(1,2)--(1,1);
  \draw (0,3)--(2,3)--(2,2);
  \draw (1,3)--(1,2);
  \node at (1.5, -1) {$\mathbb{D}_2$};
\end{tikzpicture}
\quad\quad\quad\quad
\begin{tikzpicture}[scale=0.4]
  \node at (2.5, 1.5) {$\bigtimes$};
  \node at (1.5, 1.5) {$\bigtimes$};
  \node at (0.5, 1.5) {$\bigtimes$};
  \node at (2.5, 2.5) {$\bigtimes$};
  \node at (1.5, 2.5) {$\bigtimes$};
  \node at (0.5, 2.5) {$\bigtimes$};
  \node at (2.5, 3.5) {$\bigtimes$};
  \node at (1.5, 3.5) {$\bigtimes$};
  \node at (0.5, 3.5) {$\bigtimes$};
  \draw (0,5)--(0,0)--(4,0);
  \draw (0,1)--(1,1);
  \draw (0,2)--(1,2);
  \draw (1,1)--(2,1)--(2,2)--(1,2)--(1,1);
  \draw (0,3)--(2,3)--(2,2);
  \draw (1,3)--(1,2);
  \draw (2,1)--(3,1)--(3,4)--(0,4);
  \draw (1,3)--(1,4);
  \draw (2,3)--(2,4);
  \draw (3,3)--(3,4);
  \draw (2,2)--(3,2);
  \draw (2,3)--(3,3);
  \node at (1.5, -1) {$\mathbb{D}_3$};
\end{tikzpicture}$$
    \caption{$\mathbb{D}_m$ for $m=1,2,3$}
    \label{fig:squares}
\end{figure}
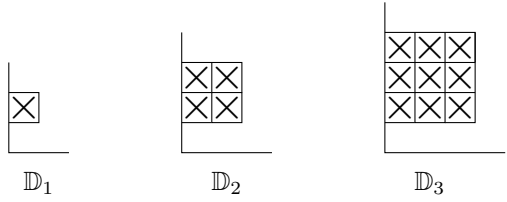

To start, we determine the number of Kohnert diagrams associated with $\mathbb{D}_m$, i.e., we compute $|KD(\mathbb{D}_m)|$. Let's introduce some notation. Consider a diagram $T$ consisting of $m^2$ cells contained in rows $1,2,\ldots, m+1$ and columns $1,2,\ldots,m$ with $m$ cells in each nonempty column for some $m\ge 1$. Note that each nonempty column of $T$ has exactly one empty position contained in rows 1 through $m+1$, and, consequently, we can encode the diagram $T$ as follows. We define the \textbf{empty-row sequence} of $T$ as the sequence 
$\texttt{er}(T):=(r_1,\dots, r_m)$ where $1\le r_i\le m+1$, $1\le i\le m$, and $(r_i,i)\notin T$. Then $T$ is completely determined by $\texttt{er}(T)$. In the particular case of $T \in KD(\mathbb{D}_m)$ for $m\ge 1$, we have the following characterization.

\begin{lemma}\label{lem:sqform}
For $m\ge 1$, let $T$ be a diagram consisting of $m^2$ cells contained in rows $1,2,\ldots, m+1$ and columns $1,2,\ldots,m$ with $m$ cells in each nonempty column. Then $T\in KD(\mathbb{D}_m)$ if and only if the sequence $\texttt{er}(T)$ is weakly increasing.
\end{lemma}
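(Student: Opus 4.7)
I would prove both implications by induction, tracking how each Kohnert move acts on the empty-row sequence. A useful preliminary observation is that Kohnert moves preserve the number of cells per column and only push cells downward; hence any diagram obtained from $\mathbb{D}_m$ by Kohnert moves retains $m$ cells in each of columns $1,\dots,m$, all confined to rows $1,\dots,m+1$. In particular, $\texttt{er}(\cdot)$ is well defined at every intermediate stage.

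\textbf{Forward direction.} I would induct on the number of Kohnert moves used to reach $T$ from $\mathbb{D}_m$. The base case $T=\mathbb{D}_m$ has $\texttt{er}(T)=(1,\dots,1)$. For the inductive step, suppose $\texttt{er}(T')=(r_1,\dots,r_m)$ is weakly increasing and some nontrivial Kohnert move at row $r$ yields $T$. Let $c^\star := \max\{c : r_c \ne r\}$, the column of the rightmost cell of row $r$ in $T'$. Since $r_c = r$ for all $c>c^\star$, weak monotonicity gives $r_{c^\star}\le r$, and equality is excluded by the definition of $c^\star$, so $r_{c^\star}<r$. The move changes only $r_{c^\star}$, raising it to $r$. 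After the move the $c^\star$-th entry equals $r$, all entries to its right are already $r$, and all entries to its left are bounded above by the original value $r_{c^\star}<r$; hence $\texttt{er}(T)$ is still weakly increasing.

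\textbf{Backward direction.} For the converse, I would induct on $\Sigma := \sum_{c=1}^{m} r_c$. The base case $\Sigma=m$ forces every $r_c=1$, whence $T=\mathbb{D}_m$. If $\Sigma>m$, let $R:=r_m$ and let $c^\star$ be the smallest index with $r_{c^\star}=R$; then $R>1$ and $r_c<R$ for $c<c^\star$. Define $T'$ to be the diagram whose empty-row sequence agrees with $\texttt{er}(T)$ except that the entry in position $c^\star$ is replaced by $R-1$. The inequality $r_{c^\star-1}<R$ forces $r_{c^\star-1}\le R-1$, so $\texttt{er}(T')$ is weakly increasing, and $\sum_c r'_c=\Sigma-1$. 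It remains to verify that a single Kohnert move at row $R$ converts $T'$ into $T$: in $T'$ the cell $(R,c^\star)$ is the rightmost in its row, since $r'_c=R$ for all $c>c^\star$; and the unique empty position of column $c^\star$ in $T'$ sits at row $R-1$. The move therefore drops $(R,c^\star)$ to $(R-1,c^\star)$, producing exactly $T$. The inductive hypothesis gives $T'\in KD(\mathbb{D}_m)$, hence $T\in KD(\mathbb{D}_m)$.

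\textbf{Main obstacle.} The main subtlety lies in the backward step: the diagram $T'$ must be chosen so that the undone Kohnert move acts on the rightmost cell of its row, drops exactly one row, and leaves a weakly increasing empty-row sequence behind. Selecting $c^\star$ to be the leftmost column achieving the maximal value $R$ enforces all three requirements simultaneously, which is what drives the induction.
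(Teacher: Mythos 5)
Your proof is correct. The forward direction coincides with the paper's argument: both of you show that a nontrivial Kohnert move replaces the entry $r_{c^\star}<r$ of the empty-row sequence by $r$ while every entry to its right already equals $r$, so weak monotonicity survives. (One small point: when $c^\star=m$ your appeal to weak monotonicity to get $r_{c^\star}\le r$ is vacuous, since there are no entries to the right; but $r_{c^\star}<r$ follows immediately from the nontriviality of the move, because the unique empty position of column $c^\star$ must lie strictly below row $r$ for the cell to drop.) The backward direction is where you genuinely diverge: the paper exhibits an explicit forward procedure building $T$ from $\mathbb{D}_m$, applying for each ascent $r_j>r_{j-1}$ of the target sequence a batch of $m-j+1$ Kohnert moves at row $r_j$, and leaves the verification that this produces the prescribed empty-row sequence to the reader. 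You instead run a downward induction on $\sum_c r_c$, un-doing one Kohnert move at a time by locating the leftmost column attaining the maximal entry $R=r_m$ and lowering that entry to $R-1$; the checks that the intermediate sequence stays weakly increasing and that the single move at row $R$ acts on the rightmost cell of that row are all present and correct. Both routes are valid; yours has the advantage that each inductive step is a single, fully verified move, whereas the paper's explicit construction is shorter to state but leaves a multi-step verification implicit.
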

\begin{proof}
For the forward direction, note that for the maximal element $\mathbb{D}_m$ of $\mathcal{P}(\mathbb{D}_m)$, the empty-row sequence $\texttt{er}(\mathbb{D}_m)=(1,\dots,1)$ is weakly increasing. Thus, it suffices to show that for diagrams of $KD(\mathbb{D}_m)$ the property of having a weakly increasing empty-row sequence is preserved by Kohnert moves. Take $T_1\in KD(\mathbb{D}_m)$ such that the sequence $\texttt{er}(T_1)$ is weakly increasing and assume that $T_2\in KD(\mathbb{D}_m)$ can be obtained from $T_1$ by applying a single Kohnert move at row $r$, say $T_2 = T_1\ldownarrow^{(r,c)}_{(r',c)}$. Note that, by the definition of Kohnert move, $r'<r$ and $(r,\tilde{c})\notin T_1$ for $c<\tilde{c}\le m$. Consequently, the empty-row sequence of $T_1$ is of the form $$\texttt{er}(T_1) = (r_1, \hdots , r_{c-1} , r' , r , \hdots , r),$$ where $r_1 \le \hdots \le r_{c-1} \le r' < r$, and the empty-row sequence of $T_2$ is of the form $$\texttt{er}(T_2) = (r_1, \hdots, r_{c-1}, r, r, \hdots, r),$$ where $r_1 \le \dots \le r_{c-1} < r$; that is $\texttt{er}(T_2)$ is weakly increasing and the forward direction follows. 

Now, for the backward direction, suppose we have a weakly increasing sequence $(r_1,\dots,r_m)$, where $1\le r_i\le m+1$ for $1\le i\le m$. Starting from $\mathbb{D}_m$, we can construct $T\in KD(\mathbb{D}_m)$ such that $\texttt{er}(T) = (r_1,\dots,r_m)$ using the following procedure:
\begin{enumerate}
    \item[] Step 1: Apply $m$ Kohnert moves at row $r_1$. 
    \item[] Step $j$: If $r_j = r_{j-1}$, then continue; otherwise, apply $n-j+1$ Kohnert moves at row $r_j$. 
\end{enumerate}
The result follows.
\end{proof}

Using Lemma~\ref{lem:sqform}, we are able to compute $|KD(\mathbb{D}_m)|$.

\begin{corollary}\label{cor:numberdiagDm}
For $m\ge 1$, $|KD(\mathbb{D}_m)|=\binom{2m}{m}$.
\end{corollary}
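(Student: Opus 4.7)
The plan is to combine Lemma~\ref{lem:sqform} with a standard stars-and-bars count. The first step is to observe that every $T \in KD(\mathbb{D}_m)$ satisfies the hypotheses of Lemma~\ref{lem:sqform}: Kohnert moves preserve both the total number of cells ($m^2$) and the number of cells in each column ($m$); they never change the column of a cell (so all cells stay in columns $1$ through $m$); and since cells only move downward, the highest occupied row in any descendant of $\mathbb{D}_m$ is at most $m+1$. Therefore Lemma~\ref{lem:sqform} applies to every $T \in KD(\mathbb{D}_m)$.

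Next, I would note that the empty-row map $T \mapsto \texttt{er}(T)$ is injective: a diagram satisfying the hypotheses of Lemma~\ref{lem:sqform} is uniquely determined by specifying, for each column $1 \le i \le m$, which row in $\{1, \ldots, m+1\}$ is empty. The lemma then identifies the image of $KD(\mathbb{D}_m)$ under this map as exactly the set of weakly increasing sequences $(r_1, \ldots, r_m)$ with $1 \le r_i \le m+1$.

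The final step is then a routine count: the number of weakly increasing sequences of length $m$ with entries in a set of size $m+1$ is
$$\binom{(m+1) + m - 1}{m} = \binom{2m}{m},$$
and the corollary follows. There is no real obstacle here; the substantive content is already contained in Lemma~\ref{lem:sqform}, and the corollary is simply its enumerative consequence.
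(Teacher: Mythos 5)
Your proposal is correct and follows essentially the same route as the paper: both use Lemma~\ref{lem:sqform} to identify $KD(\mathbb{D}_m)$ bijectively (via $\texttt{er}$) with weakly increasing sequences of length $m$ with entries in $\{1,\ldots,m+1\}$, and then count those sequences. The only cosmetic difference is that you count them by stars-and-bars while the paper shifts each entry down by one and cites the count of partitions in an $m\times m$ rectangle; these are the same enumeration.
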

\begin{proof}
By Lemma~\ref{lem:sqform}, $\texttt{er}(T)$ defines a bijection between diagrams $T\in KD(\mathbb{D}_m)$ and weakly increasing sequences $(r_1,\dots,r_m)$ where $1\le r_i\le m+1$ for $1\le i\le m$. Decreasing each entry in such a sequence by one results in a weakly increasing sequence $(r_1,\dots,r_m)$ where $0\le r_i\le m$ for $1\le i\le m$, i.e., an integer partition contained inside of an $m\times m$ rectangle. As the number of such integer partitions is well-known to be $\binom{2m}{m}$ (see \cite[Proposition 1.7.3]{EC1}), the result follows.
\end{proof}

Returning to diagrams $D_0(n)$ for $n$ odd, the following result allows us to relate diagrams of $Min(D_0(n))$ with diagrams of $KD(\mathbb{D}_m)$ for $m=\lfloor\frac{n}{2}\rfloor$. In particular, combining Lemmas~\ref{lem:sqform} and~\ref{lem:Ch}, it follows that removing certain columns of $T\in Min(D_0(n))$ results in a unique $T'\in KD(\mathbb{D}_m)$ for $m=\lfloor\frac{n}{2}\rfloor$. Consequently, considering Corollary~\ref{cor:numberdiagDm}, it is enough to show that the aforementioned map is a bijection to establish Theorem~\ref{thm:noddcheckered} when $n$ is odd. Before stating the technical result, we introduce some notation. Given a diagram $T$, let $r_{i,j}(T)$ denote the row occupied by the $j^{th}$ cell from bottom to top in column $i$ of $T$. 

\begin{lemma}\label{lem:Ch}
Consider a diagram $T\in KD(D_0(n))$ with $n$ odd, and a cell $(r_{i,j}(T),i)\in T$ for some $1\le i \le n-2$ and $j\ge 1$. Then the number of cells weakly below row $r_{i,j}(T)$ in column $i+2$ of $T$ is at least $j$.
\end{lemma}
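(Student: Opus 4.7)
The plan is to reformulate the claim into an equivalent but easier-to-track invariant and then prove that invariant is preserved under Kohnert moves. Specifically, I would introduce the counting function
$$L_c(T,R) := \left|\{\, r \le R : (r,c) \in T \,\}\right|,$$
and show that for all $T \in KD(D_0(n))$, all $1 \le i \le n-2$, and all $R \ge 0$,
$$L_{i+2}(T,R) \ge L_i(T,R).$$
Setting $R = r_{i,j}(T)$ recovers the lemma, since by definition $L_i(T, r_{i,j}(T)) \ge j$.

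I would prove the invariant by induction on the length of a sequence of Kohnert moves producing $T$ from $D_0(n)$. The base case $T = D_0(n)$ is immediate: since $n$ is odd, columns $i$ and $i+2$ share the same parity, and by the definitions of $Ch_n^1$ and $Ch_n^2$ they contain cells in exactly the same rows, giving equality $L_{i+2}(D_0(n), R) = L_i(D_0(n), R)$ for every $R$.

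For the inductive step, suppose $T$ satisfies the invariant and $T' = T\ldownarrow^{(r,c)}_{(r',c)}$. A single Kohnert move in column $c$ only affects $L_c$, increasing it by one for $R \in [r', r-1]$ and leaving it unchanged otherwise. Thus if $c \notin \{i, i+2\}$ the invariant is preserved trivially, and if $c = i+2$ the only effect is to increase $L_{i+2}$, which strengthens the invariant. The main obstacle, and the case that requires the hypotheses of the lemma, is $c = i$.

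To handle $c = i$, I would exploit that $(r,i)$ is rightmost in its row, which forces $(r,i+2) \notin T$. This gives $L_{i+2}(T, r) = L_{i+2}(T, r-1)$, while $(r, i) \in T$ gives $L_i(T, r) = L_i(T, r-1) + 1$. Combining with the inductive invariant at row $r$ yields the strengthened inequality
$$L_{i+2}(T, r-1) \ge L_i(T, r-1) + 1.$$
Next I would propagate this extra unit of slack downward through $R = r-1, r-2, \ldots, r'$: since $r'$ is the first empty row below $r$ in column $i$, rows $r'+1, r'+2, \ldots, r-1$ are all occupied in column $i$, so $L_i(T,R) - L_i(T,R-1) = 1$ for $R$ in that range, whereas $L_{i+2}(T,R) - L_{i+2}(T,R-1) \in \{0,1\}$. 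A straightforward downward induction then yields $L_{i+2}(T, R) \ge L_i(T, R) + 1$ for every $R \in [r', r-1]$. Since the move increases $L_i$ by exactly $1$ precisely on this range and leaves $L_{i+2}$ unchanged, the invariant $L_{i+2}(T', R) \ge L_i(T', R)$ survives on the range and holds trivially elsewhere, completing the induction and hence the proof.
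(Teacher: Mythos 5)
Your proposal is correct and follows essentially the same route as the paper's own proof: both arguments show the stated property is preserved under a single Kohnert move, reduce to the case of a move in column $i$, and hinge on the same decisive fact that the moved cell $(r,i)$ is rightmost in its row, so $(r,i+2)\notin T$ (the paper packages this as the final contradiction, you use it to create the unit of slack). Your cumulative-count invariant $L_{i+2}(T,R)\ge L_i(T,R)$ and the downward propagation of slack are a cleaner, direct rewriting of the paper's proof by contradiction, but the underlying mechanism is identical.
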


\begin{proof} 
Note that the diagram $D_0(n)$ satisfies the property stated in the lemma, which we refer to as \textit{Property $(\ast)$}. For a contradiction, suppose that there exists a diagram $D\in KD(D_0(n))$ not satisfying Property $(\ast)$. Considering that $D_0(n)$ has Property $(\ast)$, we assume without loss of generality that there exists another diagram $D'\in KD(D_0(n))$ such that $D'$ has Property $(\ast)$ and $D$ is obtained from $D'$ by applying a Kohnert move, say at row $r$. 

Since $D$ does not have Property $(\ast)$, there exists a column $1\le i\le n-2$ and a $j\ge 1$ such that $(r_{i,j}(D),i)\in D$, but $D$ has strictly less than $j$ cells weakly below row $r_{i,j}(D)$ in column $i+2$. Moreover, since $D$ is obtained from $D'$ by applying a Kohnert move at row $r$ and $D'$ has the Property $(\ast)$, that Kohnert move must affect a cell in column $i$; that is, $D = D' \ldownarrow^{(r,i)}_{(r',i)}$ with $r'<r$. To see this, consider the following two observations.
\begin{itemize}
    \item[1)] Suppose there is a cell in a column other than $i$ or $i+2$ that is moved to a strictly lower row in forming $D$ from $D'$. Then the cells in columns $i$ and $i+2$ are unchanged and, thus, Property $(\ast)$ would be preserved in these columns.
    \item[2)] Suppose there is a cell in column $i+2$ that is moved to a strictly lower row in forming $D$ from $D'$. Then the number of cells weakly below a given cell in column $i$ can only increase, which means that Property $(\ast)$ would once again be preserved in these columns.
\end{itemize}
Now, since $D = D' \ldownarrow^{(r,i)}_{(r',i)}$, we have that  $(r'',i)\in D'\cap D$ for $r'< r''<r$. By our assumption, there exists $r^{\ast}$ such that $(r^{\ast},i)=(r_{i,j^{\ast}}(D),i)\in D$ and there are less than $j^{\ast}$ cells weakly below this cell in column $i+2$. Note that $r'\le r^{\ast}<r$, since a cell in position $$(r'',i)=(r_{i,j''}(D),i)=(r_{i,j''}(D'),i)$$ for $1\le r''<r'$ or $r<r''$ has the same number of cells weakly below it in column $i+2$ in $D$ as in $D'$. Now, since $D'$ satisfies Property $(\ast)$, $(r^*,i)=(r_{i,j^{\ast}-1}(D'),i)\in D'$ must have at least $j^{\ast}-1$ cells weakly below it in column $i+2$. As $(r^*,i)=(r_{i,j^{\ast}}(D),i)\in D$ has less than $j^{\ast}$ cells weakly below it in column $i$, it follows that $(r^*,i)=(r_{i,j^{\ast}-1}(D'),i)\in D'$ must have exactly $j^{\ast}-1$ cells weakly below it in column $i+2$. Thus, $(r'',i)=(r_{i,j''}(D'),i)\in D'$ for $r^{\ast}\le r''\le r$ must have exactly $j''$ cells weakly below it in column $i+2$. However, this implies that $(r'',i+2)\in D'$ for  $r^{\ast}< r''\le r$. In particular, $(r,i+2)\in D'$ so that applying a Kohnert move at row $r$ of $D'$ cannot affect the cell in position $(r,i)$, contradicting our assumption that $D = D' \ldownarrow^{(r,i)}_{(r',i)}$. The result follows.
\end{proof}

Finally, we are ready to finish the proof of  Theorem~\ref{thm:noddcheckered}.
\begin{proof}[Proof of Theorem~\ref{thm:noddcheckered}(b)]

Recall that $m= \left\lfloor\dfrac{n}{2}\right\rfloor$. To establish the result, we define a bijection $$\varphi:\ Min(D_0(n)) \longrightarrow KD(\mathbb{D}_m)$$ and conclude that, by Corollary~\ref{cor:numberdiagDm}, $|Min(D_0(n))| = |KD(\mathbb{D}_m)| = \binom{m}{2}$. Before defining $\varphi$, we note a few key properties of the diagrams in $Min(D_0(n))$. 
\begin{enumerate}
    \item[(i)] By the definition of the checkered diagram $D_0(n)$, each nonempty column has either $m$ or $m+1$ cells. In particular, for $Ch_n^1$, the odd columns have $m+1$ cells and the even columns have $m$ cells, while for $Ch_n^2$, the odd columns have $m$ cells and the even columns have $m+1$ cells. 
    
    \item[(ii)] Since there are at most $m+1$ cells in any column of $T\in Min(D_0(n))$, applying an argument similar to that used in the proof of Lemma~\ref{lem:2rminrest}~(a), it follows that all cells of $T$ must lie weakly below row $m+1$. In particular, the columns with $m+1$ cells have all cells bottom justified and no empty positions weakly below row $m+1$. Moreover, if $D_0(n)= Ch^2_n$, then one can also conclude that the $m$ cells in column $n$ are also bottom justified.

    \item[(iii)] By Lemma~\ref{lem:Ch}, the empty-row sequence that one can associate with the columns of $T\in Min(D_0(n))$ containing exactly $m$ cells (from left to right) is a weakly increasing sequence of entries in $\{1,\hdots,m+1\}$. 
\end{enumerate}
We now define $\varphi$ and $\varphi^{-1}$.
\begin{itemize}
    \item[] $\varphi$: Given $T\in Min(D_0(n))$, we define $D:=\varphi(T)$ to be the diagram obtained by removing the columns containing $m+1$ cells, along with the last nonempty column if $D_0(n)=Ch_n^2$, and then left justifying the remaining nonempty columns. More formally, if $D_0(n)=Ch_n^1$, then $(r,i)\in D$ if and only if $(r,2i)\in T$ for $1\le i\le m$; while if $D_0(n)=Ch_n^2$, then $(r,i)\in D$ if and only if $(r,2i-1)\in T$ for $1\le i\le m$.

    \item[] $\varphi^{-1}$: Given $D\in KD(\mathbb{D}_m)$ and $n$, the definition of $T:= \varphi^{-1}(D)$ depends on whether $D_0(n)=Ch_n^1$ or $Ch_n^2$. If $D_0(n)=Ch_n^1$, then we define $T$ to be the diagram obtained by adding a column with $m+1$ bottom justified cells to the immediate left of each nonempty column of $D$ and to the right of the rightmost nonempty column of $D$; that is, $(r,2i-1)\in T$ for $1\le r\le m+1$ and $1\le i\le m$, and $(r,2i)\in T$ if and only if $(r,i)\in D$ for $1\le i\le m$. On the other hand, if $D_0(n)=Ch_n^2$, then we define $T$ to be the diagram obtained by adding a column with $m+1$ bottom justified cells to the immediate right of each nonempty column of $D$ as well as a column with $m$ bottom justified cells in column $n$; that is, $(r,2i)\in T$ for $1\le r\le m+1$ and $1\le i\le m$, $(r,n)\in T$ for $1\le r\le m$, and $(r,2i-1)\in T$ if and only if $(r,i)\in D$ for $1\le i\le m$.
       
\end{itemize}
In Example~\ref{ex:bijection}, we illustrate several examples of $\varphi$ and $\varphi^{-1}$.

Considering observations $(i)$ -- $(iii)$ above along with Lemma~\ref{lem:sqform}, for $T\in Min(D_0(n))$ we have $\varphi(T)\in KD(\mathbb{D}_m)$. As for $\varphi^{-1}$, given $D\in KD(\mathbb{D}_m)$ it is clear that the diagram $T=\varphi^{-1}(D)$ is a minimal diagram; that is, $T$ is fixed by all Kohnert moves. To see that $T\in Min(D_0(n))$, note that one can form $T$ from $D_0(n)$ by applying the following sequence of Kohnert moves.
\begin{itemize}
    \item[] Step 1: If row 1 of $D$ contains $i_1$ empty positions, perform $$\begin{cases}
        m-i_1, & if~D_0(n)=Ch_n^1 \\
        m-i_1+1, &if~D_0(n)=Ch_n^2
    \end{cases}$$
    Kohnert moves at row $2$.

    \item[] Step 2: Perform $$\begin{cases}
        m+1, & if~D_0(n)=Ch_n^1 \\
        m, & if~D_0(n)=Ch_n^2
    \end{cases}$$ 
    Kohnert moves at row 3 and then, working from rows 4 up to $n$, apply enough Kohnert moves to drop all cells down one row.
    
    \item[] Step $2j-1$: If row $j$ of $D$ contains $i_j$ empty spaces and rows $1$ through $j-1$ contain $e$ empty cells in total, perform $$\begin{cases}
        m-i_j-e, & if~D_0(n)=Ch_n^1 \\
        m-i_j-e+1, &~if D_0(n)=Ch_n^2
    \end{cases}$$
    Kohnert moves at row $j+1$. 

    \item[] Step $2j$: Perform $$\begin{cases}
        m+1, & if~D_0(n)=Ch_n^1 \\
        m, & if~D_0(n)=Ch_n^2
    \end{cases}$$
    Kohnert moves at row $j+2$ and then, working from rows $j+3$ up to $n$, apply enough Kohnert moves to drop all cells down one row.
\end{itemize}
Thus, for $D\in KD(\mathbb{D}_m)$ we have $\varphi^{-1}(D)\in Min(D_0(n))$. Moreover, by definition, $\varphi(\varphi^{-1}(D))=D$ for $D\in KD(\mathbb{D}_m)$ and $\varphi^{-1}(\varphi(T))=T$ for $T\in Min(D_0(n))$. The result follows.
\end{proof}

Considering the proof of Theorem~\ref{thm:noddcheckered}, we have the following immediate corollary.
\begin{corollary}~
    \begin{enumerate}
        \item[\textup{(a)}] If $D_0(n)=Ch_n^1$, then $T\in Min(D_0(n))$ if and only if all cells of $T$ are contained in rows weakly below $\left\lceil\frac{n}{2}\right\rceil$ and the unique empty positions below cells in nonempty even columns of $T$ occur in weakly increasing rows from left to right.
        \item[\textup{(b)}] If $D_0(n)=Ch_n^2$, then $T\in Min(D_0(n))$ if and only if all cells of $T$ are contained in rows weakly below $\left\lceil\frac{n}{2}\right\rceil$, the cells of column $n$ are bottom justified, and the unique empty positions below cells in all other nonempty odd columns of $T$ occur in weakly increasing rows from left to right.
    \end{enumerate}
    See Figure~\ref{fig:bijection} for illustrations of the minimal elements of $Ch_5^1$ \textup(left\textup) and $Ch_5^2$ \textup(right\textup). 
\end{corollary}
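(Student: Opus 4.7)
The plan is to deduce both parts of the corollary as direct consequences of the analyses already performed. I would split on the parity of $n$. When $n$ is even, every nonempty column of $D_0(n)$ contains exactly $n/2$ cells, so Proposition~\ref{prop:nummingen} applies and $Min(D_0(n))$ consists of the single bottom-justified diagram. In this diagram all cells lie in rows $1$ through $n/2=\lceil n/2\rceil$ and every nonempty column is completely filled in its lowest $n/2$ rows, so both listed conditions are vacuously satisfied and the equivalence is immediate.

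For $n$ odd, set $m=\lfloor n/2\rfloor$. The forward direction follows by invoking observations (i)--(iii) recorded in the proof of Theorem~\ref{thm:noddcheckered}(b): any $T\in Min(D_0(n))$ has all its cells in rows $1$ through $m+1$, every column containing $m+1$ cells is bottom-justified (and in the $Ch_n^2$ case the rightmost column $n$ is also bottom-justified, since there is nothing to its right to block a Kohnert move), and by Lemma~\ref{lem:Ch} the empty-row sequence of the remaining $m$-cell columns is weakly increasing from left to right. In $Ch_n^1$ these $m$-cell columns are precisely the nonempty even columns, while in $Ch_n^2$ they are the nonempty odd columns other than column $n$, which translates directly into the stated characterization.

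For the converse, suppose $T$ satisfies the listed conditions. I would verify that $T\in Min(D_0(n))$ by showing that $T$ lies in the image of $\varphi^{-1}$ from the proof of Theorem~\ref{thm:noddcheckered}(b). The hypothesis that all cells of $T$ lie weakly below row $m+1$ together with the fixed cell counts forces the $m+1$-cell columns of $T$ (and, for $Ch_n^2$, column $n$ as well) to be bottom-justified. Deleting those columns and left-justifying the remainder produces $\varphi(T)$, a diagram of $m^2$ cells in the $(m+1)\times m$ box with $m$ cells per column and weakly increasing empty-row sequence. Lemma~\ref{lem:sqform} then yields $\varphi(T)\in KD(\mathbb{D}_m)$, and the explicit construction of $\varphi^{-1}$ gives $T=\varphi^{-1}(\varphi(T))\in Min(D_0(n))$. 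The only delicate bookkeeping is the asymmetric treatment of column $n$ in part (b): in $Ch_n^2$ this column has only $m$ cells, so its bottom-justification is not forced by the row bound but rather by the general principle that the rightmost column of a minimal diagram must itself be bottom-justified.
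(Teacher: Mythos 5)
Your proof is correct and takes essentially the same route as the paper, which offers no separate argument but declares the statement an immediate consequence of the proof of Theorem~\ref{thm:noddcheckered}; you supply precisely the ingredients that proof relies on (observations (i)--(iii), Lemma~\ref{lem:Ch}, Lemma~\ref{lem:sqform}, and the bijection $\varphi$), together with Proposition~\ref{prop:nummingen} for even $n$. The identification $T=\varphi^{-1}(\varphi(T))$ in the converse is exactly the right detail to make explicit.
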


As a consequence of Theorem~\ref{thm:noddcheckered}, we get the following characterization for Kohnert posets associated with checkered diagrams which are bounded.

\begin{theorem}\label{thm:checkbound}
    The poset $\mathcal{P}(D_0(n))$ is bounded if and only if $D_0(n)=Ch_1^1$ or $n$ is even.
\end{theorem}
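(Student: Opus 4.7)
The plan is to derive Theorem~\ref{thm:checkbound} as an essentially immediate consequence of Lemma~\ref{lem:bmum} combined with Theorem~\ref{thm:noddcheckered}. Since Lemma~\ref{lem:bmum} tells us that a Kohnert poset is bounded exactly when it has a unique minimal element, the question reduces entirely to counting $|Min(D_0(n))|$ for each parity of $n$.

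For the forward direction (boundedness implies $n$ even or $D_0(n)=Ch_1^1$), I would argue the contrapositive: if $n$ is odd and $n\ge 3$, then by Theorem~\ref{thm:noddcheckered}(b) we have $|Min(D_0(n))|=\binom{2m}{m}$ with $m=\lfloor n/2\rfloor\ge 1$. Since $\binom{2m}{m}\ge 2$ for all $m\ge 1$, we get $|Min(D_0(n))|\ge 2$, so Lemma~\ref{lem:bmum} forces $\mathcal{P}(D_0(n))$ to be unbounded. The remaining odd case is $n=1$ with $D_0(1)=Ch_1^2$, which is the empty diagram and hence excluded from the statement.

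For the backward direction, I split into two cases. If $n$ is even, Theorem~\ref{thm:noddcheckered}(a) immediately gives $|Min(D_0(n))|=1$, and Lemma~\ref{lem:bmum} yields boundedness. If $D_0(n)=Ch_1^1$, then $m=\lfloor 1/2\rfloor=0$ and $\binom{2m}{m}=\binom{0}{0}=1$, so Theorem~\ref{thm:noddcheckered}(b) still delivers $|Min(Ch_1^1)|=1$; alternatively, one can note directly that $Ch_1^1$ is a single cell and $\mathcal{P}(Ch_1^1)$ is trivially bounded as its Hasse diagram in Figure~\ref{fig:checkHasse} shows.

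There is essentially no obstacle here: all the combinatorial content has already been packaged into Theorem~\ref{thm:noddcheckered}, and the proof is a short assembly of that result with Lemma~\ref{lem:bmum} plus the elementary inequality $\binom{2m}{m}\ge 2$ for $m\ge 1$. The only point requiring care is handling the boundary case $n=1$ separately to justify the explicit mention of $Ch_1^1$ in the statement.
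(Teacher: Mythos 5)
Your proposal is correct and follows essentially the same route as the paper, which states Theorem~\ref{thm:checkbound} as a direct consequence of Theorem~\ref{thm:noddcheckered} combined (implicitly) with Lemma~\ref{lem:bmum}; your write-up simply makes explicit the observation that $\binom{2m}{m}\ge 2$ for $m\ge 1$ and the degenerate cases $Ch_1^1$ and $Ch_1^2$.
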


We finish this section with an example illustrating the bijections defined in the proof of Theorem~\ref{thm:noddcheckered}.
\begin{example}\label{ex:bijection}
    In Figure~\ref{fig:bijection} we illustrate the bijections $\varphi$ and $\varphi^{-1}$ defined in the proof of Theorem~\ref{thm:noddcheckered} for $Ch_5^1$ and $Ch_5^2$. The diagrams on the left are the elements of $Min(Ch_5^1)$, the diagrams in the middle are the corresponding elements of $KD(\mathbb{D}_2)$, and the elements on the right are the corresponding elements of $Min(Ch_5^2)$.
    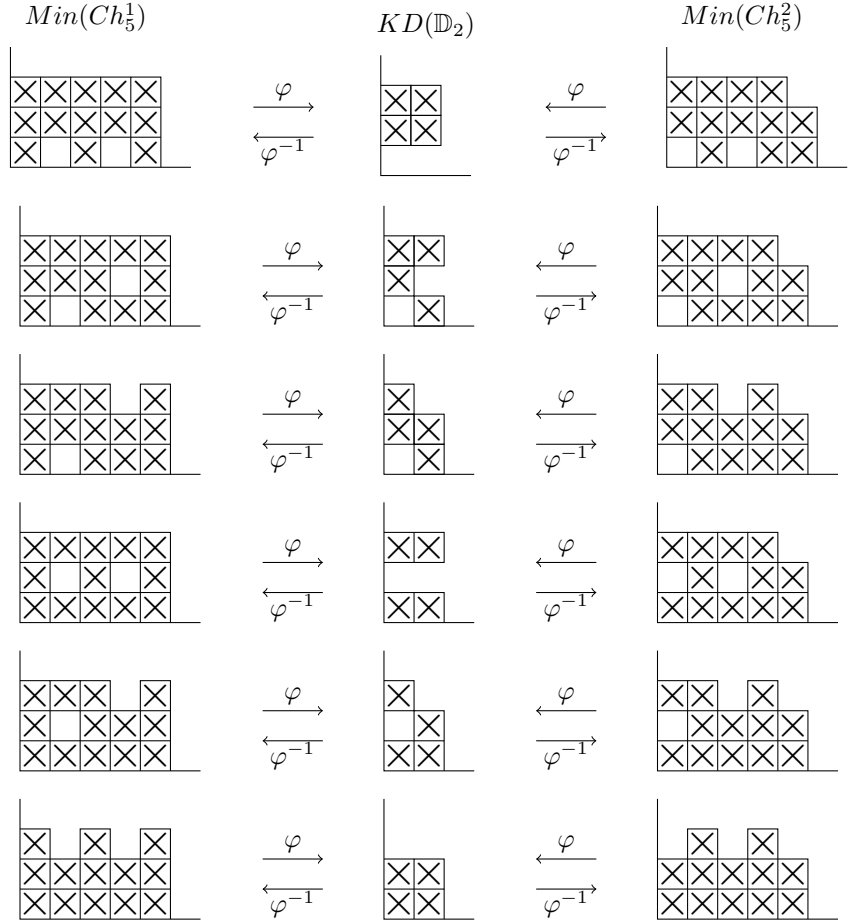
\begin{figure}[H]
        \centering
        $$\begin{tikzpicture}[scale=0.4]
\node at (2.5,5) {$Min(Ch^1_5)$};
  \node at (0.5, 0.5) {$\bigtimes$};
  \node at (0.5, 1.5) {$\bigtimes$};
  \node at (0.5, 2.5) {$\bigtimes$};
  \node at (2.5, 0.5) {$\bigtimes$};
  \node at (2.5, 1.5) {$\bigtimes$};
  \node at (2.5, 2.5) {$\bigtimes$};
  \node at (4.5, 0.5) {$\bigtimes$};
  \node at (4.5, 1.5) {$\bigtimes$};
  \node at (4.5, 2.5) {$\bigtimes$};
  \draw (0,4)--(0,0)--(6,0);
  \draw (0,3)--(1,3)--(1,0);
  \draw (0,1)--(1,1);
  \draw (0,2)--(1,2);
  \draw (2,0)--(2,3)--(3,3)--(3,0);
  \draw (2,1)--(3,1);
  \draw (2,2)--(3,2);
  \draw (4,0)--(4,3)--(5,3)--(5,0);
  \draw (4,1)--(5,1);
  \draw (4,2)--(5,2);
  \node at (1.5, 1.5) {$\bigtimes$};
  \node at (1.5, 2.5) {$\bigtimes$};
  \draw (1,3)--(2,3);
  \draw (1,2)--(2,2);
  \draw (1,1)--(2,1);
  \node at (3.5, 1.5) {$\bigtimes$};
  \node at (3.5, 2.5) {$\bigtimes$};
  \draw (3,3)--(4,3);
  \draw (3,2)--(4,2);
  \draw (3,1)--(4,1);
\end{tikzpicture}\quad\quad \begin{tikzpicture}[scale=0.4]
  \node at (0,0) {};
  \node at (1, 2.5) {$\varphi$};
   \node at (1, 0.5) {$\varphi^{-1}$};
  \draw[->] (0,2)--(2,2);
  \draw[->] (2,1)--(0,1);
\end{tikzpicture}\quad\quad \begin{tikzpicture}[scale=0.4]
\node at (1.5,5) {$KD(\mathbb{D}_2)$};
  \node at (0.5, 1.5) {$\bigtimes$};
  \node at (0.5, 2.5) {$\bigtimes$};
   \node at (1.5, 1.5) {$\bigtimes$};
  \node at (1.5, 2.5) {$\bigtimes$};
  \draw (0,4)--(0,0)--(3,0);
  \draw (0,1)--(1,1)--(1,2)--(0,2);
  \draw (0,2)--(1,2)--(1,3)--(0,3);
  \draw (1,1)--(2,1)--(2,2)--(1,2)--(1,1);
  \draw (1,2)--(2,2)--(2,3)--(1,3)--(1,2);
\end{tikzpicture}\quad\quad \begin{tikzpicture}[scale=0.4]
  \node at (0,0) {};
  \node at (1, 2.5) {$\varphi$};
   \node at (1, 0.5) {$\varphi^{-1}$};
  \draw[->] (2,2)--(0,2);
  \draw[->] (0,1)--(2,1);
\end{tikzpicture}\quad\quad  \begin{tikzpicture}[scale=0.4]
\node at (2.5,5) {$Min(Ch^2_5)$};
  \node at (1.5, 0.5) {$\bigtimes$};
  \node at (1.5, 1.5) {$\bigtimes$};
  \node at (1.5, 2.5) {$\bigtimes$};
  \node at (3.5, 0.5) {$\bigtimes$};
  \node at (3.5, 1.5) {$\bigtimes$};
  \node at (3.5, 2.5) {$\bigtimes$};
  \node at (4.5, 0.5) {$\bigtimes$};
  \node at (4.5, 1.5) {$\bigtimes$};
  \draw (0,4)--(0,0)--(6,0);
  \draw (1,0)--(1,3)--(2,3)--(2,0);
  \draw (1,1)--(2,1);
  \draw (1,2)--(2,2);
  \draw (3,0)--(3,3)--(4,3)--(4,0);
  \draw (3,1)--(4,1);
  \draw (3,2)--(4,2);
  \draw (4,2)--(5,2)--(5,0);
  \draw (4,1)--(5,1);
  \node at (0.5, 1.5) {$\bigtimes$};
  \node at (0.5, 2.5) {$\bigtimes$};
  \draw (0,3)--(1,3);
  \draw (0,2)--(1,2);
  \draw (0,1)--(1,1);
  \node at (2.5, 1.5) {$\bigtimes$};
  \node at (2.5, 2.5) {$\bigtimes$};
  \draw (2,3)--(3,3);
  \draw (2,2)--(3,2);
  \draw (2,1)--(3,1);
\end{tikzpicture}$$ $$\begin{tikzpicture}[scale=0.4]
  \node at (0.5, 0.5) {$\bigtimes$};
  \node at (0.5, 1.5) {$\bigtimes$};
  \node at (0.5, 2.5) {$\bigtimes$};
  \node at (2.5, 0.5) {$\bigtimes$};
  \node at (2.5, 1.5) {$\bigtimes$};
  \node at (2.5, 2.5) {$\bigtimes$};
  \node at (4.5, 0.5) {$\bigtimes$};
  \node at (4.5, 1.5) {$\bigtimes$};
  \node at (4.5, 2.5) {$\bigtimes$};
  \draw (0,4)--(0,0)--(6,0);
  \draw (0,3)--(1,3)--(1,0);
  \draw (0,1)--(1,1);
  \draw (0,2)--(1,2);
  \draw (2,0)--(2,3)--(3,3)--(3,0);
  \draw (2,1)--(3,1);
  \draw (2,2)--(3,2);
  \draw (4,0)--(4,3)--(5,3)--(5,0);
  \draw (4,1)--(5,1);
  \draw (4,2)--(5,2);
  \node at (1.5, 1.5) {$\bigtimes$};
  \node at (1.5, 2.5) {$\bigtimes$};
  \draw (1,3)--(2,3);
  \draw (1,2)--(2,2);
  \draw (1,1)--(2,1);
  \node at (3.5, 0.5) {$\bigtimes$};
  \node at (3.5, 2.5) {$\bigtimes$};
  \draw (3,3)--(4,3);
  \draw (3,2)--(4,2);
  \draw (3,1)--(4,1);
\end{tikzpicture}\quad\quad \begin{tikzpicture}[scale=0.4]
  \node at (0,0) {};
  \node at (1, 2.5) {$\varphi$};
   \node at (1, 0.5) {$\varphi^{-1}$};
  \draw[->] (0,2)--(2,2);
  \draw[->] (2,1)--(0,1);
\end{tikzpicture}\quad\quad \begin{tikzpicture}[scale=0.4]
  \node at (0.5, 1.5) {$\bigtimes$};
  \node at (0.5, 2.5) {$\bigtimes$};
   \node at (1.5, 0.5) {$\bigtimes$};
  \node at (1.5, 2.5) {$\bigtimes$};
  \draw (0,4)--(0,0)--(3,0);
  \draw (0,1)--(1,1)--(1,2)--(0,2);
  \draw (0,2)--(1,2)--(1,3)--(0,3);
  \draw (1,0)--(2,0)--(2,1)--(1,1)--(1,0);
  \draw (1,2)--(2,2)--(2,3)--(1,3)--(1,2);
\end{tikzpicture}\quad\quad \begin{tikzpicture}[scale=0.4]
  \node at (0,0) {};
  \node at (1, 2.5) {$\varphi$};
   \node at (1, 0.5) {$\varphi^{-1}$};
  \draw[->] (2,2)--(0,2);
  \draw[->] (0,1)--(2,1);
\end{tikzpicture}\quad\quad  \begin{tikzpicture}[scale=0.4]
  \node at (1.5, 0.5) {$\bigtimes$};
  \node at (1.5, 1.5) {$\bigtimes$};
  \node at (1.5, 2.5) {$\bigtimes$};
  \node at (3.5, 0.5) {$\bigtimes$};
  \node at (3.5, 1.5) {$\bigtimes$};
  \node at (3.5, 2.5) {$\bigtimes$};
  \node at (4.5, 0.5) {$\bigtimes$};
  \node at (4.5, 1.5) {$\bigtimes$};
  \draw (0,4)--(0,0)--(6,0);
  \draw (1,0)--(1,3)--(2,3)--(2,0);
  \draw (1,1)--(2,1);
  \draw (1,2)--(2,2);
  \draw (3,0)--(3,3)--(4,3)--(4,0);
  \draw (3,1)--(4,1);
  \draw (3,2)--(4,2);
  \draw (4,2)--(5,2)--(5,0);
  \draw (4,1)--(5,1);
  \node at (0.5, 1.5) {$\bigtimes$};
  \node at (0.5, 2.5) {$\bigtimes$};
  \draw (0,3)--(1,3);
  \draw (0,2)--(1,2);
  \draw (0,1)--(1,1);
  \node at (2.5, 0.5) {$\bigtimes$};
  \node at (2.5, 2.5) {$\bigtimes$};
  \draw (2,3)--(3,3);
  \draw (2,2)--(3,2);
  \draw (2,1)--(3,1);
\end{tikzpicture}$$
$$\begin{tikzpicture}[scale=0.4]
  \node at (0.5, 0.5) {$\bigtimes$};
  \node at (0.5, 1.5) {$\bigtimes$};
  \node at (0.5, 2.5) {$\bigtimes$};
  \node at (2.5, 0.5) {$\bigtimes$};
  \node at (2.5, 1.5) {$\bigtimes$};
  \node at (2.5, 2.5) {$\bigtimes$};
  \node at (4.5, 0.5) {$\bigtimes$};
  \node at (4.5, 1.5) {$\bigtimes$};
  \node at (4.5, 2.5) {$\bigtimes$};
  \draw (0,4)--(0,0)--(6,0);
  \draw (0,3)--(1,3)--(1,0);
  \draw (0,1)--(1,1);
  \draw (0,2)--(1,2);
  \draw (2,0)--(2,3)--(3,3)--(3,0);
  \draw (2,1)--(3,1);
  \draw (2,2)--(3,2);
  \draw (4,0)--(4,3)--(5,3)--(5,0);
  \draw (4,1)--(5,1);
  \draw (4,2)--(5,2);
  \node at (1.5, 1.5) {$\bigtimes$};
  \node at (1.5, 2.5) {$\bigtimes$};
  \draw (1,3)--(2,3);
  \draw (1,2)--(2,2);
  \draw (1,1)--(2,1);
  \node at (3.5, 0.5) {$\bigtimes$};
  \node at (3.5, 1.5) {$\bigtimes$};
  \draw (3,2)--(4,2);
  \draw (3,1)--(4,1);
\end{tikzpicture}\quad\quad \begin{tikzpicture}[scale=0.4]
  \node at (0,0) {};
  \node at (1, 2.5) {$\varphi$};
   \node at (1, 0.5) {$\varphi^{-1}$};
  \draw[->] (0,2)--(2,2);
  \draw[->] (2,1)--(0,1);
\end{tikzpicture}\quad\quad \begin{tikzpicture}[scale=0.4]
  \node at (0.5, 1.5) {$\bigtimes$};
  \node at (0.5, 2.5) {$\bigtimes$};
   \node at (1.5, 0.5) {$\bigtimes$};
  \node at (1.5, 1.5) {$\bigtimes$};
  \draw (0,4)--(0,0)--(3,0);
  \draw (0,1)--(1,1)--(1,2)--(0,2);
  \draw (0,2)--(1,2)--(1,3)--(0,3);
  \draw (1,0)--(2,0)--(2,1)--(1,1)--(1,0);
  \draw (1,1)--(2,1)--(2,2)--(1,2)--(1,1);
\end{tikzpicture}\quad\quad \begin{tikzpicture}[scale=0.4]
  \node at (0,0) {};
  \node at (1, 2.5) {$\varphi$};
   \node at (1, 0.5) {$\varphi^{-1}$};
  \draw[->] (2,2)--(0,2);
  \draw[->] (0,1)--(2,1);
\end{tikzpicture}\quad\quad  \begin{tikzpicture}[scale=0.4]
  \node at (1.5, 0.5) {$\bigtimes$};
  \node at (1.5, 1.5) {$\bigtimes$};
  \node at (1.5, 2.5) {$\bigtimes$};
  \node at (3.5, 0.5) {$\bigtimes$};
  \node at (3.5, 1.5) {$\bigtimes$};
  \node at (3.5, 2.5) {$\bigtimes$};
  \node at (4.5, 0.5) {$\bigtimes$};
  \node at (4.5, 1.5) {$\bigtimes$};
  \draw (0,4)--(0,0)--(6,0);
  \draw (1,0)--(1,3)--(2,3)--(2,0);
  \draw (1,1)--(2,1);
  \draw (1,2)--(2,2);
  \draw (3,0)--(3,3)--(4,3)--(4,0);
  \draw (3,1)--(4,1);
  \draw (3,2)--(4,2);
  \draw (4,2)--(5,2)--(5,0);
  \draw (4,1)--(5,1);
  \node at (0.5, 1.5) {$\bigtimes$};
  \node at (0.5, 2.5) {$\bigtimes$};
  \draw (0,3)--(1,3);
  \draw (0,2)--(1,2);
  \draw (0,1)--(1,1);
  \node at (2.5, 0.5) {$\bigtimes$};
  \node at (2.5, 1.5) {$\bigtimes$};
  \draw (2,2)--(3,2);
  \draw (2,1)--(3,1);
\end{tikzpicture}$$
$$\begin{tikzpicture}[scale=0.4]
  \node at (0.5, 0.5) {$\bigtimes$};
  \node at (0.5, 1.5) {$\bigtimes$};
  \node at (0.5, 2.5) {$\bigtimes$};
  \node at (2.5, 0.5) {$\bigtimes$};
  \node at (2.5, 1.5) {$\bigtimes$};
  \node at (2.5, 2.5) {$\bigtimes$};
  \node at (4.5, 0.5) {$\bigtimes$};
  \node at (4.5, 1.5) {$\bigtimes$};
  \node at (4.5, 2.5) {$\bigtimes$};
  \draw (0,4)--(0,0)--(6,0);
  \draw (0,3)--(1,3)--(1,0);
  \draw (0,1)--(1,1);
  \draw (0,2)--(1,2);
  \draw (2,0)--(2,3)--(3,3)--(3,0);
  \draw (2,1)--(3,1);
  \draw (2,2)--(3,2);
  \draw (4,0)--(4,3)--(5,3)--(5,0);
  \draw (4,1)--(5,1);
  \draw (4,2)--(5,2);
  \node at (1.5, 0.5) {$\bigtimes$};
  \node at (1.5, 2.5) {$\bigtimes$};
  \draw (1,3)--(2,3);
  \draw (1,2)--(2,2);
  \draw (1,1)--(2,1);
  \node at (3.5, 0.5) {$\bigtimes$};
  \node at (3.5, 2.5) {$\bigtimes$};
  \draw (3,3)--(4,3);
  \draw (3,2)--(4,2);
  \draw (3,1)--(4,1);
\end{tikzpicture}\quad\quad \begin{tikzpicture}[scale=0.4]
  \node at (0,0) {};
  \node at (1, 2.5) {$\varphi$};
   \node at (1, 0.5) {$\varphi^{-1}$};
  \draw[->] (0,2)--(2,2);
  \draw[->] (2,1)--(0,1);
\end{tikzpicture}\quad\quad \begin{tikzpicture}[scale=0.4]
  \node at (0.5, 0.5) {$\bigtimes$};
  \node at (0.5, 2.5) {$\bigtimes$};
   \node at (1.5, 0.5) {$\bigtimes$};
  \node at (1.5, 2.5) {$\bigtimes$};
  \draw (0,4)--(0,0)--(3,0);
  \draw (0,2)--(2,2)--(2,3)--(0,3);
  \draw (1,2)--(1,3);
  \draw (0,1)--(2,1)--(2,0);
  \draw (1,0)--(1,1);
\end{tikzpicture}\quad\quad \begin{tikzpicture}[scale=0.4]
  \node at (0,0) {};
  \node at (1, 2.5) {$\varphi$};
   \node at (1, 0.5) {$\varphi^{-1}$};
  \draw[->] (2,2)--(0,2);
  \draw[->] (0,1)--(2,1);
\end{tikzpicture}\quad\quad  \begin{tikzpicture}[scale=0.4]
  \node at (1.5, 0.5) {$\bigtimes$};
  \node at (1.5, 1.5) {$\bigtimes$};
  \node at (1.5, 2.5) {$\bigtimes$};
  \node at (3.5, 0.5) {$\bigtimes$};
  \node at (3.5, 1.5) {$\bigtimes$};
  \node at (3.5, 2.5) {$\bigtimes$};
  \node at (4.5, 0.5) {$\bigtimes$};
  \node at (4.5, 1.5) {$\bigtimes$};
  \draw (0,4)--(0,0)--(6,0);
  \draw (1,0)--(1,3)--(2,3)--(2,0);
  \draw (1,1)--(2,1);
  \draw (1,2)--(2,2);
  \draw (3,0)--(3,3)--(4,3)--(4,0);
  \draw (3,1)--(4,1);
  \draw (3,2)--(4,2);
  \draw (4,2)--(5,2)--(5,0);
  \draw (4,1)--(5,1);
  \node at (0.5, 0.5) {$\bigtimes$};
  \node at (0.5, 2.5) {$\bigtimes$};
  \draw (0,3)--(1,3);
  \draw (0,2)--(1,2);
  \draw (0,1)--(1,1);
  \node at (2.5, 0.5) {$\bigtimes$};
  \node at (2.5, 2.5) {$\bigtimes$};
  \draw (2,3)--(3,3);
  \draw (2,2)--(3,2);
  \draw (2,1)--(3,1);
\end{tikzpicture}$$
$$\begin{tikzpicture}[scale=0.4]
  \node at (0.5, 0.5) {$\bigtimes$};
  \node at (0.5, 1.5) {$\bigtimes$};
  \node at (0.5, 2.5) {$\bigtimes$};
  \node at (2.5, 0.5) {$\bigtimes$};
  \node at (2.5, 1.5) {$\bigtimes$};
  \node at (2.5, 2.5) {$\bigtimes$};
  \node at (4.5, 0.5) {$\bigtimes$};
  \node at (4.5, 1.5) {$\bigtimes$};
  \node at (4.5, 2.5) {$\bigtimes$};
  \draw (0,4)--(0,0)--(6,0);
  \draw (0,3)--(1,3)--(1,0);
  \draw (0,1)--(1,1);
  \draw (0,2)--(1,2);
  \draw (2,0)--(2,3)--(3,3)--(3,0);
  \draw (2,1)--(3,1);
  \draw (2,2)--(3,2);
  \draw (4,0)--(4,3)--(5,3)--(5,0);
  \draw (4,1)--(5,1);
  \draw (4,2)--(5,2);
  \node at (1.5, 0.5) {$\bigtimes$};
  \node at (1.5, 2.5) {$\bigtimes$};
  \draw (1,3)--(2,3);
  \draw (1,2)--(2,2);
  \draw (1,1)--(2,1);
  \node at (3.5, 0.5) {$\bigtimes$};
  \node at (3.5, 1.5) {$\bigtimes$};
  \draw (3,2)--(4,2);
  \draw (3,1)--(4,1);
\end{tikzpicture}\quad\quad \begin{tikzpicture}[scale=0.4]
  \node at (0,0) {};
  \node at (1, 2.5) {$\varphi$};
   \node at (1, 0.5) {$\varphi^{-1}$};
  \draw[->] (0,2)--(2,2);
  \draw[->] (2,1)--(0,1);
\end{tikzpicture}\quad\quad \begin{tikzpicture}[scale=0.4]
  \node at (0.5, 0.5) {$\bigtimes$};
  \node at (0.5, 2.5) {$\bigtimes$};
   \node at (1.5, 0.5) {$\bigtimes$};
  \node at (1.5, 1.5) {$\bigtimes$};
  \draw (0,4)--(0,0)--(3,0);
  \draw (0,2)--(1,2)--(1,3)--(0,3);
  \draw (1,1)--(1,2)--(2,2)--(2,1);
  \draw (0,1)--(2,1)--(2,0);
  \draw (1,0)--(1,1);
\end{tikzpicture}\quad\quad \begin{tikzpicture}[scale=0.4]
  \node at (0,0) {};
  \node at (1, 2.5) {$\varphi$};
   \node at (1, 0.5) {$\varphi^{-1}$};
  \draw[->] (2,2)--(0,2);
  \draw[->] (0,1)--(2,1);
\end{tikzpicture}\quad\quad  \begin{tikzpicture}[scale=0.4]
  \node at (1.5, 0.5) {$\bigtimes$};
  \node at (1.5, 1.5) {$\bigtimes$};
  \node at (1.5, 2.5) {$\bigtimes$};
  \node at (3.5, 0.5) {$\bigtimes$};
  \node at (3.5, 1.5) {$\bigtimes$};
  \node at (3.5, 2.5) {$\bigtimes$};
  \node at (4.5, 0.5) {$\bigtimes$};
  \node at (4.5, 1.5) {$\bigtimes$};
  \draw (0,4)--(0,0)--(6,0);
  \draw (1,0)--(1,3)--(2,3)--(2,0);
  \draw (1,1)--(2,1);
  \draw (1,2)--(2,2);
  \draw (3,0)--(3,3)--(4,3)--(4,0);
  \draw (3,1)--(4,1);
  \draw (3,2)--(4,2);
  \draw (4,2)--(5,2)--(5,0);
  \draw (4,1)--(5,1);
  \node at (0.5, 0.5) {$\bigtimes$};
  \node at (0.5, 2.5) {$\bigtimes$};
  \draw (0,3)--(1,3);
  \draw (0,2)--(1,2);
  \draw (0,1)--(1,1);
  \node at (2.5, 0.5) {$\bigtimes$};
  \node at (2.5, 1.5) {$\bigtimes$};
  \draw (2,2)--(3,2);
  \draw (2,1)--(3,1);
\end{tikzpicture}$$
$$\begin{tikzpicture}[scale=0.4]
  \node at (0.5, 0.5) {$\bigtimes$};
  \node at (0.5, 1.5) {$\bigtimes$};
  \node at (0.5, 2.5) {$\bigtimes$};
  \node at (2.5, 0.5) {$\bigtimes$};
  \node at (2.5, 1.5) {$\bigtimes$};
  \node at (2.5, 2.5) {$\bigtimes$};
  \node at (4.5, 0.5) {$\bigtimes$};
  \node at (4.5, 1.5) {$\bigtimes$};
  \node at (4.5, 2.5) {$\bigtimes$};
  \draw (0,4)--(0,0)--(6,0);
  \draw (0,3)--(1,3)--(1,0);
  \draw (0,1)--(1,1);
  \draw (0,2)--(1,2);
  \draw (2,0)--(2,3)--(3,3)--(3,0);
  \draw (2,1)--(3,1);
  \draw (2,2)--(3,2);
  \draw (4,0)--(4,3)--(5,3)--(5,0);
  \draw (4,1)--(5,1);
  \draw (4,2)--(5,2);
  \node at (1.5, 0.5) {$\bigtimes$};
  \node at (1.5, 1.5) {$\bigtimes$};
  \draw (1,2)--(2,2);
  \draw (1,1)--(2,1);
  \node at (3.5, 0.5) {$\bigtimes$};
  \node at (3.5, 1.5) {$\bigtimes$};
  \draw (3,2)--(4,2);
  \draw (3,1)--(4,1);
\end{tikzpicture}\quad\quad \begin{tikzpicture}[scale=0.4]
  \node at (0,0) {};
  \node at (1, 2.5) {$\varphi$};
   \node at (1, 0.5) {$\varphi^{-1}$};
  \draw[->] (0,2)--(2,2);
  \draw[->] (2,1)--(0,1);
\end{tikzpicture}\quad\quad \begin{tikzpicture}[scale=0.4]
  \node at (0.5, 0.5) {$\bigtimes$};
  \node at (0.5, 1.5) {$\bigtimes$};
   \node at (1.5, 0.5) {$\bigtimes$};
  \node at (1.5, 1.5) {$\bigtimes$};
  \draw (0,4)--(0,0)--(3,0);
  \draw (0,2)--(1,2);
  \draw (1,1)--(1,2)--(2,2)--(2,1);
  \draw (0,1)--(2,1)--(2,0);
  \draw (1,0)--(1,1);
\end{tikzpicture}\quad\quad \begin{tikzpicture}[scale=0.4]
  \node at (0,0) {};
  \node at (1, 2.5) {$\varphi$};
   \node at (1, 0.5) {$\varphi^{-1}$};
  \draw[->] (2,2)--(0,2);
  \draw[->] (0,1)--(2,1);
\end{tikzpicture}\quad\quad  \begin{tikzpicture}[scale=0.4]
  \node at (1.5, 0.5) {$\bigtimes$};
  \node at (1.5, 1.5) {$\bigtimes$};
  \node at (1.5, 2.5) {$\bigtimes$};
  \node at (3.5, 0.5) {$\bigtimes$};
  \node at (3.5, 1.5) {$\bigtimes$};
  \node at (3.5, 2.5) {$\bigtimes$};
  \node at (4.5, 0.5) {$\bigtimes$};
  \node at (4.5, 1.5) {$\bigtimes$};
  \draw (0,4)--(0,0)--(6,0);
  \draw (1,0)--(1,3)--(2,3)--(2,0);
  \draw (1,1)--(2,1);
  \draw (1,2)--(2,2);
  \draw (3,0)--(3,3)--(4,3)--(4,0);
  \draw (3,1)--(4,1);
  \draw (3,2)--(4,2);
  \draw (4,2)--(5,2)--(5,0);
  \draw (4,1)--(5,1);
  \node at (0.5, 0.5) {$\bigtimes$};
  \node at (0.5, 1.5) {$\bigtimes$};
  \draw (0,2)--(1,2);
  \draw (0,1)--(1,1);
  \node at (2.5, 0.5) {$\bigtimes$};
  \node at (2.5, 1.5) {$\bigtimes$};
  \draw (2,2)--(3,2);
  \draw (2,1)--(3,1);
\end{tikzpicture}$$
        \caption{$\varphi$ and $\varphi^{-1}$ for $Min(Ch^1_5)$ (left) and $Min(Ch^2_5)$ (right)}
        \label{fig:bijection}
    \end{figure}
\end{example}

\section{Conclusion}

In this article, we initiated an investigation into the posets naturally associated with collections of Kohnert diagrams introduced in \cite{KP2}. In particular, we focused on characterizing when such posets are bounded and/or ranked. While we were able to establish some general results (see Proposition~\ref{prop:nummingen} and Theorems~\ref{thm:ranked1} and~\ref{thm:ranked2}) as well as results for certain special families of diagrams (see Theorems~\ref{thm:obpcmin},~\ref{thm:obpcrank},~\ref{thm:2rmin},~\ref{thm:2rrank},~\ref{thm:uniqminkey},~\ref{thm:keyrank},~\ref{thm:checkrank}, and~\ref{thm:checkbound}), a complete characterization seems to be much more complex. In fact, we suspect that a general characterization of boundedness may not exist for Kohnert posets. However, we believe that a general result concerning rankedness does exist. Indeed, it seems as though the necessary conditions provided by Theorems~\ref{thm:ranked1} and~\ref{thm:ranked2}, or perhaps slight generalizations, are also sufficient; that is, we conjecture the following.
\begin{conj}
    There exists a finite number of special families of subdiagrams $\mathcal{F}$ such that given any diagram $D_0$, $\mathcal{P}(D_0)$ is ranked if and only if there is no $D\in KD(D_0)$ such that $D$ contains a subdiagram from $\mathcal{F}$. 
\end{conj}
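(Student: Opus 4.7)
The plan is to prove the contrapositive: if $\mathcal{P}(D_0)$ is not ranked, then some $D \in KD(D_0)$ contains a subdiagram belonging to a finite list $\mathcal{F}$ of pattern families. The hope is that $\mathcal{F}$ consists essentially of the two families already identified in Theorems~\ref{thm:ranked1} and~\ref{thm:ranked2}, possibly augmented by a few additional configurations needed to cover cases where the two chains of different lengths involve three or more columns interacting.

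The first step is to reduce to a \emph{minimal non-ranked interval}. Among all pairs $D_{min} \preceq D_{max}$ in $\mathcal{P}(D_0)$ such that $[D_{min},D_{max}]$ contains two maximal chains of different lengths, I would choose one minimizing $rowsum(D_{max}) - rowsum(D_{min})$. By minimality, every proper sub-interval is ranked, which is a very strong structural constraint. Using this, I would argue that the two chains must differ in length by exactly one: if they differed by two or more, an averaging or sub-chain argument through an intermediate element should yield a strictly smaller non-ranked sub-interval. I would further push this minimality to bound the length of the shorter chain, aiming ideally for a minimal interval whose chains have lengths $2$ and $3$.

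The second step is a local analysis of such a minimal interval. The two chains must depart from $D_{max}$ via distinct covering relations that correspond to distinct Kohnert moves, each moving the rightmost cell of some row down to a specific column. Lemma~\ref{lem:rankhelp} then restricts how the cell configuration in the affected columns can vary across the interval, forcing the disagreement between chains to live in a bounded geometric window. A careful case analysis over the relative positions of the two covering Kohnert moves at $D_{max}$, together with the ``detour'' cell involved in lengthening one chain, should reveal that $D_{max}$ necessarily contains a subdiagram of one of the forms captured by Theorems~\ref{thm:ranked1} and~\ref{thm:ranked2}, modulo finitely many extra configurations arising when the detour passes through intermediate cells that serve as blockers.

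The main obstacle will be the claim that the minimal non-ranked interval has bounded chain length. A priori, the two chains could weave through a complicated portion of $\mathcal{P}(D_0)$ involving many cells moving across many rows, and if no uniform length bound exists, then $\mathcal{F}$ would need to grow with chain length and the conjecture would fail. The argument will likely need to exploit the fact that Kohnert moves act independently on columns, together with Lemma~\ref{lem:rankhelp}, to perform a ``compression'' step: from any hypothetical long non-ranked interval, extract a sub-configuration that is itself non-ranked and occupies a strictly smaller range of rows and columns. Producing this compression is the crux of the proof, and will likely require a finer invariant than $rowsum$, perhaps one that tracks both the horizontal and the vertical extent of the disagreement between chains, so that decreasing this invariant corresponds to genuinely shrinking the ``support'' of the rank failure rather than merely moving it around.
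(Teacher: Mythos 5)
The statement you are trying to prove is stated in the paper as an open conjecture: the authors give no proof, and in fact immediately after stating it they identify the main obstruction themselves (no candidate rank function is known in general, since $rowsum(D)-b(D_0)$ already fails for $D_0=\{(2,1),(3,1),(1,2),(2,2)\}$). So there is no proof in the paper to compare against, and your proposal does not close the gap either: it is a strategy outline whose decisive step you explicitly leave unresolved. The ``compression'' claim --- that a minimal non-ranked interval can always be shrunk to one whose disagreement lives in a window of bounded size, independent of $D_0$ --- is not a technical lemma to be supplied later; it is essentially the entire content of the conjecture. Without it you have no uniform bound on the complexity of the witnessing configuration, and hence no finite list $\mathcal{F}$. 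Nothing in Lemma~\ref{lem:rankhelp} or in the column-locality of Kohnert moves obviously yields such a bound: the obstructions in Theorems~\ref{thm:ranked1} and~\ref{thm:ranked2} already involve conditions spanning arbitrarily many rows and columns (e.g., ``every row strictly between $r$ and $r'$ has a cell to the right of column $c$''), so even formulating what counts as a single ``family'' so that finitely many suffice is part of the difficulty.

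Two further steps of your plan are also unjustified as written. First, the reduction ``if the two maximal chains of a minimal non-ranked interval differed in length by two or more, an averaging argument would produce a smaller non-ranked sub-interval'' is asserted, not argued; note also that your initial reduction from ``$\mathcal{P}(D_0)$ is not ranked'' to ``some interval has two maximal chains of unequal length'' is false for general finite posets and needs the fact that Kohnert posets have a unique maximal element (every saturated chain from $x$ to $D_0$ is a maximal chain of $[x,D_0]$, so one can define $\rho(x)$ via the common chain length when all such intervals are graded). Second, the conjecture is an ``if and only if'': your contrapositive plan only addresses the direction (not ranked $\Rightarrow$ pattern occurs). For the converse you would need every family placed into $\mathcal{F}$ --- including the ``finitely many extra configurations'' you propose to add during the case analysis --- to provably force non-rankedness, in the style of Theorems~\ref{thm:ranked1} and~\ref{thm:ranked2}; this obligation is not mentioned in your plan.
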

\noindent
The main obstacle to establishing such a result is determining a rank function since the common choice of $rowsum(D)-b(D_0)$ used throughout this article does not work in general, e.g., $D_0=\{(2,1),(3,1),(1,2),(2,2)\}$.

This article represents the first attempt to study the ``not-so-well-behaved'' structure of Kohnert posets in general. Therefore, there are many other interesting questions remaining. In particular, the third author (N. M.) is studying the shellability of Kohnert posets~\cite{NNC23}, and the fourth author (E. P.) is working on identifying those Kohnert posets that are lattices~\cite{E24}.


\printbibliography

\end{document}